\documentclass[reqno, 12pt]{amsart}

\usepackage{amsmath}
\usepackage{amsfonts}
\usepackage{amssymb}
\usepackage{mathrsfs}
\usepackage{amsthm}
\usepackage{mathtools}
\usepackage{bbm}
\usepackage[bbgreekl]{mathbbol}
\usepackage{graphicx, color}
\usepackage{tikz}
\usetikzlibrary{cd, intersections, calc, decorations.pathmorphing, arrows, decorations.pathreplacing}
\usepackage[all]{xy}

\DeclareSymbolFontAlphabet{\mathbb}{AMSb}
\DeclareSymbolFontAlphabet{\mathbbl}{bbold}

\usepackage{version}

\usepackage{stackengine}
\usepackage{calc}
\newlength\shlength
\newcommand\xshlongvec[2][0]{\setlength\shlength{#1pt}%
  \stackengine{-5pt}{$#2$}{\smash{$\kern\shlength%
    \stackengine{7.1pt}{$\mathchar"017E$}%
      {\rule{\widthof{$#2$}}{.57pt}\kern.4pt}{O}{r}{F}{F}{L}\kern-\shlength$}}%
      {O}{c}{F}{T}{S}}

\definecolor{refkey}{rgb}{0.9451,0.2706,0.4941}
\definecolor{labelkey}{rgb}{0.9451,0.2706,0.4941}

\usepackage{subfiles}
\usepackage[colorlinks, linkcolor=blue, citecolor=red, urlcolor=cyan,pagebackref]{hyperref} 

\numberwithin{equation}{section}

\usepackage{thmtools}
\usepackage{cleveref}

\declaretheorem[name=Theorem,numberwithin=section]{thm}
\declaretheorem[sharenumber=thm,name=Corollary]{cor}
\declaretheorem[sharenumber=thm,name=Lemma]{lem}
\declaretheorem[sharenumber=thm,name=Proposition]{prop}

\declaretheorem[name=Theorem]{introthm}

\theoremstyle{definition}

\declaretheorem[sharenumber=thm,name=Definition]{dfn}
\declaretheorem[sharenumber=thm,name=Example]{ex}

\declaretheorem[sharenumber=thm,name=Remark]{rem}
\declaretheorem[sharenumber=thm,name=Notation]{conv}

\crefname{thm}{Theorem}{Theorems}
\crefname{cor}{Corollary}{Corollaries}
\crefname{lem}{Lemma}{Lemmas}
\crefname{prop}{Proposition}{Propositions}

\crefname{introthm}{Theorem}{Theorems}
\crefname{introconj}{Conjecture}{Conjectures}
\crefname{introcor}{Corollary}{Corollaries}

\crefname{dfn}{Definition}{Definitions}
\crefname{ex}{Example}{Examples}
\crefname{claim}{Claim}{Claims}
\crefname{conj}{Conjecture}{Conjectures}
\crefname{rem}{Remark}{Remarks}
\crefname{conv}{Notation}{Notations}
\crefname{assum}{Assumption}{Assumptions}
\crefname{lemdef}{Lemma-Definition}{Lemma-Definitions}

\crefname{figure}{Figure}{Figures}
\crefname{section}{Section}{Sections}
\crefname{subsection}{Section}{Sections}
\crefname{appendix}{Appendix}{Appendices}

\newcommand{\bZ}{\mathbb{Z}}
\newcommand{\bQ}{\mathbb{Q}}
\newcommand{\bR}{\mathbb{R}}

\newcommand{\bN}{\mathbb{N}}

\newcommand{\A}{\mathcal{A}}

\newcommand{\cF}{\mathcal{F}}

\newcommand{\cS}{\mathcal{S}}

\newcommand{\cV}{\mathcal{V}}

\newcommand{\X}{\mathcal{X}}

\newcommand{\tri}{\triangle}
\newcommand{\sgn}{\mathrm{sgn}}

\newcommand{\lc}[2]{#1|_{#2,{\log}}}
\newcommand{\inprod}[2]{\langle #1, #2 \rangle}

\newcommand{\bs}{{\boldsymbol{s}}}
\newcommand{\bi}{{\mathbf{i}}}

\newcommand{\CV}{\mathsf{CV}}
\newcommand{\CA}{\mathscr{A}}
\newcommand{\UCA}{\mathscr{U}}

\newcommand{\pos}{\mathbb{R}_{>0}}
\newcommand{\trop}{\mathbb{R}^{\mathsf{T}}}

\newcommand{\bExch}{\mathsf{Exch}}
\newcommand{\uf}{\mathrm{uf}}

\newcommand{\Teich}{Teichm\"uller}

\newcommand{\ve}{\varepsilon}

\makeatletter
\newcommand{\oset}[3][0ex]{%
  \mathrel{\mathop{#3}\limits^{
    \vbox to#1{\kern-2\ex@
    \hbox{$\scriptstyle#2$}\vss}}}}
\makeatother
\newcommand{\overbar}[1]{\oset{#1}{-\!\!\!-\!\!\!-}}

\tikzset{
  qarrow/.style={->,shorten >=2pt,shorten <=2pt,>=latex
      },
}
\tikzset{
  head/.style={shorten >=4pt
      },
}
\tikzset{
  tail/.style={shorten <=4pt
      },
}
\newcommand{\qarrow}[2]{\draw[qarrow](#1) -- (#2)}

\tikzset{
  mid arrow/.style={postaction={decorate,decoration={
        markings,
        mark=at position .5 with {\arrow[#1]{stealth}}
      }}},
}

\tikzset{pics/.cd,
handle/.style={code={
\draw (-0.72,0) to[bend left] (0.72,0);
\draw (-0.9,0.1) to[bend right] (0.9,0.1);
}}}

\title[Fixed point theorem for cluster modular groups]
{Fixed point theorem for cluster modular groups}

\author{Tsukasa Ishibashi}
\address{Tsukasa Ishibashi, Mathematical Institute, Tohoku University, 
6-3 Aoba, Aramaki, Aoba-ku, Sendai, Miyagi 980-8578, Japan.}
\email{tsukasa.ishibashi.a6@tohoku.ac.jp}
\urladdr{https://sites.google.com/view/tsukasa-ishibashi/home} 

\date{\today}

\begin{document}

\begin{abstract}
We prove that any finite subgroup $G \subset \Gamma_{\boldsymbol{s}}$ of the cluster modular group has fixed points in the cluster manifolds $\mathcal{A}_{\boldsymbol{s}}(\mathbb{R}_{>0})$ and $\mathcal{X}_{\boldsymbol{s}}(\mathbb{R}_{>0})$ under a certain condition. This generalizes Kerckhoff's Nielsen realization theorem \cite{Kerckhoff} for the mapping class group action on the Teichm\"uller space. The condition holds whenever $\Gamma_{\boldsymbol{s}}$ admits a cluster DT transformation, and it can be also verified for all finite mutation types except for $X_7$. 
Our proof closely follows Kerckhoff's argument, based on the convexity of log-cluster variables.

\end{abstract}

\maketitle

\tableofcontents

\section{Introduction}
We say that a group $G$ acting on a set $X$ \emph{has a fixed point} if there is a point $x \in X$ satisfying $\gamma(x)=x$ for all $\gamma \in G$. 

For a compact oriented surface $\Sigma$ with negative Euler characteristic, the \emph{Nielsen realization problem}, named after J.~Nielsen, asks whether every finite subgroup $G$ of the mapping class group of $\Sigma$ has a fixed point in the \Teich\ space of $\Sigma$. Nielsen \cite{Nielsen} solved the problem for cyclic subgroups, equivalently for individual finite order elements.
The problem is solved affirmatively by Kerckhoff \cite{Kerckhoff}, based on the convexity of length functions along earthquake paths. 

In this paper, we aim to generalize this ``Nielsen realization theorem'' to \emph{cluster modular groups} \cite{FG09} (also known as \emph{cluster mapping class groups}). 

The cluster modular group $\Gamma_\bs$ is the automorphism group of the cluster structure -- cluster algebra \cite{FZ-CA1} or cluster variety \cite{FG09} -- defined by a mutation class $\bs$ of seeds. In particular, it acts on the \emph{cluster manifolds} $\A_\bs(\pos)$ and $\X_\bs(\pos)$, which are contractible real-analytic manifolds. There is a $\Gamma_\bs$-equivariant map $p: \A_\bs(\pos) \to \X_\bs(\pos)$, which we call the \emph{ensemble map}. 
In general, $p$ is a fiber bundle over its (possibly proper) image. 

For any marked surface $\Sigma$, we have an associated mutation class $\bs_\Sigma$ \cite{FST,FG07}, and the corresponding cluster-theoretic objects are related to \Teich\ theory as summarized in Table \ref{tab:cluster_Teich}. In this case, $p(\A_{\bs_\Sigma}(\pos))$ is identified with the usual \Teich\ space on which Kerckhoff worked. 

\begin{table}[ht]
    \centering
    \begin{tabular}{c|c}
    Cluster theory & \Teich\ theory \\ \hline
    $\A_{\bs_\Sigma}(\pos)$     & The decorated \Teich\ space \cite{Penner}  \\
    $\X_{\bs_\Sigma}(\pos)$     & The enhanced \Teich\ space \cite{Fock,FG07}  \\
    $\Gamma_{\bs_\Sigma}$   & The tagged mapping class group \cite{FST,BS15}
    \end{tabular}
    \label{tab:cluster_Teich}
\end{table}

We are going to consider if any finite subgroup $G \subset \Gamma_\bs$ has a fixed point in $\A_\bs(\pos)$ or $\X_\bs(\pos)$, for a general mutation class $\bs$. 

\subsection{Nielsen realization theorem}
The following is our main theorem:

\begin{introthm}[\cref{thm:fixed_point_filling}]\label{introthm:fixed_point_filling}
Assume that there exists a filling set $\Lambda \subset \UCA^+_\bs$ (\cref{def:filling_set}). 
Then, any finite subgroup $G \subset \Gamma_\bs$ has a fixed point in $\A_\bs(\pos)$. In particular, it also has a fixed point in $p(\A_\bs(\pos)) \subset \X_\bs(\pos)$.
\end{introthm}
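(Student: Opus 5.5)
Following Kerckhoff's strategy, we construct a $G$-invariant function on $\A_\bs(\pos)$ that is proper and strictly convex along suitable paths, and take its unique minimum. Let $\Lambda \subset \UCA^+_\bs$ be the filling set. Replacing $\Lambda$ by $G\cdot\Lambda$ (a finite union of translates, which should still be filling, since filling is presumably a condition that only improves when elements are added), we may assume $\Lambda$ is $G$-invariant. Define
\[
F := \sum_{f \in \Lambda} \log f \colon \A_\bs(\pos) \to \bR.
\]
Since $\Gamma_\bs$ acts on $\UCA^+_\bs$ by permuting functions, $G$ permutes $\Lambda$, so $F$ is $G$-invariant.

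\textbf{Step 1 (convexity).} In any cluster chart $\A_\bs(\pos)\cong \bR^N$ given by log-cluster coordinates $a_i=\log A_i$, each $f\in\UCA^+_\bs$ is a Laurent polynomial with nonnegative coefficients in the $A_i$. Thus $\log f$ is a log-sum-exp of affine functions of $(a_i)$, hence convex along straight lines in log-coordinates. This is the cluster analogue of convexity of length along earthquake paths. Taking $F$ as a sum, it is convex in every chart. For strict convexity we use the filling property: along a line $t\mapsto a+tv$ with $v\neq 0$, $\log f$ is affine exactly when all exponent vectors appearing in $f$ have the same pairing with $v$. We expect the definition of filling to rule out this happening simultaneously for all $f\in\Lambda$.

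\textbf{Step 2 (properness).} We show $F\to\infty$ at infinity. A sequence leaving every compact set has, after passing to a subsequence, log-coordinates $a/|a|$ converging to some direction $v$ in the tropical space $\A_\bs(\trop)$. Then $\log f(a)/|a|\to f^{\trop}(v)$, so properness should follow from the filling condition stated as: for every nonzero tropical point $v$, some $f\in\Lambda$ has $f^{\trop}(v)>0$. I expect \cref{def:filling_set} to be essentially this condition. Its lower-bound counterpart (no $f^{\trop}$ negative everywhere) is automatic once the constant terms or positivity are taken into account. Here one must be careful that a single chart suffices, since $\A_\bs(\pos)$ is globally one chart.

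\textbf{Step 3 (minimizer).} $F$ is proper and bounded below, so it attains its minimum. A strictly convex function in a fixed global chart has a unique minimizer $x_0$, and $G$-invariance forces $\gamma(x_0)=x_0$ for all $\gamma\in G$. The point $p(x_0)$ is then fixed in $p(\A_\bs(\pos))$ by equivariance of $p$.

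\textbf{Main obstacle.} Step 2 is the hard part. Passing from the filling condition to properness requires controlling tropical limits uniformly, that is, making sure the asymptotic estimate $\log f \approx f^{\trop}$ is uniform on the unit sphere of directions. Strict convexity in Step 1 could alternatively be avoided: the minimizing set is compact, convex (in a fixed chart) and $G$-invariant. One could then try to take a barycenter, though that requires $G$ to act affinely, which fails in general. For this reason I would insist on strict convexity.
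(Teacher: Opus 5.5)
\textbf{The gap: summing the logarithms.} Your aggregate function $F=\sum_{f\in\Lambda}\log f$ breaks both Step 1 and Step 2. In \cref{def:filling_set}, a filling set is one that is \emph{balanced} and satisfies \emph{slope-span}. Slope-span demands $\mathrm{Slope}(f|_\bi)\subset(\ker\ve^\bi)^\perp$ for every $f\in\Lambda$. By \cref{lem:strict_convex}, this says every $\log f$ is \emph{affine} along each line in a direction of $\ker\ve^\bi$, i.e.\ along the fibers of $p$. So the definition does the opposite of what you hoped for in Step 1. Whenever $\ker\ve^\bi\neq 0$, your $F$ is affine on every fiber line, hence neither strictly convex nor proper (its sublevel sets contain half-lines).

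For the filling set of \cref{prop:once-punctured}, already with $G$ trivial, $F$ is even unbounded below. Along $t(1,\dots,1)$, each of the $2n$ cluster variables in $\Lambda$ has slope $+1$ and $W$ has slope $-1$, so $F$ changes by $(2n-1)t\to-\infty$ as $t\to-\infty$. The tropical heuristic in Step 2 fails for the same reason. You get $\frac{1}{s}F(sv)\to\sum_f f^{\mathrm{trop}}(v)$, and one positive summand says nothing about the sum, because $f^{\mathrm{trop}}$ takes negative values (for $f=A_j$ it is just $v_j$). So your claim that the lower bound is automatic is false. This is not only a kernel issue: even when $\ker\ve^\bi=0$, adjoining enough powers $A_1^2,\dots,A_1^k$ to a filling set keeps it filling but makes your sum unbounded below.

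\textbf{What is right, and how to repair it.} Your tropical criterion ``for every $v\neq0$ some $f^{\mathrm{trop}}(v)>0$'' is exactly the balanced condition. But it controls $\max_f f^{\mathrm{trop}}$, not the sum, so the aggregate must behave like a maximum.

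The paper uses $L_G=\max_{F\in\Lambda_G}\log F$, which is proper by \cref{lem:proper_convex}. There is no uniformity issue: since the coefficients are positive integers, $\log f(x)\ge\langle\alpha,x\rangle$ for every $\alpha\in\mathrm{Supp}(f|_\bi)$. So sublevel sets lie in the polytope $\{x:\langle\alpha,x\rangle\le R \text{ for all } \alpha\in S\}$, which is bounded by balancedness. The paper then needs the separate uniqueness argument of \cref{thm:minimum_unique_fiber}, because a maximum need not be strictly convex even if one of its terms is. Indeed, adjoining a large integer constant $N\in\UCA^+_\bs$ to $\Lambda$ keeps it filling, yet $\max(L_G,\log N)$ then has a minimizing set with nonempty interior.

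The cleanest repair of your argument is to take $\log H$ with $H:=\sum_{f\in G\cdot\Lambda}f\in\UCA^+_\bs$.
\begin{enumerate}
\item $H$ is $G$-invariant.
\item Positivity prevents cancellation, so $\mathrm{Supp}(H|_\bi)\supset S$.
\item Hence $\mathrm{Slope}(H|_\bi)\supset\mathrm{span}\{\alpha-\alpha' : \alpha,\alpha'\in S\}=\bR^n$, because $\mathrm{Conv}(S)$ is full-dimensional.
\end{enumerate}
Thus $\log H$ is strictly convex in the log-chart by \cref{lem:strict_convex}, and proper by \cref{lem:proper_convex} with $r=1$. Your Step 3 then goes through verbatim, using only the balanced condition.
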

This result gives a cluster-theoretic generalization of the Nielsen realization theorem due to Kerckhoff. It also generalizes \cite[Proposition 2.3 (ii) $\Longrightarrow$ (iii)]{Ish19}, which was stated for finite order elements of $\Gamma_\bs$. See also \cite{dSG} and \cite{HLY} for related results for finite order elements, obtained by different techniques.

The filling set $\Lambda$ plays the role of a collection of simple closed geodesics that fills up the surface. Our proof of \cref{introthm:fixed_point_filling} closely follows that of Kerckhoff \cite{Kerckhoff}: the $G$-orbit of $\Lambda$ produces a real-analytic $G$-invariant function $L_G$ on $\A_\bs(\pos)$ with certain convexity properties. We prove that it has a unique minimizer in $\A_\bs(\pos)$, which gives a $G$-fixed point. Its image under $p$ gives a $G$-fixed point in $\X_\bs(\pos)$ as well. 

The remaining problem is how to find a filling set. 
The following gives vast class of examples to which we can apply \cref{introthm:fixed_point_filling}:

\begin{introthm}[\cref{cor:fixed_point_DT}]
Assume that $\Gamma_\bs$ admits a cluster Donaldson--Thomas transformation. Then, any finite subgroup $G \subset \Gamma_\bs$ has fixed poins in $\A_\bs(\pos)$ and $\X_\bs(\pos)$.
\end{introthm}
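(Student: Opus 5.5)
The plan is to reduce the statement to \cref{introthm:fixed_point_filling}: I will produce a filling set $\Lambda \subset \UCA^+_\bs$ whenever $\Gamma_\bs$ admits a cluster DT transformation. The fixed point in $\X_\bs(\pos)$ then comes for free, since \cref{introthm:fixed_point_filling} gives a $G$-fixed point $a \in \A_\bs(\pos)$ and $p$ is $\Gamma_\bs$-equivariant, so $p(a)$ is $G$-fixed.

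To build $\Lambda$, recall that the existence of a cluster DT transformation means that, in the Fock--Goncharov duality picture (Gross--Hacking--Keel--Kontsevich), the theta functions give a positive basis of $\UCA_\bs$. This basis is parametrized by the tropical points of the dual variety, and it contains all cluster monomials. The filling condition in \cref{def:filling_set} should be a properness statement: the function
\[
L_\Lambda=\sum_{f\in\Lambda}\log f
\]
(or a max/sum variant) should tend to $+\infty$ at every end of $\A_\bs(\pos)$. Equivalently, for every nonzero tropical direction $w$ in the tropicalization $\A_\bs(\trop)$, some $f\in\Lambda$ has positive tropical growth $f^{\mathsf T}(w)>0$. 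I take this to be the content of the definition.

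My candidate is $\Lambda$ consisting of the initial cluster variables $A_1,\dots,A_n$ together with the cluster variables of the DT-image seed. The DT transformation sends the initial chamber $C^+$ of the cluster complex to the opposite chamber $-C^+$, so these variables are the $g$-vector duals of $C^+$ and $-C^+$. The tropical versions of these $2n$ positive Laurent polynomials are piecewise-linear functions, and I expect $\max_i \max(A_i^{\mathsf T},\, (\mathrm{DT}^*A_i)^{\mathsf T})$ to be positive away from $0$.

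That positivity is the main obstacle. The point is that the DT image of $-w$ behaves like the opposite of $w$ in the initial chart: the tropical functions of the DT-image seed are essentially $(-\,\cdot)$-signed versions of the initial ones, via the identity
\[
g\text{-vectors of } \mathrm{DT}\text{-seed} = -(\text{initial } g\text{-vectors}),
\]
obtained from sign-coherence. So if all $A_i^{\mathsf T}(w)\le 0$, some DT-seed coordinate must be positive unless $w=0$.

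I would verify this by writing both tropical functions in the initial tropical chart and using the fact that the DT-seed has $c$-vectors $-e_i$. If this two-chamber argument needs frozen variables to handle degenerate directions, I would enlarge $\Lambda$ with finitely many extra theta functions.

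Finally, since $\UCA^+_\bs$ consists of universally positive elements and cluster variables are universally positive Laurent, $\Lambda\subset\UCA^+_\bs$. Applying \cref{introthm:fixed_point_filling} then concludes the proof.
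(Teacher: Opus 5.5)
Your reduction to \cref{introthm:fixed_point_filling} and your properness argument are sound and match the paper. From $C^{\bi^\ast;\bi}=-\mathrm{Id}$, tropical duality gives $\mathbf{g}(A_k^{\bi^\ast})^{\bi}=-\mathbf{e}_k$. Since the $F$-polynomial has constant term $1$, this gives $-\mathbf{e}_k\in\mathrm{Supp}(A_k^{\bi^\ast}|_\bi)$. Together with $\mathbf{e}_k\in\mathrm{Supp}(A_k^{\bi}|_\bi)$, this is exactly the balanced condition with $S=\{\pm\mathbf{e}_k\}$; your ``positive tropical growth in every nonzero direction'' is an equivalent reformulation.

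The gap is that a filling set (\cref{def:filling_set}) is not only a properness condition. It also requires the slope-span condition: $\mathrm{Slope}(F|_\bi)\subset(\ker\ve^\bi)^\perp$ for all $F\in\Lambda$, with these slopes spanning $(\ker\ve^\bi)^\perp$. Properness only gives existence of minimizers of $L_G$, and $G$ merely permutes the set of minimizers. Because $G$ does not act affinely in a log-chart, convexity of that set does not let you average to a fixed point. What the argument needs is that the minimizer is unique (\cref{thm:minimum_unique_fiber}). The slope-span condition is precisely what forces two minimizers to differ by a vector in $\ker\ve^\bi$ (Step 1), after which the fiber direction is handled by linearity of $L_F$ along $K^\vee$ (Step 2). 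You never identify or check this condition for your $\Lambda=\mathbf{A}^{\!\bi}\cup\mathbf{A}^{\!\bi^\ast}$. The initial variables are monomials in the $\bi$-chart and contribute zero slope, so everything would rest on an unproved claim that the terminal-seed slopes span the row space of $\ve^\bi$.

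The paper sidesteps this by also putting the clusters of the adjacent seeds $\bi_k=\mu_k(\bi)$ into $\Lambda$. By \eqref{eq:A-transf}, $A'_k|_\bi=A_k^{-1}\bigl(\prod_j A_j^{[\ve_{kj}]_+}+\prod_j A_j^{[-\ve_{kj}]_+}\bigr)$ has slope $\bR b_k$, where $b_k$ is the $k$-th row of $\ve^\bi$. Hence these slopes span $(\ker\ve^\bi)^\perp$. The inclusion $\mathrm{Slope}(A|_\bi)\subset(\ker\ve^\bi)^\perp$ for every cluster variable follows from the separation formula (\cref{lem:CV_Span}). Adding these $n$ clusters does not disturb the balanced condition, and it closes the gap. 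The appeal to theta bases and to possible extra theta functions is unnecessary.
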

Here, the \emph{cluster Donaldson--Thomas transformation} (\emph{cluster DT transformation} for short) is a special element of $\Gamma_\bs$, which is unique if exists. The mutation classes $\bs$ admitting a cluster DT transformation include:
\begin{enumerate}
    \item All the rank $\leq 2$ cases,
    \item Those of acyclic quivers \cite{BDP14}, 
    \item Those of $T_{\mathbf{n},\mathbf{w}}$-quivers studied in \cite{KG24}, which include all finite types, affine types, and doubly-extended Dynkin types. 
    \item Those associated with the moduli spaces of $G$-local systems on $\Sigma$ \cite{GS18,GS19}, except for the cases where $G=SL_2$ and $\Sigma$ is a once-punctured closed surface. 
    \item Those associated with the $K$-theoretic Coulomb branches of quiver gauge theories \cite{SS} whose gauge quivers have no loops,
    \item Those associated with any braid varieties \cite{CGGLSS}, 
\end{enumerate}
among others. See \cite{KellerDemonet} for a survey. 
Therefore, we have a generalization of the Nielsen realization theorem for these classes. 
For example, in the case (4), the manifold $\X_{\bs}(\pos)$ is the Fock--Goncharov higher \Teich\ space \cite{FG06}, and thus \cref{introthm:fixed_point_filling} gives the higher-rank analogue of Kerckhoff's Nielsen realization theorem. Even in the $SL_2$-case, our proof provides an independent proof of Kerckhoff's result, based on the convexity of log-cluster variables (logarithms of $\lambda$-length functions) rather than that of length functions along closed geodesics. 

We can also find filling sets for the mutation classes associated with once-punctured closed surfaces, which do not admit a cluster DT transformation (\cref{prop:once-punctured}). Together with the existence results of cluster DT transformations in the other cases, we obtain:

\begin{introthm}[\cref{thm:fixed_point_finite_mutation}]
Let $\bs$ be any mutation class of finite mutation type, except for type $X_7$. Then, any finite subgroup $G \subset \Gamma_\bs$ has fixed points in $\A_\bs(\pos)$ and $\X_\bs(\pos)$. 
\end{introthm}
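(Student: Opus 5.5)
The plan is to reduce the statement to \cref{introthm:fixed_point_filling}: for every mutation class $\bs$ of finite mutation type other than $X_7$, we exhibit a filling set $\Lambda \subset \UCA^+_\bs$. For this I would use the Felikson--Shapiro--Tumarkin classification of finite mutation type. Such a class is one of the following:
\begin{enumerate}
\item of rank $\leq 2$;
\item of the form $\bs_\Sigma$ for a marked surface $\Sigma$ (the adjacency quivers of triangulations);
\item one of the eleven exceptional classes $E_6,E_7,E_8$, $\tilde E_6,\tilde E_7,\tilde E_8$, $E_6^{(1,1)},E_7^{(1,1)},E_8^{(1,1)}$, $X_6$, $X_7$.
\end{enumerate}
Skew-symmetrizable finite mutation types should be handled the same way, via unfoldings or orbifold models.

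Cases (1) and (3) except $X_6,X_7$ are settled by the existence of a cluster DT transformation. In case (1) this is from the list above. The exceptional Dynkin, affine and elliptic (doubly-extended) types are finite, affine or doubly-extended Dynkin, hence covered by \cite{KG24} (or \cite{BDP14} in the acyclic cases). For $X_6$ I would check directly that a maximal green sequence exists, for instance by a computer search, which gives a cluster DT transformation. In all these cases we then invoke \cref{cor:fixed_point_DT}, whose proof presumably constructs a filling set (e.g.\ from theta functions or the DT orbit of the initial cluster variables). I expect this to be the easy part.

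For surface type $\bs_\Sigma$, the classes covered by \cite{GS18,GS19}, namely all $\Sigma$ except once-punctured closed surfaces, admit a cluster DT transformation (for $G=SL_2$). For degenerate small surfaces I would check coincidences with the rank $\leq 2$ or Dynkin cases. The remaining once-punctured closed surfaces are handled by \cref{prop:once-punctured}, which supplies a filling set directly, presumably built from the $\lambda$-lengths (tagged arcs) of a filling family of arcs together with their images under the tag change. The proof then combines these facts, and the only point needing care is bookkeeping. One must make sure that every member of the classification, including those with frozen vertices or coefficients if the paper allows them, falls into one of the listed cases. One must also make sure that the hypothesis of \cref{introthm:fixed_point_filling} is literally met, i.e.\ that the filling sets live in $\UCA^+_\bs$ for the right cluster modular group.

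The main obstacle I anticipate is the exceptional class $X_6$. It is neither acyclic nor obviously of $T_{\mathbf{n},\mathbf{w}}$ type. If no reference for a maximal green sequence is available, I would instead construct a filling set by hand. Concretely, I would take $\Lambda$ to be the cluster variables of the finitely many seeds up to the (finite-index) action of $\Gamma_\bs$, and verify the filling condition of \cref{def:filling_set}, i.e.\ properness of the sum of log-variables, using that the exchange graph modulo $\Gamma_\bs$ is finite. The failure for $X_7$, which the statement excludes, indicates that this direct argument genuinely depends on the type, so $X_6$ must be treated explicitly.
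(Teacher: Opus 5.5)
Your reduction matches the paper's: use the classification, apply \cref{cor:fixed_point_DT} wherever a cluster DT transformation exists, and apply \cref{prop:once-punctured} to once-punctured closed surfaces. For skew-symmetric classes your case split is sound but longer than needed. The paper quotes \cite[Theorem 7.4]{Mills}, which gives a cluster DT transformation for every skew-symmetric class of finite mutation type other than the once-punctured closed surfaces and $X_7$. So $X_6$ is already covered and no computer search is required. Drop your fallback for $X_6$ regardless. \cref{def:filling_set} asks for the balanced and slope-span conditions, not properness of a sum of log-variables. Properness only gives existence of a minimum, whereas the fixed point comes from uniqueness of the minimizer, which is where slope-span enters. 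Also, \cref{rem:X_7} shows that the cluster variables of finitely many seeds can violate the balanced condition.

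The genuine gap is the skew-symmetrizable case, which you dispose of with ``should be handled the same way, via unfoldings or orbifold models''. That is not an argument. \cref{cor:fixed_point_DT} needs a cluster DT transformation for the skew-symmetrizable class $\bs$ itself, and an unfolding does not by itself supply one. A reddening sequence of a skew-symmetric unfolding descends to the folded matrix only if it consists of orbit mutations compatible with the folding, and a general reddening sequence of the unfolded quiver need not. Similarly, ``orbifold models'' would require a separate existence result for DT transformations (or an explicit filling set), which you do not give. The paper closes this case with a concrete citation. By \cite[Theorem 5.1 and Fig.~13]{KG24}, every class listed in \cite[Theorem 5.13]{FeST-2} is mutation-equivalent to a $T_{\mathbf{n},\mathbf{w}}$-quiver. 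These admit cluster DT transformations by \cite[Theorem 4.14]{KG24}, so \cref{cor:fixed_point_DT} applies.
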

For the remaining case $X_7$, see \cref{rem:X_7}.

\subsection{Nielsen realization theorem for cluster realizations of Weyl groups}
We provide another approach to the Nielsen realization problem for the Weyl group actions constructed in \cite{IIO}, which may be of independent interest.
For any skew-symmetrizable Kac--Moody Lie algebra $\mathfrak{g}$ and an integer $m \geq 2$, one can construct a quiver $Q_m(\mathfrak{g})$ such that 
we have a group embedding 
\begin{align*}
    \phi_m: W(\mathfrak{g}) \to \Gamma_{\bs_m(\mathfrak{g})}
\end{align*}
of the Weyl group $W(\mathfrak{g})$ into the cluster modular group associated with the 
mutation class $\bs_m(\mathfrak{g})$ containing $Q_m(\mathfrak{g})$. In this case, the $W(\mathfrak{g})$-action is trivial on $p(\A_{\bs_m(\mathfrak{g})})$, and it's ``purely'' along the fiber direction of the ensemble map (\cref{lem:action_peripheral}). In particular, the Nielsen realization theorem for $\X_{\bs_m(\mathfrak{g})}$ holds obviously.

These behaviors stand in sharp contrast to the original \Teich\ case. 
We prove the Nielsen realization theorem for $\A_{\bs_m(\mathfrak{g})}(\pos)$ by a completely different argument:

\begin{introthm}[\cref{thm:Weyl_fixed_point}]
Let $\mathfrak{g}$ be a skew-symmetrizable Kac--Moody Lie algebra, and $m \geq 2$ an integer. Then, for any finite subgroup $G \subset W(\mathfrak{g})$, $\phi_m(G)$ has a fixed point in $\A_{\bs_m(\mathfrak{g})}(\pos)$. 
\end{introthm}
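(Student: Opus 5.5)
The plan is to reduce to the finite-dimensional linear structure of the $W(\mathfrak{g})$-action on the fibers of the ensemble map, and then average. By \cref{lem:action_peripheral}, $\phi_m(W(\mathfrak{g}))$ acts trivially on $p(\A_{\bs_m(\mathfrak{g})}(\pos))$. Hence $G$ preserves each fiber $p^{-1}(x)$, and it suffices to find a $G$-fixed point in a single fiber. I would fix a convenient base point $x$, for instance the image of the point with all cluster coordinates equal to $1$ in the seed $Q_m(\mathfrak{g})$. The fiber of $p$ over a point of its image is an orbit of the torus $H_{\A}(\pos)$ acting by rescaling cluster variables along $\ker p^*$. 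So $p^{-1}(x)$ is a torsor for the vector space $V := \ker(p^*) \otimes \bR$, via $\log$-coordinates.

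Next I would show that $\phi_m(\gamma)$ acts on this fiber by an affine-linear map in log-coordinates. An element of $\Gamma_\bs$ is a mutation sequence followed by a seed isomorphism. Along a fiber, each cluster $\A$-coordinate in any seed is a monomial times a fixed function of $x$. Indeed, exchange relations $A_k A_k' = \prod A_i^{[b_{ik}]_+} + \prod A_i^{[-b_{ik}]_+}$ factor as $M\cdot(1+X_k)$, and $X_k$ is constant on the fiber. Hence each cluster coordinate of $\gamma(a)$ on the fiber is a monomial in the coordinates of $a$ times a positive constant depending only on $x$. Taking logs, $\gamma$ acts on $p^{-1}(x) \cong V$ by an affine map $v \mapsto \rho(\gamma)v + c_\gamma$. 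This gives an action of $G$ on $V$ by affine transformations, which is a genuine group action because $\phi_m$ is a homomorphism and the action on $\A$ is a group action.

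Finally, a finite group acting by affine maps on a real vector space has a fixed point: take any $v_0$ and its barycenter $\bar v = \frac{1}{|G|}\sum_{g\in G} g\cdot v_0$. Since affine maps preserve barycenters, $\bar v$ is $G$-fixed. Exponentiating gives a point of $p^{-1}(x) \subset \A_{\bs_m(\mathfrak{g})}(\pos)$ fixed by $\phi_m(G)$.

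The main obstacle is the second step: verifying cleanly that the fiber is a single torus orbit and that the action there is affine in log-coordinates. This needs care with the coefficient factors $1+X_k$, and with the fact that the fiber over a point of the image may not be the full $\ker p^*$-torsor if $p$ has disconnected fibers. I would handle it by working in the fixed seed $Q_m(\mathfrak{g})$, tracking the explicit mutation sequences realizing the Coxeter generators from \cite{IIO}, and checking that each composite returns a monomial map twisted by constants that are functions of the $\X$-coordinates only.
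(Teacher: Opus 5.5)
Your argument is correct, and it takes a genuinely different route from the paper's.

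\textbf{The paper's route.} It first treats finite type. Using the fiber flows $\cF_{t\beta_s}$ and the invertibility of the Cartan matrix, it produces a point where all potentials satisfy $P_s=1$, which is fixed by all of $\phi_m(W(\mathfrak{g}))$. For general $\mathfrak{g}$ it then invokes Tits' theorem: a finite subgroup of a Coxeter group is conjugate into a spherical parabolic subgroup $W_T$.

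\textbf{Your route.} You use only \cref{lem:action_peripheral} together with the affine structure of the fibers of $p$. The obstacle you flag is not really there:
\begin{itemize}
\item In a log-chart $\log\psi_\bi$, the fiber $p^{-1}(x)$ is the affine subspace $u_0+\ker\ve^\bi$. It is therefore a single $K^\vee$-orbit, and the worry about disconnected fibers does not arise.
\item Any $\gamma$ fixing $x$ acts on this fiber by $\beta\mapsto\rho(\gamma)\beta+c_\gamma$, where $\gamma(g_0)=\cF_{c_\gamma}(g_0)$. This follows immediately from the $\Gamma_\bs$-equivariance of \eqref{eq:fiber_action}. It also follows from your own factorization of each exchange binomial as a monomial times $1+(p^\ast X_k)^{-1}$, applied along an arbitrary representation path: every intermediate $\X$-coordinate is a function of $x$ alone.
\item So you do not need the specific mutation sequences of \cite{IIO}.
\end{itemize}
Averaging an orbit then gives the fixed point.

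\textbf{What each approach buys.} Your proof is more elementary: it needs no Tits theorem, no potentials and no Cartan matrix. It is also more general. It shows that for any mutation class $\bs$, a finite subgroup $G\subset\Gamma_\bs$ with a fixed point $x\in p(\A_\bs(\pos))$ has a fixed point in $\A_\bs(\pos)$ lying over $x$. The paper's proof gives more explicit information. In finite type it describes the fixed locus as $\{P_s=1\}$, which is a $W(\mathfrak{g})$-equivariant section of $p$. The same viewpoint explains why no such section exists in affine type $A$ (\cref{rem:Weyl_section}).
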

We prove this theorem by constructing a $W(\mathfrak{g})$-equivariant section in the case of finite type (\cref{lem:Weyl_finite}), and then deduce the general case using Tits' theorem, which states that any finite subgroup of $W(\mathfrak{g})$ is contained in a spherical parabolic subgroup. We also note that the affine type $A$ case provides examples admitting no $W(\mathfrak{g})$-equivariant section of the ensemble map (\cref{rem:Weyl_section} (2)).

\subsection*{Organization of the paper}
In \cref{app:convex}, we review basic facts about convex functions that will be used throughout the paper. \cref{app:cluster} summarizes basic definitions and necessary results from cluster algebra. Our Nielsen realization theorems are proved in \cref{sec:fixed_point_general}. Additional details for the fixed point theorem for finite-order elements in \cite{Ish19} is provided in Appendix \ref{app:periodic}, which are independent of the body text. 

\subsection*{Acknowledgements}
The author is partially supported by JSPS KAKENHI Grant Number JP24K16914. He is grateful to Nariya Kawazumi for inspiring him to work on this problem and for valuable comments on the draft of this paper.

\section{Convex functions}\label{app:convex}
We review basic facts about convex functions. A concise account is found in \cite{FundamentalConvex}.

Recall that a function $f: \Omega \to \bR$ defined on a convex domain $\Omega \subset \bR^N$ is said to be \emph{convex} if it satisfies
\begin{align}\label{eq:convex}
    f(\lambda x + (1-\lambda) y) \leq \lambda f(x) + (1-\lambda) f(y)
\end{align}
for all $x,y \in \Omega$ and $0\leq \lambda \leq 1$. It is said to be \emph{strictly convex} if the strict inequality in \eqref{eq:convex} holds for all $x,y \in \Omega$ with $x \neq y$ and $0< \lambda < 1$. 

It is clear from the definition that $f:\Omega \to \bR$ is convex if and only if $f|_I: I \to \bR$ is convex for any embedded interval $I \subset \Omega$. 

\begin{lem}
A convex function $f:  (a,b) \to \bR$ satisfies
\begin{align}\label{eq:convex_inequality}
    \frac{f(t)-f(s)}{t-s} \leq \frac{f(u)-f(s)}{u-s} \leq \frac{f(u)-f(t)}{u-t}
\end{align}
for all $s < t < u$ in $(a,b)$.
\end{lem}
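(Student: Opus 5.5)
The plan is to write the middle point $t$ as a convex combination of the endpoints $s$ and $u$, and then read off both inequalities from the single convexity estimate \eqref{eq:convex}. Set
\begin{align*}
    \lambda := \frac{u-t}{u-s} \in (0,1), \qquad\text{so that}\qquad t = \lambda s + (1-\lambda) u, \quad 1-\lambda = \frac{t-s}{u-s}.
\end{align*}
Applying \eqref{eq:convex} with $x=s$, $y=u$ gives
\begin{align*}
    f(t) \leq \frac{u-t}{u-s} f(s) + \frac{t-s}{u-s} f(u).
\end{align*}
This is the only input we need.

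For the left inequality in \eqref{eq:convex_inequality}, we subtract $f(s)$ from both sides. Since $\frac{u-t}{u-s} - 1 = -\frac{t-s}{u-s}$, this yields
\begin{align*}
    f(t)-f(s) \leq \frac{t-s}{u-s}\bigl(f(u)-f(s)\bigr).
\end{align*}
Dividing by $t-s>0$ then gives $\frac{f(t)-f(s)}{t-s} \leq \frac{f(u)-f(s)}{u-s}$.

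For the right inequality, we instead subtract $f(u)$ from both sides, obtaining
\begin{align*}
    f(t)-f(u) \leq \frac{u-t}{u-s}\bigl(f(s)-f(u)\bigr).
\end{align*}
Multiplying by $-1$ and dividing by $u-t>0$ gives $\frac{f(u)-f(t)}{u-t} \geq \frac{f(u)-f(s)}{u-s}$, which is the desired inequality.

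There is no real obstacle here. The only points that need care are the sign bookkeeping when dividing by the positive quantities $t-s$, $u-t$, $u-s$, and the flip of the inequality in the second step.
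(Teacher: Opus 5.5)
Your proof is correct and takes essentially the same route as the paper: write $t$ as the convex combination of $s$ and $u$ with weight $\frac{u-t}{u-s}$ on $s$, then rearrange the single convexity inequality by subtracting $f(s)$ or $f(u)$. The paper obtains the left inequality through the relabeled combination $t=(1-\lambda)s+\lambda u$ with $\lambda=\frac{t-s}{u-s}$, but that is the same inequality, so your remark that one application of convexity suffices matches what the paper does.
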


\begin{proof}
Write $t=\lambda s + (1-\lambda) u$ for some $0<\lambda<1$. We have $\lambda = (u-t)/(u-s)$. Then by \eqref{eq:convex}, we get
\begin{align*}
    f(t) &\leq \frac{u-t}{u-s} f(s) + \bigg(1- \frac{u-t}{u-s}\bigg) f(u) \\ 
    &=  \frac{u-t}{u-s} (f(s)-f(u)) +  f(u).
\end{align*}
Therefore we get 
\begin{align*}
    \frac{f(u)-f(s)}{u-s} \leq \frac{f(u)-f(t)}{u-t}.
\end{align*}
Similarly, by writing $t = (1-\lambda)s+ \lambda u$ with $\lambda=(t-s)/(u-s)$, we get
\begin{align*}
    \frac{f(t)-f(s)}{t-s} \leq \frac{f(u)-f(s)}{u-s}.
\end{align*}
\end{proof}

\begin{cor}\label{cor:convex_continuous}
A convex function is continuous and directionally differentiable.
\end{cor}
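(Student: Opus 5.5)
The plan is to reduce to the one-variable case, since continuity and directional differentiability at a point of an open convex domain $\Omega \subset \bR^N$ only involve the restrictions of $f$ to lines through that point. So I fix $f:(a,b)\to\bR$ convex and $t\in(a,b)$, and study the difference quotient $q(s) := (f(s)-f(t))/(s-t)$ for $s \neq t$.

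\emph{Step 1 (monotone difference quotients).} Applying \eqref{eq:convex_inequality} to the three possible orderings of $t$ relative to two other points, I will show that $q$ is non-decreasing on $(a,b)\setminus\{t\}$. Concretely: for $s<s'<t$ use the first and third terms with the triple $(s,s',t)$; for $t<s<s'$ use the triple $(t,s,s')$; for $s<t<s'$ use the outer inequality with $(s,t,s')$. In particular, for any fixed $s_0<t<u_0$, the quotient $q(u)$ for $u\in(t,b)$ is bounded below by $q(s_0)$ and is non-increasing as $u\downarrow t$.

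\emph{Step 2 (one-sided derivatives).} By monotonicity and the lower bound, $f'_+(t)=\lim_{u\downarrow t} q(u)$ exists in $\bR$. Similarly $f'_-(t)=\lim_{s\uparrow t}q(s)$ exists, is bounded above by $q(u_0)$, and satisfies $f'_-(t)\le f'_+(t)$.

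\emph{Step 3 (continuity).} For $u \in (t,u_0]$, Step 1 gives $q(s_0)\le q(u)\le q(u_0)$, so $|f(u)-f(t)|\le C|u-t|$. The same bound holds on the left, so $f$ is locally Lipschitz and hence continuous.

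\emph{Step 4 (back to $\Omega$).} For a point $x\in\Omega$ and a vector $v$, the function $g(\tau)=f(x+\tau v)$ is convex on an open interval containing $0$, because $\Omega$ is open and convex. Step 2 applied to $g$ at $0$ gives the directional derivative $\lim_{\tau\downarrow0}(f(x+\tau v)-f(x))/\tau$.

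The step I expect to be the main obstacle is continuity in several variables, since continuity along every line does not by itself imply joint continuity. I would handle it as follows. Take a small simplex (or cube) around $x$ with vertices in $\Omega$. Convexity bounds $f$ above by the maximum $M$ of $f$ over its vertices. Then for $y=x+h$, write $x$ as a convex combination of $y$ and a point $x-h/\|h\|\cdot r$ in the region. This bounds $f$ below near $x$, so $f$ is bounded near $x$. Finally, the standard estimate $|f(x+h)-f(x)|\le (\|h\|/r)\,(M-f(x))$ is obtained by applying the one-variable inequality \eqref{eq:convex_inequality} along the segment through $x$ in direction $h$. This yields continuity, and in fact local Lipschitz continuity.
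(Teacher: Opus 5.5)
Your proof is correct and takes the paper's route: the paper also uses \eqref{eq:convex_inequality} to get monotone, bounded difference quotients on $(a,b)$, hence finite one-sided derivatives, reads off continuity from them, and obtains $Df_x(v)$ on $\bR^n$ by restricting to lines; your simplex argument for joint continuity in several variables is a valid addition that the paper does not supply. One small slip: in Step 1, for $s<s'<t$ the needed inequality $q(s)\le q(s')$ is the comparison of the second and third terms of \eqref{eq:convex_inequality} for the triple $(s,s',t)$, not the first and third.
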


\begin{proof}
The previous lemma tells us that both the limits 
\begin{align*}
    &\left.\frac{d}{dt}\right|_{t=s+0} f(t)= \lim_{t \to s+0} \frac{f(t)-f(s)}{t-s}, \\ 
    &\left.\frac{d}{dt}\right|_{t=u-0} f(t)=\lim_{t \to u-0} \frac{f(u)-f(t)}{u-t}
\end{align*}
exist for any $s,u \in (a,b)$. In particular, $\lim_{t\to s+0} f(t)=f(s)$ and $\lim_{t\to u-0} f(t)=f(u)$. 
Therefore $f(x)$ is continuous and directionally differentiable at any $x \in (a,b)$.
\end{proof}
For a convex function $f: \bR^n \to \bR$ and $x \in \bR^n$, denote the directional derivative by
\begin{align*}
    Df_x(v) := \lim_{t \to +0} \frac{f(x+tv)-f(x)}{t}
\end{align*}
for $v \in \bR^n$. 

\subsection{Midpoint convexity}

A function $f: (a,b) \to \bR$ is said to be \emph{midpoint-convex} if it satisfies
\begin{align*}
    f\bigg(\frac{x+y}{2}\bigg) \leq \frac{f(x)+f(y)}{2}
\end{align*}
for any $x,y \in (a,b)$. It is \emph{strictly midpoint-convex} if the strict inequality holds for all $x,y \in (a,b)$ with $x \neq y$.

\begin{lem}[rational convexity]\label{prop:convex_rational}
A midpoint-convex function $f: (a,b) \to \bR$ satisfies
\eqref{eq:convex} for all $x,y \in (a,b)$ and $\lambda \in [0,1]\cap\bQ$. 
If $f$ is strictly midpoint-convex, then the strict inequality holds in \eqref{eq:convex} for $x\neq y$ and $0 < \lambda <1$. 
\end{lem}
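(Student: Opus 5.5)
The plan is the classical Jensen argument: first prove the inequality for dyadic $\lambda$ by induction on the denominator, then pass to arbitrary rationals $\lambda=p/q$ by an averaging trick.

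\textbf{Step 1: dyadic weights.} We show that for every $n\geq 1$ and every finite family $x_1,\dots,x_{2^n}\in(a,b)$,
\begin{align*}
f\Big(\frac{1}{2^n}\sum_{i=1}^{2^n} x_i\Big)\leq \frac{1}{2^n}\sum_{i=1}^{2^n} f(x_i).
\end{align*}
The case $n=1$ is midpoint convexity. For the inductive step, split the family into two halves, apply midpoint convexity to the two half-averages, and then apply the induction hypothesis to each half. All the averages stay in $(a,b)$ by convexity of the interval.

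\textbf{Step 2: arbitrary rationals.} Let $m\geq 1$, let $x_1,\dots,x_m\in(a,b)$, and put $\bar x=\frac1m\sum x_i$. Choose $n$ with $2^n\geq m$ and pad the family with $2^n-m$ copies of $\bar x$. The padded family still has average $\bar x$, so Step 1 gives
\begin{align*}
2^n f(\bar x)\leq \sum_{i=1}^m f(x_i)+(2^n-m)f(\bar x),
\end{align*}
which is $f(\bar x)\leq \frac1m\sum f(x_i)$. Now take $\lambda=p/q\in[0,1]\cap\bQ$ and the family consisting of $p$ copies of $x$ and $q-p$ copies of $y$. This yields \eqref{eq:convex}.

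\textbf{Step 3: strict version.} Let $x\neq y$ and $0<\lambda=p/q<1$. We cannot simply carry strictness through the padding argument, because equality may hold in intermediate steps. Instead we use a reduction. Set $z=\lambda x+(1-\lambda)y$. Since $0<\lambda<1$, $z$ lies strictly between $x$ and $y$. Pick a small rational $\delta>0$ with $\lambda\pm\delta\in(0,1)$, and write
\begin{align*}
z=\tfrac12 z_1+\tfrac12 z_2,\qquad z_{1}=(\lambda+\delta)x+(1-\lambda-\delta)y,\quad z_{2}=(\lambda-\delta)x+(1-\lambda+\delta)y.
\end{align*}
Then $z_1\neq z_2$, so strict midpoint convexity gives $f(z)<\frac12(f(z_1)+f(z_2))$. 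Applying the non-strict rational inequality from Step 2 to $z_1$ and $z_2$ with rational weights $\lambda\pm\delta$ and averaging gives $\frac12(f(z_1)+f(z_2))\leq \lambda f(x)+(1-\lambda)f(y)$. Combining the two inequalities gives strict inequality.

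\textbf{Scope and main obstacle.} The statement's strict part is read for rational $\lambda$, consistent with the lemma's title. Extending to irrational $\lambda$ would need continuity, which is not assumed, so we do not attempt it. The main obstacle is Step 3, namely that strictness is not preserved by the padding argument; the midpoint splitting above is the workaround.
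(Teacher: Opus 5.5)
Your argument is correct. Steps 1 and 2 coincide with the paper's proof: dyadic induction, then padding with copies of $\bar x$, then $p$ copies of $x$ and $q-p$ copies of $y$. The difference is in the strict part.

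The paper handles it in one sentence, asserting that under strict midpoint convexity ``each inequality in the argument above becomes strict.'' Read literally this is not accurate. The dyadic induction can pair equal points or equal partial averages (for instance padded copies of $\bar x$), and there midpoint convexity gives only equality. To make that route rigorous one must check that the chain contains at least one strict step whenever the family is not constant. This is true: if the two half-averages coincide and each half is constant, the whole family is constant.

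Your Step 3 avoids this bookkeeping. You use the non-strict rational inequality as a black box and invoke strict midpoint convexity exactly once, at the splitting $z=\tfrac12 z_1+\tfrac12 z_2$ with $z_1-z_2=2\delta(x-y)\neq 0$.

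As for what each buys: the repaired version of the paper's route gives more, namely strict Jensen for arbitrary non-constant finite families with rational weights. Yours gives only the two-point statement, but that is exactly what the lemma asserts and what \cref{prop:midpoint_convex_extension} later uses.

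Your restriction of the strict claim to rational $\lambda$ is also the correct reading. Without continuity the inequality can fail for irrational $\lambda$: for example, $x^2$ plus a discontinuous additive function is strictly midpoint-convex but not convex.
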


\begin{proof}
We claim
\begin{align}\label{eq:convex_rational}
    f \bigg( \frac{x_1+\dots+x_k}{k} \bigg) \leq \frac{f(x_1)+\cdots+f(x_k)}{k}
\end{align}
for any integer $k \geq 1$ and $x_1,\dots,x_k \in (a,b)$. When $k=2^n$ for some $n$, it follows by an induction on $n$. When $2^{n-1}<k \leq 2^n$ for some $n$, let $\overline{x}:=(x_1+\dots+x_k)/k \in (a,b)$. Then
\begin{align*}
    f(\overline{x}) = f \bigg( \frac{x_1+\dots+x_k + (2^n-k)\overline{x}}{2^n} \bigg) \leq \frac{f(x_1)+\dots+f(x_k) + (2^n-k)f(\overline{x})}{2^n},
\end{align*}
where we used the established claim for $2^n$. It implies $f(\overline{x}) \leq (f(x_1)+\dots+f(x_k))/k$. Therefore the claim is proved. 

For any $\lambda=p/q \in [0,1]\cap\bQ$, we apply \eqref{eq:convex_rational} for $k=q$ and get
\begin{align*}
    f(\lambda x + (1-\lambda)y) = f \bigg(\frac{px+(q-p)y}{q} \bigg) \leq \frac{pf(x) + (q-p)f(y)}{q} = \lambda f(x) + (1-\lambda)f(y)
\end{align*}
for any $x,y \in (a,b)$.

If $f$ is strictly midpoint-convex, then each inequality in the argument above becomes strict. 
\end{proof}

\begin{prop}\label{prop:midpoint_convex_extension}
A continuous midpoint-convex function $f: (a,b) \to \bR$ is convex. 
If moreover $f$ is strictly midpoint-convex, then $f$ is strictly convex. 
\end{prop}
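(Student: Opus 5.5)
The plan is to derive convexity from \cref{prop:convex_rational} by a density argument, and then handle strictness separately, since strict inequalities do not survive limits.

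\emph{Convexity.} Fix $x,y \in (a,b)$ and $\lambda \in [0,1]$. Choose rationals $\lambda_n \in [0,1]\cap\bQ$ with $\lambda_n \to \lambda$. By \cref{prop:convex_rational},
\begin{align*}
    f(\lambda_n x + (1-\lambda_n)y) \leq \lambda_n f(x) + (1-\lambda_n) f(y).
\end{align*}
The points $\lambda_n x + (1-\lambda_n) y$ lie in the segment between $x$ and $y$, which is contained in $(a,b)$, and they converge to $\lambda x + (1-\lambda)y$. Continuity of $f$ lets us pass to the limit on the left-hand side, and the right-hand side converges trivially. This gives \eqref{eq:convex}, so $f$ is convex.

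\emph{Strict convexity.} This is the main obstacle. Now assume $f$ is strictly midpoint-convex, and let $x \neq y$ and $0<\lambda<1$. Put $z=\lambda x+(1-\lambda)y$. We may assume $\lambda \le 1/2$; the other case is symmetric after swapping $x$ and $y$. Then $z$ lies on the segment between $y$ and the midpoint $m=(x+y)/2$. Two cases arise.
\begin{itemize}
    \item If $\lambda = 1/2$, then $z=m$ and strict midpoint convexity gives the strict inequality directly.
    \item If $\lambda<1/2$, write $z = \mu m + (1-\mu) y$ with $\mu = 2\lambda \in (0,1)$. Convexity, already established, gives $f(z) \le \mu f(m) + (1-\mu) f(y)$. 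Strict midpoint convexity gives $f(m) < (f(x)+f(y))/2$. Since $\mu>0$, combining these yields
    \begin{align*}
        f(z) < \mu\frac{f(x)+f(y)}{2} + (1-\mu) f(y) = \lambda f(x) + (1-\lambda) f(y).
    \end{align*}
\end{itemize}
This proves strict convexity. The key point is to invoke strictness only once, at the midpoint, and to use the non-strict convexity from the first step for everything else.
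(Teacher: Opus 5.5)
Your proof is correct. The convexity half is the same as the paper's: rational convexity from \cref{prop:convex_rational}, then continuity. For strictness, however, you take a different route.

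The paper argues by contradiction. If equality held in \eqref{eq:convex} at an interior point $z$ of $[x,y]$, convexity would force $f$ to agree with an affine function on a subinterval. The paper writes ``constant'' here, but what is true is that $f$ is affine there. That contradicts the strict form of \cref{prop:convex_rational}.

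You argue directly instead. After reducing to $\lambda\le 1/2$, you write $z=\lambda x+(1-\lambda)y$ as $2\lambda\, m+(1-2\lambda)\,y$ with $m=(x+y)/2$. You then apply the already-established non-strict convexity once, and strict midpoint convexity once at $m$. The positive weight $2\lambda$ on $m$ keeps the inequality strict.

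Your version is shorter and self-contained. It needs neither the fact that a convex function agreeing with its chord at one interior point is affine on the whole segment (which the paper asserts without proof) nor the strict half of \cref{prop:convex_rational}. The paper justifies that strict half only tersely (``each inequality becomes strict''). In exchange, the paper's approach costs a single line once the strict rational inequality is accepted.
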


\begin{proof}
The first statement follows immediately from \cref{prop:convex_rational} and continuity. 
Suppose that $f$ is strictly midpoint-convex. If the convexity of $f$ were not strict, then there exists $x,y \in (a,b)$, say $x<y$, such that \eqref{eq:convex} becomes an equality for some $0<\lambda<1$. Let $z:=\lambda x + (1-\lambda) y$. Then $f$ is constant either on $[x,z]$ or $[z,y]$ by convexity, which contradicts to the strict inequality in \cref{prop:convex_rational}.
\end{proof}

\subsection{Convexity of Log-Laurent functions}
We are going to consider functions of the form 
\begin{align}\label{eq:log-poly}
    f(x_1,\dots,x_n):= \log F(e^{x_1},\dots,e^{x_n}),
\end{align}
where $F \in \bZ_+[X_1^{\pm 1},\dots,X_n^{\pm 1}]$ is a Laurent polynomial with positive integral coefficients. We will use the multi-index notations
\begin{align}\label{eq:multi_index}
    F(X)=\sum_{\alpha \in \bZ^n} c_\alpha X^\alpha, \quad f(x) = \log F(e^x) = \log \bigg( \sum_{\alpha \in \bZ^n} c_\alpha e^{\inprod{\alpha}{x}} \bigg),
\end{align}
where $X^\alpha:=\prod_{j=1}^n X_j^{\alpha_j}$, $e^x:=(e^{x_1},\dots,e^{x_n})$, $\inprod{\alpha}{x}:= \sum_{j=1}^n \alpha_j x_j$ for $X=(X_1,\dots,X_n)$, $x=(x_1,\dots,x_n)$ and $\alpha=(\alpha_1,\dots,\alpha_n)$. 

\begin{lem}\label{lem:log-poly_convex}
The function \eqref{eq:log-poly} is convex on $\bR^n$.
\end{lem}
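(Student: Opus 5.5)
We will prove that $f(x)=\log\sum_\alpha c_\alpha e^{\inprod{\alpha}{x}}$ is convex by Hölder's inequality, which yields the midpoint-convexity inequality directly. Since $F$ has positive coefficients and $e^{x_j}>0$, the sum $F(e^x)$ is a finite sum of positive terms. Hence $f$ is a well-defined real-analytic, in particular continuous, function on $\bR^n$. By the remark after \eqref{eq:convex}, it is enough to check convexity on intervals. By \cref{prop:midpoint_convex_extension}, continuity reduces this further to midpoint convexity. In fact the argument below gives the full inequality \eqref{eq:convex} for every $\lambda\in[0,1]$ with no extra effort.

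Fix $x,y\in\bR^n$ and $\lambda\in(0,1)$, and put $a_\alpha:=c_\alpha e^{\inprod{\alpha}{x}}>0$ and $b_\alpha:=c_\alpha e^{\inprod{\alpha}{y}}>0$. Linearity of $\inprod{\alpha}{\cdot}$ gives
\begin{align*}
c_\alpha e^{\inprod{\alpha}{\lambda x+(1-\lambda)y}} = a_\alpha^{\lambda}\, b_\alpha^{1-\lambda}.
\end{align*}
Hence
\begin{align*}
F(e^{\lambda x+(1-\lambda)y})=\sum_\alpha a_\alpha^\lambda b_\alpha^{1-\lambda}\le \Big(\sum_\alpha a_\alpha\Big)^{\lambda}\Big(\sum_\alpha b_\alpha\Big)^{1-\lambda}.
\end{align*}
The inequality is Hölder's inequality with conjugate exponents $p=1/\lambda$ and $q=1/(1-\lambda)$, applied to the sequences $a_\alpha^\lambda$ and $b_\alpha^{1-\lambda}$. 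Taking logarithms gives exactly $f(\lambda x+(1-\lambda)y)\le \lambda f(x)+(1-\lambda)f(y)$.

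The only step needing care is this Hölder inequality. If we want to avoid quoting it, we can prove the midpoint case $\lambda=1/2$ by Cauchy--Schwarz, since $\sum_\alpha\sqrt{a_\alpha b_\alpha}\le(\sum_\alpha a_\alpha)^{1/2}(\sum_\alpha b_\alpha)^{1/2}$. Combined with continuity and \cref{prop:midpoint_convex_extension}, this again gives convexity.

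An alternative route, useful later for strictness questions, is the Hessian. Write $w_\alpha(x)=c_\alpha e^{\inprod{\alpha}{x}}/F(e^x)$; these are probability weights. Then
\begin{align*}
\partial_i\partial_j f = \sum_\alpha w_\alpha\alpha_i\alpha_j-\Big(\sum_\alpha w_\alpha\alpha_i\Big)\Big(\sum_\alpha w_\alpha\alpha_j\Big),
\end{align*}
which is the covariance matrix of the random vector $\alpha$ under the weights $w_\alpha$. A covariance matrix is positive semidefinite, so $f$ is convex. This route is not needed for the lemma itself.
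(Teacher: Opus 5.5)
Your argument is correct and is essentially the paper's proof. The paper establishes only the midpoint inequality, via exactly your Cauchy--Schwarz fallback applied to $u_\alpha=\sqrt{a_\alpha}$, $v_\alpha=\sqrt{b_\alpha}$, and then concludes by continuity and \cref{prop:midpoint_convex_extension}. Your main H\"older version is the weighted form of the same termwise estimate: it gives \eqref{eq:convex} for every $\lambda$ directly, so the midpoint-to-full extension becomes unnecessary, and its equality case ($a_\alpha$ proportional to $b_\alpha$) serves the later strictness analysis in \cref{lem:strict_convex} equally well.
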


\begin{proof}
Since $f(x_1,\dots,x_n)$ is clearly continuous, it suffices to prove the mid-convexity by \cref{prop:midpoint_convex_extension}. The mid-convexity condition is equivalent to
\begin{align*}
    F(e^{(x+y)/2})^2 \leq F(e^x) \cdot F(e^y). 
\end{align*}
From \cref{eq:multi_index}, we have
\begin{align*}
    F(e^{(x+y)/2}) = \sum_{\alpha \in \bZ^n} c_\alpha e^{\inprod{\alpha}{x+y}/2}.
\end{align*}
By applying the Cauchy--Schwarz inequality
\begin{align*}
    \bigg(\sum_{\alpha \in \bZ^n} u_\alpha v_\alpha \bigg)^2 \leq \bigg(\sum_{\alpha \in \bZ^n} u_\alpha^2 \bigg) \cdot \bigg(\sum_{\alpha \in \bZ^n} v_\alpha^2 \bigg)
\end{align*}
on the normed space $L^2(\bZ^n)$ to $u_\alpha:=c_\alpha^{1/2}e^{\inprod{\alpha}{x}/2}$ and $v_\alpha:=c_\alpha^{1/2}e^{\inprod{\alpha}{y}/2}$, we get the desired inequality. 
\end{proof}
By the Cauchy--Schwarz proof, the equality in the convexity condition holds if and only if $e^{\inprod{\alpha}{x}/2} = c e^{\inprod{\alpha}{y}/2}$ for a constant $c \in \bR$. 
Based on this observation, let us investigate the condition for strict convexity.

\begin{dfn}
Given a Laurent polynomial $F \in \bZ[X_1^{\pm 1},\dots,X_n^{\pm 1}]$, its \emph{support} is defined to be
\begin{align*}
    \mathrm{Supp}(F):= \{ \alpha \in \bZ^n \mid c_\alpha \neq 0\}
\end{align*}
when we write $F=\sum_{\alpha \in \bZ^n} c_\alpha X^\alpha$. Define 
\begin{align*}
    \mathrm{Slope}(F):= \mathrm{span}_\bR\{ \alpha - \beta \mid \alpha,\beta \in \mathrm{Supp}(F)\}.
\end{align*}
\end{dfn}

\begin{prop}\label{lem:strict_convex}
The function \eqref{eq:log-poly} is strictly convex if $\mathrm{Slope}(F)=\bR^n$. More precisely, it is strictly convex along a straight line with directional vector $v \in \bR^n$ if and only if $v \notin \mathrm{Slope}(F)^\perp$.
\end{prop}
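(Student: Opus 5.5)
The plan is to restrict $f$ to a line $\ell(t)=x+tv$ and analyze the one-variable function $g(t):=f(x+tv)=\log \sum_{\alpha} c_\alpha e^{\inprod{\alpha}{x}} e^{t\inprod{\alpha}{v}}$. By \cref{lem:log-poly_convex}, $g$ is convex, so by \cref{prop:midpoint_convex_extension} strict convexity of $g$ is equivalent to strict midpoint-convexity. Hence it suffices to determine when the Cauchy--Schwarz inequality in the proof of \cref{lem:log-poly_convex} is an equality for two distinct points $y=x+sv$ and $z=x+uv$ on the line.

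For the direction ``$v\notin \mathrm{Slope}(F)^\perp$ implies strict'', I will use the equality case of Cauchy--Schwarz. Take $u_\alpha=c_\alpha^{1/2}e^{\inprod{\alpha}{y}/2}$ and $v_\alpha=c_\alpha^{1/2}e^{\inprod{\alpha}{z}/2}$ with $\alpha$ ranging over $\mathrm{Supp}(F)$. These vectors have strictly positive entries, so equality forces proportionality: $e^{\inprod{\alpha}{y-z}/2}=c$ for all $\alpha\in\mathrm{Supp}(F)$. Thus $\inprod{\alpha}{y-z}$ is independent of $\alpha$, that is, $\inprod{\alpha-\beta}{(s-u)v}=0$ for all $\alpha,\beta\in\mathrm{Supp}(F)$. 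Since $s\neq u$, this means $v\in\mathrm{Slope}(F)^\perp$.

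For the converse, if $v\in\mathrm{Slope}(F)^\perp$, fix $\beta\in\mathrm{Supp}(F)$. Then $\inprod{\alpha}{v}=\inprod{\beta}{v}=:k$ for all $\alpha\in \mathrm{Supp}(F)$, so $g(t)=g(0)+kt$ is affine and not strictly convex. The first claim follows since $\mathrm{Slope}(F)=\bR^n$ gives $\mathrm{Slope}(F)^\perp=0$, and every line has $v\neq0$.

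The only delicate point, which is mild, is the use of positivity of $c_\alpha$ and of the exponentials. This guarantees that the Cauchy--Schwarz equality case yields proportionality with a positive constant on the whole support, rather than on a subset where entries vanish. I also need \cref{prop:midpoint_convex_extension} to upgrade strict midpoint convexity to strict convexity.
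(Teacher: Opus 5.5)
Your proposal is correct and follows essentially the same route as the paper: restrict to the line $x+tv$, use the equality case of Cauchy--Schwarz (positivity of the coefficients and exponentials forces proportionality on all of $\mathrm{Supp}(F)$) to get strict midpoint-convexity when $v\notin\mathrm{Slope}(F)^\perp$, upgrade to strict convexity via \cref{prop:midpoint_convex_extension}, and show the restriction is affine when $v\in\mathrm{Slope}(F)^\perp$. One cosmetic point: you use $u,v$ both for the line parameter and direction and for the Cauchy--Schwarz vectors $u_\alpha,v_\alpha$, so rename one of the pairs.
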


\begin{proof}
It suffices to prove the second statement. Fix a directional vector $v \in \bR^n$. Fixing an initial point $x \in \bR^m$, consider the functions
\begin{align*}
    F_{v;x}(t):=F(e^{x+tv}), \quad f_{v;x}(t):=\log F_{v;x}(t)
\end{align*}
for $t \in \bR$. 
Then we get the midpoint-convexity
\begin{align}\label{eq:convex_direcion}
    F_{v;x}\bigg(\frac{t+s}{2}\bigg)^2 \leq F_{v;x}(t)\cdot F_{v;x}(s)
\end{align}
for $t,s \in \bR$ by a Cauchy-Schwarz argument similar to \cref{lem:log-poly_convex}. In particular, the equality holds if and only if there exists $c>0$ such that $e^{t\inprod{\alpha}{v}}=c e^{s\inprod{\alpha}{v}}$ for all $\alpha \in \mathrm{Supp}(F)$. It implies  $(t-s)\inprod{\alpha}{v}= (t-s) \inprod{\beta}{v}$ for all $\alpha-\beta \in \mathrm{Slope}(F)$. 

If $v \notin \mathrm{Slope}(F)^\bot$, then there exists at least one $\alpha -\beta \in \mathrm{Slope}(F)$ such that $\inprod{\alpha-\beta}{v} \neq 0$. Therefore we see that the equality in \eqref{eq:convex_direcion} holds only when $t=s$, hence $f_{v;x}: \bR \to \bR$ is strictly midpoint-convex. By continuity and \cref{prop:midpoint_convex_extension}, $f_{v;x}$ is strictly convex.

On the other hand, if $v \in \mathrm{Slope}(F)^\bot$, then by choosing $\beta \in \mathrm{Supp}(F)$, we can write
\begin{align*}
    F_{v;x}(t) &= e^{\inprod{\beta}{x+tv}}\sum_{\alpha \in \mathrm{Supp}(F)} c_\alpha e^{\inprod{\alpha-\beta}{x+tv}} \\
    &= e^{\inprod{\beta}{x+tv}}\sum_{\alpha \in \mathrm{Supp}(F)} c_\alpha e^{\inprod{\alpha-\beta}{x}} = e^{t\inprod{\beta}{v}} F_{v;x}(0).
\end{align*}
Then $f_{v;x}(t)=\log F_{v;x}(t)$ is a linear function of $t$, which is not strictly convex.
\end{proof}

\subsection{Minima}
Recall that the convex hull $\mathrm{Conv}(S)$ of any subset $S \subset \bR^n$ is the smallest convex subset containing $S$. 
The convex hull of finitely many vectors $\alpha_1,\dots,\alpha_r \in \bR^n$ is given by
\begin{align*}
    \mathrm{Conv}(\alpha_1,\dots,\alpha_r)=\bigg\{\sum_{k=1}^r m_j \alpha_j ~\bigg| ~ m_j \geq 0,\ \sum_{k=1}^r m_j=1 \bigg\}.
\end{align*}
See 
\cite[Example A.1.3.5]{FundamentalConvex}. 
The \emph{recession cone} (or \emph{asymptotic cone}) of a non-empty closed convex subset $C \subset \bR^n$ is defined to be
\begin{align*}
    C_\infty:= \{ v \in \bR^n \mid \text{$x+tv \in C$ for all $t>0$}\},
\end{align*}
which does not depend on $x \in C$ \cite[Proposition A.2.2.1]{FundamentalConvex}. 
A convex subset $C \subset \bR^n$ is said to be \emph{full-dimensional} if its affine closure is $\bR^n$. 

\begin{prop}\label{lem:proper_convex}
Let $F_1,\dots,F_r \in \bZ_+[X_1^{\pm 1},\dots,X_n^{\pm 1}]$ be Laurent polynomials with positive integral coefficients, and let $S \subset \bigcup_{k=1}^r \mathrm{Supp}(F_k)$ be a subset such that $\mathrm{Conv}(S)$ is full-dimensional, and its interior contains $0$.
Then the sublevel sets of the function
\begin{align*}
    f(x):=\max_{k=1,\dots,r} f_k(x)
\end{align*}
are compact. 
Here, each $f_k$ is the function associated to $F_k$ as in \eqref{eq:log-poly}. In particular, $f$ attains a minimum in $\bR^n$. 
\end{prop}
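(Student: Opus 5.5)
The plan is to bound $f$ from below by a function that grows linearly in $|x|$. Then closedness, which comes from continuity, together with boundedness will give compactness of the sublevel sets.

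\emph{Step 1 (termwise lower bound).} Since every coefficient of $F_k$ is a positive integer, each $c_\alpha \geq 1$. For $\alpha \in \mathrm{Supp}(F_k)$ this gives
\begin{align*}
    f_k(x) = \log\bigg(\sum_{\beta} c_\beta e^{\inprod{\beta}{x}}\bigg) \geq \inprod{\alpha}{x}.
\end{align*}
Every $\alpha \in S$ lies in some $\mathrm{Supp}(F_k)$, so
\begin{align*}
    f(x) \geq h_S(x) := \max_{\alpha \in S} \inprod{\alpha}{x}.
\end{align*}
We may take $S$ finite, because the union of the supports is finite.

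\emph{Step 2 (positivity of the support function).} Because $0$ lies in the interior of $\mathrm{Conv}(S)$, there is $\epsilon>0$ such that the ball $B_\epsilon(0)$ is contained in $\mathrm{Conv}(S)$. For any unit vector $u$, the point $\epsilon u$ is a convex combination $\sum_j m_j \alpha_j$ of elements of $S$. Hence
\begin{align*}
    \epsilon = \inprod{\epsilon u}{u} = \sum_j m_j \inprod{\alpha_j}{u} \leq \max_{\alpha\in S}\inprod{\alpha}{u}.
\end{align*}
By positive homogeneity, $h_S(x) \geq \epsilon |x|$ for all $x \in \bR^n$. Consequently $f(x) \geq \epsilon|x|$.

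\emph{Step 3 (conclusion).} The sublevel set $\{f \leq c\}$ is contained in the closed ball of radius $c/\epsilon$. It is also closed, since $f$ is a maximum of finitely many continuous functions (\cref{lem:log-poly_convex}, \cref{cor:convex_continuous}). Hence it is compact. For the minimum, pick any $x_0$ and set $c=f(x_0)$. The continuous function $f$ attains a minimum on the nonempty compact set $\{f\leq c\}$, and this is a global minimum.

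The only step with real content is Step 2. Full-dimensionality is exactly what makes "$0$ in the interior" meaningful in $\bR^n$, so that a uniform $\epsilon$-ball exists. Everything else is routine.
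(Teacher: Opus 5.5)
Your proof is correct. Its first step, the termwise bound $\inprod{\alpha}{x} \le f_k(x)$ for $\alpha \in \mathrm{Supp}(F_k)$ from $c_\alpha \ge 1$, is exactly the paper's. The two routes diverge at the boundedness step.

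\textbf{The paper's route.} It places the sublevel set $\{f \le R\}$ inside the polyhedron $C = \{x \mid \inprod{\alpha}{x} \le R \text{ for all } \alpha \in S\}$ and proves $C$ bounded qualitatively, through the recession cone $C_\infty = \{v \mid \inprod{\alpha}{v} \le 0 \text{ for all } \alpha \in S\}$:
\begin{enumerate}
    \item it writes $0 = \sum_{\alpha \in S} m_\alpha \alpha$ with every $m_\alpha > 0$, which is possible because $0$ is an interior point;
    \item this forces $\inprod{\alpha}{v} = 0$ for every $v \in C_\infty$, and since $S$ spans $\bR^n$, $v = 0$;
    \item it then cites the theorem that a closed convex set with trivial recession cone is bounded.
\end{enumerate}

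\textbf{Your route.} You take a uniform $\epsilon$-ball inside $\mathrm{Conv}(S)$ and obtain directly the estimate $f(x) \ge \max_{\alpha \in S} \inprod{\alpha}{x} \ge \epsilon |x|$.

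\textbf{Comparison.} Your argument is more elementary and self-contained: it needs neither the recession-cone theorem nor the strictly positive convex combination. It also gives more, namely an explicit radius $R/\epsilon$ for the sublevel set and linear growth (coercivity) of $f$. The two arguments are closely related. The statement $C_\infty = \{0\}$ says exactly that your support function $\max_{\alpha \in S}\inprod{\alpha}{\cdot}$ is positive on the unit sphere, and compactness of the sphere upgrades this to your uniform $\epsilon$. The paper states only the qualitative version and delegates the upgrade to the cited convex-analysis result.

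\textbf{Minor points.} $S$ is automatically finite, since it sits inside a finite union of finite supports. For $c < 0$ the sublevel set is empty, so your ``ball of radius $c/\epsilon$'' should be read as the empty set.
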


\begin{proof}
For $R>0$, we claim that the sublevel set $K_R:=f^{-1}((-\infty,R]) \subset \bR^n$ is compact. Firstly, it is closed since $f$ is continuous as the maximum of finitely many continuous functions. 
For $x \in K_R$, the inequality $f(x) \leq R$ implies that $f_k(x) \leq R$ for all $k=1,\dots,r$. For any $\alpha \in \mathrm{Supp}(F_k)$,
we may write
\begin{align*}
    f_k(x) = \log \bigg( c_{\alpha} e^{\inprod{\alpha}{x}} + \sum_{\beta \neq \alpha} c_\beta e^{\inprod{\beta}{x}}\bigg) = \inprod{\alpha}{x} + \log\bigg( c_{\alpha_k} + \sum_{\beta \neq \alpha} c_\beta e^{\inprod{\beta-\alpha}{x}}\bigg).
\end{align*}
Then we get $\inprod{\alpha}{x} \leq R$, since all the coefficients are positive integers. 
Hence the sublevel set is contained in the closed convex set 
\begin{align*}
    C:= \{ x \in \bR^n \mid \text{$\inprod{\alpha}{x} \leq R$ for all $\alpha \in S$}\}.
\end{align*}
It suffices to prove that $C$ is bounded. 
Its recession cone is given by
\begin{align*}
    C_\infty=\{ v \in \bR^n \mid \text{$\inprod{\alpha}{v} \leq 0$ for all $\alpha \in S$}\}.
\end{align*}
Since the interior of $\mathrm{Conv}(S)$ contains $0$, there exists $m_\alpha > 0$ for all $\alpha \in S$ satisfying $\sum_{\alpha \in S} m_\alpha \alpha =0$ and $\sum_{\alpha \in S}m_\alpha =1$. Then any $v \in C_\infty$ satisfies $0 = \sum_{\alpha \in S} m_\alpha \inprod{\alpha}{v} \leq 0$, which implies $\inprod{\alpha}{v}=0$ for all $\alpha \in S$. Since $S$ linearly spans $\bR^n$, it follows that $v=0$. Therefore $C_\infty=\{0\}$, and hence $C$ is bounded by \cite[Proposition A.2.2.3]{FundamentalConvex}.

Therefore $K_R \subset C$ is compact.
It follows immediately that $f$ attains a minimum.
\end{proof}

\begin{prop}[{\cite[Theorem D.2.2.1]{FundamentalConvex}}]\label{prop:min_subgrad}
Let $f: \bR^n \to \bR$ be a convex function. Then $x \in \bR^n$ is a minimizer of $f$ if and only if $0 \in \partial f(x)$. Here,
\begin{align*}
    \partial f(x):= \{s \in \bR^n \mid \text{$\inprod{s}{v} \leq D_x f(v)$ for all $v \in \bR^n$} \}
\end{align*}
denotes the set of subgradients.
\end{prop}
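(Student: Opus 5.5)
The statement is the standard first-order optimality criterion for convex functions. I will prove both directions directly from the definition of $\partial f(x)$ and the directional derivative $D_x f$; by \cref{cor:convex_continuous}, $D_x f(v)$ exists for every $v$.

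\emph{Minimizer implies $0\in\partial f(x)$.} Suppose $x$ minimizes $f$. Then for any $v\in\bR^n$ and $t>0$ we have $f(x+tv)-f(x)\ge 0$, so the difference quotient $(f(x+tv)-f(x))/t$ is nonnegative. Letting $t\to+0$ gives $D_xf(v)\ge 0=\inprod{0}{v}$, and hence $0\in\partial f(x)$.

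\emph{$0\in\partial f(x)$ implies minimizer.} Suppose $D_xf(v)\ge 0$ for all $v$. Fix $y\in\bR^n$, set $v:=y-x$, and let $g(t):=f(x+tv)$, which is a convex function of $t\in\bR$. Applying \eqref{eq:convex_inequality} with $s=0$ and $u=1$, the quotient $(g(t)-g(0))/t$ is nondecreasing in $t\in(0,1]$. Therefore
\begin{align*}
    f(y)-f(x)=\frac{g(1)-g(0)}{1}\ \ge\ \lim_{t\to+0}\frac{g(t)-g(0)}{t}=D_xf(v)\ \ge 0.
\end{align*}
Since $y$ was arbitrary, $x$ is a global minimizer.

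The only point needing care is the monotonicity of the difference quotient. Strictly, \eqref{eq:convex_inequality} was stated on an open interval, so I apply it to $g$ restricted to $(-1,2)$, which contains $0,t,1$. With that, the limit in the display exists and is bounded above by the value at $t=1$. No other obstacle arises, and the argument does not use differentiability beyond the existence of one-sided directional derivatives.
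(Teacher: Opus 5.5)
Your proof is correct. The paper gives no argument of its own for this statement; it only cites \cite[Theorem D.2.2.1]{FundamentalConvex}, so there is no in-paper proof to compare against.

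Your argument is the standard one, made self-contained. The key fact is that, by the monotonicity of difference quotients \eqref{eq:convex_inequality}, one has $D_xf(v)=\inf_{t>0}\frac{f(x+tv)-f(x)}{t}$. Hence the condition $D_xf(v)\ge 0=\inprod{0}{v}$ for all $v$ is equivalent to $f(x+tv)\ge f(x)$ for all $t>0$ and all $v$, which is global minimality.

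Textbook treatments usually route this through the equivalence between two descriptions of $\partial f(x)$:
\begin{itemize}
\item the directional-derivative description, which is the one the paper writes down;
\item the affine-minorant description $f(y)\ge f(x)+\inprod{s}{y-x}$ for all $y$, under which $0\in\partial f(x)$ is minimality by definition.
\end{itemize}
Your proof is that equivalence specialized to $s=0$. It uses nothing beyond \eqref{eq:convex_inequality} and \cref{cor:convex_continuous}, the latter applied to $t\mapsto f(x+tv)$. So this part of \cref{app:convex} becomes independent of the external reference. What the citation offers instead is the general subdifferential framework, which the paper also uses via \cite[Example D.3.4]{FundamentalConvex} in Step 2 of the proof of \cref{thm:minimum_unique_fiber}.
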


\section{Cluster algebra}\label{app:cluster}
We closely follow the definitions of \cite[Chapter II]{Nak_CA}, while using Fock--Goncharov notation and convention \cite{FG09}.\footnote{Note that our exchange matrices, $C$- and $G$-matrices are transpose of those in \cite{Nak_CA}.} 
Fix a finite index set $I$ and a tuple $d=(d_i)_{i \in I}$ of positive integers. 
Let $\cF_A,\cF_X$ be two fields each isomorphic to the field $\bQ(u_i \mid i \in I)$ of rational functions in $|I|$ variables with coefficients in $\bQ$. 
\subsection{Seeds and mutations}
A (labeled, skew-symmetrizable) seed $\bi = (\ve,\mathbf{A},\mathbf{X})$ in $(\cF_A,\cF_X)$ consists of the following data:
\begin{itemize}
    \item $\ve=(\ve_{ij})_{i,j \in I}$ is an $I \times I$ integral matrix such that $(\ve_{ij}d_j)_{i,j \in I}$ is skew-symmetric. 
    \item $\mathbf{A} = (A_i)_{i \in I}$ is a tuple of elements of $\cF_A$ such that $\cF_A \cong \bQ(A_i \mid i \in I)$. 
    \item $\mathbf{X} = (X_i)_{i \in I}$ is a tuple of elements of $\cF_X$ such that $\cF_X \cong \bQ(X_i \mid i \in I)$. 
\end{itemize}
We call $\ve$ an exchange matrix, and each $A_i$ (resp. $X_i$) a cluster $K_2$-(resp. Poisson) variable. 
For $k \in I$, the seed mutation of $\bi$ in the direction $k$ produces a new seed $\mu_k(\bi) = (\ve',\mathbf{A}',\mathbf{X}')$ as follows:

\begin{align}
\ve'_{ij}&=
\begin{cases}
    -\ve_{i j} & \mbox{if $i=k$ or $j=k$,}\\
    \ve_{i j}+[\ve_{i k}]_+ [\ve_{k j}]_+ -[-\ve_{i k}]_+[-\ve_{k j}]_+ & \mbox{otherwise}.
\end{cases} \label{eq:mutation}\\
A_i'&= 
\begin{cases} 
    A_k^{-1}\left(\prod_{j\in I} A_{j}^{[\varepsilon_{kj}]_+}+ \prod_{j \in I} A_{j}^{[-\varepsilon_{kj}]_+}\right) & \mbox{$i=k$}, \\
    A_i & \mbox{if $i\neq k$},
\end{cases} \label{eq:A-transf}\\
 X'_i&:= 
\begin{cases}
    X_k^{-1} & i=k,\\
    X_i\,(1 + X_k^{-\mathrm{sgn}(\ve_{ik})})^{-\ve_{ik}} & i \neq k.
\end{cases} \label{eq:X-transf}
\end{align}
Here, $[x]_+:=\max\{x,0\}$ for $x \in \bR$, and $\sgn(x):=x/|x|$ for $x \neq 0$.


We say that two seeds $\bi$ and $\bi'$ in $\cF$ are mutation-equivalent to each other if $\bi'$ can be obtained from $\bi$ by a sequence of seed mutations and simultaneous change of labels by permutations on $I$.
An equivalence class of seeds is called a mutation class, and denoted by $\bs$. 

Given a seed $\bi \in \bs$, we denote the associated data by $\ve^\bi=(\ve_{ij}^\bi)$, $\mathbf{A}^{\!\bi}=(A_i^\bi)_{i \in I}$, and $\mathbf{X}^{\bi}=(X_i^\bi)_{i \in I}$.
Let $\CV_\bs:=\bigcup_{\bi \in \bs} \mathbf{A}^{\!\bi}$ denote the set of all the cluster variables.

\begin{thm}[Laurent phenomenon \cite{FZ-CA1}]\label{thm:Laurent_phenomenon}
Fix any seed $\bi \in \bs$. Then, each cluster variable $A \in \CV_\bs$ can be written as a Laurent polynomial of $A_j^{\bi}$, $j \in I$ with integral coefficients.
\end{thm}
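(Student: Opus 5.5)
The plan is to follow the original argument of Fomin--Zelevinsky: set up an induction over mutation sequences, then reduce the Laurent property to a statement about three consecutive mutations. Fix the seed $\bi$. Every cluster variable $A \in \CV_\bs$ is reached from $\bi$ by a finite mutation sequence $\bi=\bi_0 \to \bi_1 \to \cdots \to \bi_m$, up to relabeling, and relabeling is harmless. I will prove by induction on $m$ that every $A_i^{\bi_m}$ lies in the ring
\begin{align*}
\mathcal{L}:=\bZ[(A_j^{\bi})^{\pm 1} \mid j \in I].
\end{align*}
The base cases $m=0,1$ are immediate from \eqref{eq:A-transf}: after one mutation, the new variable is a binomial divided by $A_k^{\bi}$.

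For the inductive step, the standard reduction is to the ``caterpillar'' configuration. Write the sequence as $\bi \xrightarrow{k} \bi_1 \xrightarrow{\ell} \bi_2 \to \cdots$. By induction applied to $\bi_1$ and to $\bi_2$ as initial seeds, the target variable $A$ is a Laurent polynomial in the cluster of $\bi_1$ and also in the cluster of $\bi_2$. If $\ell=k$ the sequence shortens. Otherwise, I want to show three things:
\begin{enumerate}
\item $A \in \mathcal{L}[(A_k^{\bi_1})^{-1}]$, using the cluster of $\bi_1$;
\item $A \in \mathcal{L}[(A_\ell^{\bi_2})^{-1}, (A_k^{\bi_1})^{-1}]$, using the cluster of $\bi_2$ and substituting $A_\ell^{\bi_2}$ and $A_k^{\bi_1}$ back.
\end{enumerate}
The key step is then to show the following.
\begin{enumerate}
\setcounter{enumi}{2}
\item $A_k^{\bi_1}$ and the variable $A_\ell^{\bi_2}$ become coprime in $\mathcal{L}$ once they are expressed in the initial cluster. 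Here $A_\ell^{\bi_2}$ is a Laurent polynomial in $\mathcal{L}$ by the $m\le 2$ case.
\end{enumerate}
Coprimality then gives $A \in \mathcal{L}[(A_k^{\bi_1})^{-1}] \cap \mathcal{L}[(A_\ell^{\bi_2})^{-1}] = \mathcal{L}$, since $\mathcal{L}$ is a UFD.

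The main obstacle is the coprimality in step (3). It is also where the inductive bookkeeping is subtle, because one really needs the induction hypothesis to range over all initial seeds and all exchange matrices simultaneously. This is why Fomin--Zelevinsky phrase the argument with coefficients and the normalization condition. I would first try to verify it directly. $A_k^{\bi_1}=P/A_k^{\bi}$ with $P=\prod_j (A_j^{\bi})^{[\ve_{kj}]_+}+\prod_j (A_j^{\bi})^{[-\ve_{kj}]_+}$, a binomial that is not divisible by any variable. $A_\ell^{\bi_2}$ is computed with the mutated matrix $\ve'$. One shows that $A_\ell^{\bi_2}$, evaluated at $A_k^{\bi}=0$ after clearing denominators, is a nonzero polynomial not sharing factors with $P$. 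This uses the explicit form of $\ve'_{\ell j}$ from \eqref{eq:mutation}, and it needs a case split on the sign of $\ve_{k\ell}$, including the case $\ve_{k\ell}=0$, where the mutations commute.

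Integrality of the coefficients is automatic, since every step stays inside $\bZ$-Laurent rings. If the direct coprimality check becomes unwieldy, I would simply invoke \cite{FZ-CA1}, as the paper's attribution suggests. The statement is only recalled here, and the theorem as stated is exactly their Laurent phenomenon.
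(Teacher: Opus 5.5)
The paper gives no proof of this statement; it only cites \cite{FZ-CA1}, so your fallback matches what the paper does. The argument you sketch, however, has a genuine gap at its last step.

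The cluster of $\bi_2$ is $\mathbf{A}^{\!\bi}$ with $A_k^{\bi},A_\ell^{\bi}$ replaced by $A_k^{\bi_1},A_\ell^{\bi_2}$, so it still contains $A_k^{\bi_1}$. Step (2) therefore only gives $A\in\mathcal{L}[(A_k^{\bi_1})^{-1},(A_\ell^{\bi_2})^{-1}]$. That ring contains $\mathcal{L}[(A_k^{\bi_1})^{-1}]$, so step (2) follows from step (1) and adds nothing. The intersection $\mathcal{L}[(A_k^{\bi_1})^{-1}]\cap\mathcal{L}[(A_\ell^{\bi_2})^{-1}]$ that you invoke is not what (1) and (2) provide. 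No coprimality statement can repair this, because you never expand $A$ in a cluster that avoids $A_k^{\bi_1}$.

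The missing idea in Fomin--Zelevinsky's argument is a third seed $\bi':=\mu_k(\bi_2)$. The path $\bi'\to\bi_2\to\cdots\to\bi_m$ has length $m-1$, so induction makes $A$ Laurent in $\mathbf{A}^{\!\bi'}$. This cluster is $\mathbf{A}^{\!\bi}$ with $A_k^{\bi},A_\ell^{\bi}$ replaced by $A_k^{\bi'},A_\ell^{\bi_2}$, and it no longer contains $A_k^{\bi_1}$. One then needs two facts:
(a) $A_k^{\bi'}\in\mathcal{L}$;
(b) $A_k^{\bi_1}$ is coprime in $\mathcal{L}$ to both $A_\ell^{\bi_2}$ and $A_k^{\bi'}$.
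Together they give $A\in\mathcal{L}[(A_k^{\bi_1})^{-1}]\cap\mathcal{L}[(A_\ell^{\bi_2})^{-1},(A_k^{\bi'})^{-1}]=\mathcal{L}$. Point (a) is the heart of the proof, and it is absent from your plan. Write $A_k^{\bi'}=A_k^{\bi}N/P$, where $N\in\mathcal{L}$ is the $k$-th exchange binomial of $\bi_2$ after substituting $A_\ell^{\bi_2}$. One must show $P\mid N$, and this is where the mutation rule \eqref{eq:mutation} for $\ve^{\bi_2}$ is genuinely used.

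Separately, the coprimality you plan to check in the case $\ve_{k\ell}=0$ is false in the coefficient-free setting of the statement. Take the type $A_3$ quiver $k\to z\leftarrow\ell$. Then $A_k^{\bi_1}=(1+A_z)/A_k$ and $A_\ell^{\bi_2}=(1+A_z)/A_\ell$ share the factor $1+A_z$. This is not a mere technicality. Here $\bi'=\mu_\ell(\bi)$, and the element $A_kA_\ell/(1+A_z)=A_\ell(A_k^{\bi_1})^{-1}=A_k(A_\ell^{\bi_2})^{-1}$ is Laurent in both $\mathbf{A}^{\!\bi_1}$ and $\mathbf{A}^{\!\bi'}$, yet it does not lie in $\mathcal{L}$. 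So the intersection argument with $\bi_1$ and $\bi'$ cannot succeed without coefficients. This, and not bookkeeping over initial seeds, is why the standard proof passes through coefficients. One first proves the statement with sufficiently generic coefficients, for instance principal coefficients: there the extended exchange matrix has full rank at every seed, so exchange binomials in commuting directions are coprime. Then one specializes the coefficients to $1$.
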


\begin{thm}[Laurent positivity \cite{GHKK}]\label{thm:Laurent_positive}
All the coefficients in the Laurent expression of $A \in \CV_\bs$ above are positive.
\end{thm}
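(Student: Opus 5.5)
We will not reprove this from scratch in the paper; it is the theorem of Gross--Hacking--Keel--Kontsevich. What follows is a sketch of how that proof goes, using the scattering diagram and theta function technology.

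\textbf{Reduction to principal coefficients.} First reduce to the case of principal coefficients. By the separation formula of Fomin--Zelevinsky, each cluster variable $A\in\CV_\bs$, written in terms of $A^{\bi}_j$, is a monomial in the $A_j^{\bi}$ times the $F$-polynomial evaluated at $\hat y_j=\prod_i (A_i^{\bi})^{\ve_{ji}}$. So it suffices to show that every $F$-polynomial has nonnegative coefficients. More intrinsically, it suffices to prove positivity for the cluster variables of the principal-coefficient cluster variety $\A_{\mathrm{prin}}$ and then specialise the frozen variables to $1$.

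\textbf{Scattering diagram and theta functions.} Next, build the cluster scattering diagram $\mathfrak{D}_\bi$ in $M_\bR\cong\bR^I$. Start from the initial incoming walls $(e_i^\perp,\,1+z^{v_i})$ attached to the seed $\bi$. Take its consistent completion by the Kontsevich--Soibelman algorithm, working order by order in the completed group ring.

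For a generic basepoint $Q$ and a lattice point $m$, define the theta function $\vartheta_{Q,m}$ as a sum over broken lines. A broken line has initial exponent $m$ and ends at $Q$; at each wall crossing it may pick up one monomial term of the wall function. Consistency gives two facts. First, $\vartheta_{Q,m}$ is independent of $Q$ within a chamber. Second, $\vartheta_{Q,m}$ transforms across walls by the wall-crossing automorphisms, and these are exactly the cluster mutations.

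One then checks that cluster chambers of $\mathfrak{D}_\bi$ correspond to seeds in $\bs$. If $m$ is a $g$-vector, i.e.\ $m$ lies in a cluster chamber $\sigma$, then $\vartheta_{Q,m}$ with $Q\in\sigma$ is the single monomial $z^m$. Transporting $Q$ back to the initial chamber then identifies the cluster monomial with $\vartheta_{Q_+,m}$. Consequently, the Laurent expansion of $A$ in the seed $\bi$ is a sum of monomials attached to broken lines. Each such monomial carries a product of coefficients of the wall functions, so positivity of $A$ follows once all wall functions have nonnegative coefficients.

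\textbf{Main obstacle: positivity of the wall functions.} The hard step is proving that every wall function of $\mathfrak{D}_\bi$ is of the form $1+\sum c_k z^{kv}$ with $c_k\ge0$. The Kontsevich--Soibelman algorithm involves commutators and does not visibly preserve positivity.

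I would follow the GHKK/Gross--Pandharipande--Siebert strategy. Perturb the initial walls to general position, and factor each $1+z^{v_i}$ after passing to a larger lattice and an auxiliary torus. This reduces the problem to diagrams whose initial walls are $(1+z^{w})$ in general position. Then an induction on order shows that the consistent completion is built from local rank-two scatterings of two such walls. In rank two, with both initial functions of the form $1+z$, the output is a finite set of walls whose functions are powers of $1+z^{w}$; this is the basic pentagon/change-of-lattice computation.

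Finally, undo the perturbation and the lattice change; both operations preserve nonnegativity of coefficients. This yields positivity of all wall functions, and hence, by the broken-line expansion above, positivity of the Laurent expansion of every $A\in\CV_\bs$. A separate, routine point is that the skew-symmetrizable case requires working with the lattice $N^\circ$ and the $d_i$, which the change-of-lattice trick handles uniformly.
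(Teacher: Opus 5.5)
The paper gives no proof of this statement; it is quoted as a black box from \cite{GHKK}, exactly as you do, and your outline up to positivity of the wall functions (separation formula, broken lines, cluster monomials as theta functions) is a fair summary. Your account of the wall-positivity step should not be read literally, for three reasons. First, two walls $1+z^{m_1}$, $1+z^{m_2}$ in rank two can already produce infinitely many walls (e.g.\ for the Kronecker quiver). Second, the finite, single-new-wall picture is in general only available after a square-zero truncation $t_1^2=t_2^2=0$. Third, under such a truncation $1+z^{v_i}$ admits no factorization into positive square-zero factors, since $\log\bigl(1+(u_1+\cdots+u_k)z\bigr)=\sum_{l\ge 1}(-1)^{l+1}(l-1)!\,e_l(u)\,z^l$ when $u_j^2=0$. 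So the reduction as you describe it does not by itself control signs.
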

The structure of Laurent expression can be described in the following way (see \cite{IK19} for our convention):

\begin{thm}[Separation formula \cite{FZ-CA4}]\label{thm:separation}
For any $A \in \CV_\bs$, its Laurent expression for $\bi$ is written as
\begin{align}\label{eq:separation_formula}
    A= \prod_{j \in I} (A_j^\bi)^{g(A)^\bi_j} \cdot F^\bi_A(p^\ast \mathbf{X}^\bi),
\end{align}
where 
\begin{itemize}
    \item $\mathbf{g}(A)^\bi=(g(A)^\bi_j) \in \bZ^I$, and $F^\bi_A \in \bZ[y_1,\dots,y_n]$ has constant term $1$.  
    \item $p^\ast X_k^\bi :=\prod_{i \in I} (A_j^\bi)^{\ve_{kj}^\bi}$ for all $k \in I$.
\end{itemize}
The data $\mathbf{g}(A)^\bi$ and $F_A^\bi$ are called the $g$-vector and the $F$-polynomial of $A$ with respect to the initial seed $\bi$, respectively.
\end{thm}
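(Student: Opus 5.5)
This statement is the separation formula of Fomin--Zelevinsky, and the plan is to derive it from the standard principal-coefficient version of their theorem, rewritten in Fock--Goncharov conventions. I will work with the mutation class $\bs$ and the fixed seed $\bi$. I also fix a mutation sequence from $\bi$ to a seed $\bi'$ containing $A$ as some $A^{\bi'}_k$.

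\textbf{Step 1: principal coefficients.} Introduce the cluster pattern with principal coefficients at $\bi$. This means adjoining frozen variables $y_1,\dots,y_n$, so that the extended exchange matrix is $\ve^\bi$ stacked with the identity. By the Laurent phenomenon (\cref{thm:Laurent_phenomenon}) applied to this extended pattern, the corresponding cluster variable $A^{\mathrm{pr}}$ is a Laurent polynomial in the $A_j^\bi$ and a polynomial in the $y_j$.

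\textbf{Step 2: $F$-polynomial and $g$-vector.} Define $F^\bi_A := A^{\mathrm{pr}}|_{A_j^\bi = 1}$. Introduce the $\bZ^n$-grading $\deg A^\bi_j = e_j$ and $\deg y_k = -(\text{$k$-th row of }\ve^\bi)$, adapted to the transpose convention of the footnote. Each exchange binomial is homogeneous for this grading, so by induction along the mutation sequence every $A^{\mathrm{pr}}$ is homogeneous. Its degree defines $\mathbf{g}(A)^\bi$, and homogeneity gives
\begin{align*}
A^{\mathrm{pr}} = \prod_j (A_j^\bi)^{g(A)^\bi_j}\, F^\bi_A\big(y_k \prod_j (A_j^\bi)^{\ve^\bi_{kj}}\big).
\end{align*}

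\textbf{Step 3: specialization.} Use the universal property of principal coefficients: the coefficient-free pattern is obtained by the specialization $y_k \mapsto 1$, and this specialization commutes with mutation. This turns the displayed identity into \eqref{eq:separation_formula} with $p^\ast X_k^\bi = \prod_j (A_j^\bi)^{\ve_{kj}^\bi}$.

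\textbf{Step 4: constant term $1$.} Show that $F^\bi_A$ is a polynomial with constant term $1$. I expect this to be the main obstacle, and I would prove it by induction on mutations via the tropical $F$-recursion. Polynomiality comes from the Laurent phenomenon in the $y$'s (no $y$ is ever inverted). The constant term is the tropical evaluation of $F^\bi_A$ at $y_k\mapsto 0$, so it equals $1$ exactly when the exchange binomials never both lose their $y$-part. That is sign-coherence of $c$-vectors, which I would import from \cite{GHKK} together with \cref{thm:Laurent_positive}. Alternatively I would simply cite \cite[Prop.~3.13, Cor.~6.3]{FZ-CA4} combined with sign-coherence.

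Since the statement is quoted from the literature, the proof in the paper can consist of these steps with references, taking care over the transpose convention.
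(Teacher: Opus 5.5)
Your proposal is correct and follows the standard Fomin--Zelevinsky argument: principal coefficients, the $\bZ^n$-grading that makes each $A^{\mathrm{pr}}$ homogeneous, and then the specialization $y_k \mapsto 1$; this is exactly what the paper relies on when it quotes the statement from \cite{FZ-CA4} without proof. You are also right to single out the constant-term-$1$ claim, which appears in \cite{FZ-CA4} only as a conjecture equivalent to sign-coherence of $c$-vectors and so genuinely needs \cite{GHKK}; Laurent positivity is not needed for that step, since evaluating the $F$-polynomial recursion at $y=0$ suffices.
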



\subsubsection*{$C$- and $G$-matrices}
Let $\bs$ be a mutation class of seeds. Fix any initial seed $\bi_0 \in \bs$. Consider the extended exchange matrix
\begin{align*}
    \widetilde{\ve}^{\bi_0}:= (\ve^{\bi_0} \ |\ \mathrm{Id}),
\end{align*}
where $\mathrm{Id}$ denotes the $I\times I$ identity matrix. Then by successively applying the same mutation rule \eqref{eq:mutation} to $\widetilde{\ve}^{\bi_0}$, we get
\begin{align*}
    \widetilde{\ve}^{\bi}= (\ve^{\bi}\ |\ C^{\bi;\bi_0})
\end{align*}
for some integral matrix $C^{\bi;\bi_0}$ for each $\bi \in \bs$, which is called the \emph{$C$-matrix} with respect to the initial seed $\bi_0$. 

The integral matrix $G^{\bi;\bi_0}:=(g(A_i^\bi)_j^{\bi_0})_{i,j \in I}$ is called the \emph{$G$-matrix}. The following is useful:

\begin{thm}[Tropical duality {\cite[(3.11)]{NZ12}}]\label{thm:tropical_duality}
We have 
\begin{align*}
    G^{\bi;\bi_0} = D((C^{\bi;\bi_0})^\top)^{-1} D^{-1},
\end{align*}
where 
$D:=\mathrm{diag}(d_i \mid i \in I)$.
\end{thm}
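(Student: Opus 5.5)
The plan is to argue by induction on the length of a mutation sequence connecting $\bi_0$ to $\bi$. Permutations of labels act on both sides compatibly: they conjugate $C$ and $G$ by the same permutation matrix, and $D$ is permuted accordingly. So it suffices to treat a single mutation $\bi' = \mu_k(\bi)$. The base case $\bi=\bi_0$ is immediate: $C^{\bi_0;\bi_0}=\mathrm{Id}$ by definition of the extended exchange matrix, and $G^{\bi_0;\bi_0}=\mathrm{Id}$ because the $g$-vector of $A_i^{\bi_0}$ is the $i$-th unit vector, with $F$-polynomial $1$ in \cref{thm:separation}.

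For the inductive step I first write the mutation rule for $C$ in matrix form. Apply \eqref{eq:mutation} to the block $(\ve^\bi \mid C^{\bi;\bi_0})$ and use sign-coherence: each row $c_k$ of $C^{\bi;\bi_0}$ has all entries $\geq 0$ or all $\leq 0$, with sign $\sigma_k\in\{\pm1\}$. This is a consequence of \cite{GHKK}, equivalently of the fact that $F$-polynomials have constant term $1$, which is part of \cref{thm:separation}. One gets $C^{\bi';\bi_0}=E_{k,\sigma_k} C^{\bi;\bi_0}$, where $E_{k,\sigma}$ agrees with the identity except in row $k$. Its $(k,k)$ entry is $-1$ and its entries $(k,j)$ are $[\sigma\ve^{\bi}_{kj}]_+$, up to the transposition convention; the exact form is fixed by the rows/columns convention of $C$.

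Next I derive the analogous rule for $G$ by tropicalizing the exchange relation \eqref{eq:A-transf}. Substitute the separation formula into $A_k'A_k=\prod A_j^{[\ve_{kj}]_+}+\prod A_j^{[-\ve_{kj}]_+}$ and compare $g$-vectors, i.e.\ the degree with respect to the $\bZ^I$-grading in which $\deg p^\ast X_i^{\bi_0}=0$. The $g$-vector of a sum is governed by which of the two monomials survives after extracting $F$-polynomials. Constant term $1$ of $F$ together with the $c$-vector sign $\sigma_k$ tells us the surviving term is $\prod A_j^{[-\sigma_k\ve_{kj}]_+}$. This gives
\[
\mathbf g(A_k^{\bi'}) = -\mathbf g(A_k^\bi)+\sum_j [-\sigma_k\ve^\bi_{jk}]_+\,\mathbf g(A_j^\bi),
\]
again modulo the convention. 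Equivalently, $G^{\bi';\bi_0}=F_{k,\sigma_k}G^{\bi;\bi_0}$ with $F_{k,\sigma}$ identity off row $k$.

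It then remains to check the matrix identity $F_{k,\sigma}=D\,(E_{k,\sigma}^\top)^{-1}D^{-1}$. Both matrices are involutions, so this reads $F_{k,\sigma}=D E_{k,\sigma}^\top D^{-1}$ after using $E^{-1}=E$. Combined with the induction hypothesis this yields $G^{\bi';\bi_0}=D((C^{\bi';\bi_0})^\top)^{-1}D^{-1}$. The check is entrywise and uses skew-symmetrizability $\ve_{ij}d_j=-\ve_{ji}d_i$ to convert $[\sigma\ve_{kj}]_+ d_j/d_k$ into $[-\sigma\ve_{jk}]_+$.

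I expect the main obstacle to be the $g$-vector recursion. It requires sign-coherence, which I would import from \cite{GHKK} or from the constant-term property in \cref{thm:separation}. It also requires careful bookkeeping of transposes and of $d_i$ versus $d_i^{-1}$ under the paper's transposed conventions. Once both recursions are written with the same sign $\sigma_k$, the rest is linear algebra.
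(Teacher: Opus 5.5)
Your induction is correct along mutation paths, and it is essentially the Nakanishi--Zelevinsky argument. The paper does not prove this statement; it only cites \cite[(3.11)]{NZ12}. That argument uses sign-coherence to write $C'=EC$ and $G'=FG$ with elementary involutions carrying the same tropical sign $\sigma$, and skew-symmetrizability to get $F=DE^{\top}D^{-1}$.

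Two points of bookkeeping need fixing.
\begin{itemize}
\item In the paper's transposed convention, $E$ differs from $\mathrm{Id}$ in \emph{column} $k$, with $E_{ik}=[\sigma\ve_{ik}]_+$, while $F$ differs in \emph{row} $k$, with $F_{kj}=[-\sigma\ve_{kj}]_+$. The required identity $d_k[\sigma\ve_{jk}]_+/d_j=[-\sigma\ve_{kj}]_+$ is then exactly $\ve_{jk}d_k=-\ve_{kj}d_j$. If both matrices differ from $\mathrm{Id}$ in row $k$, as you wrote them, $DE^{\top}D^{-1}$ and $F$ cannot match.
\item The $g$-recursion has to be derived with principal coefficients at $\bi_0$. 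On the coefficient-free Laurent ring there is no $\bZ^I$-grading with $\deg A_j^{\bi_0}=\mathbf{e}_j$ and $\deg p^\ast X_i^{\bi_0}=0$ unless $\ve^{\bi_0}=0$. With principal coefficients the exchange binomial is homogeneous, and by sign-coherence exactly one of its two terms is coefficient-free; that term gives the recursion for $F$.
\end{itemize}

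The step that is actually wrong is the reduction for relabelings. Relabeling $\bi$ permutes only the rows of $C^{\bi;\bi_0}$ and $G^{\bi;\bi_0}$, since their columns are indexed by the fixed seed $\bi_0$. So both are multiplied on the left by a permutation matrix $P$; they are not conjugated. Then $D((PC)^{\top})^{-1}D^{-1}=(DPD^{-1})\,D(C^{\top})^{-1}D^{-1}$. Because $D$ is fixed in the statement rather than ``permuted accordingly'', compatibility requires $PD=DP$.

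This holds automatically when $\ve$ is indecomposable, since its skew-symmetrizer is unique up to scaling. It fails in general. Take $\ve=0$, $d=(1,2)$, and the seed obtained by swapping the two labels: then $C^{\bi;\bi_0}=G^{\bi;\bi_0}=P$, whereas $DPD^{-1}\neq P$.

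So no argument can cover arbitrary relabelings. The identity should be stated for seeds reached from $\bi_0$ by mutations, as in \cite{NZ12}, or for relabelings preserving $d$. That is exactly what your induction proves.
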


\begin{rem}\label{rem:sync}
By the synchronicity theorem \cite{Nak}, all the periodicities of the mutation rules \eqref{eq:mutation}, \eqref{eq:A-transf}, \eqref{eq:X-transf} are the same. Namely, for any seed $\bi'$ obtained from $\bi$ by a sequence of mutations, the following are equivalent:
\begin{itemize}
    \item $\ve^{\bi'}=\ve^\bi$.
    \item $\mathbf{A}^{\!\bi'}=\mathbf{A}^{\!\bi}$.
    \item $\mathbf{X}^{\bi'}=\mathbf{X}^\bi$.
\end{itemize}
Moreover, we have $C^{\bi';\bi}=\mathrm{Id}$ if and only if $\bi'=\bi$. 
\end{rem}

\subsubsection*{Cluster algebras}

\begin{dfn}
Let $\bs$ be a mutation class of seeds.
\begin{enumerate}
    \item The subalgebra $\CA_\bs \subset \cF_A$ generated by $\CV_\bs$ is called the \emph{cluster algebra} associated with the mutation class $\bs$. 
    \item The \emph{upper cluster algebra} (also known as \emph{universally Laurent algebra}) is defined to be
\begin{align*}
    \UCA_\bs:= \bigcap_{\bi \in \bs} \bZ[(A_j^\bi)^{\pm 1} \mid j \in I] \subset \cF_A.
\end{align*}
    \item The \emph{universally positive Laurent semiring} is the sub-semiring $\UCA^+_\bs \subset \UCA_\bs$ consisting of elements whose Laurent expressions have positive coefficients for any $\bi \in \bs$. 
\end{enumerate}

By \cref{thm:Laurent_phenomenon} and \cref{thm:Laurent_positive}, we have inclusions $\CA_\bs \subset \UCA^+_\bs \subset \UCA_\bs$.
\end{dfn}

\subsection{Cluster manifolds}
We quickly introduce the cluster manifolds $\A_\bs(\pos)$ and $\X_\bs(\pos)$, which are originally defined as positive real parts of cluster varieties $\A_\bs$ and $\X_\bs$, respectively in \cite{FG09}. 

\begin{dfn}
Given a mutation class $\bs$ of seeds, the associated \emph{cluster $K_2$-manifold} is a real-analytic manifold $\A_\bs(\pos)$ whose atlas consisting of global charts
\begin{align*}
    \psi_\bi: \A_\bs(\pos) \xrightarrow{\sim} \bR_{>0}^I
\end{align*}
parameterized by $\bi \in \bs$
such that the transition function $\psi_{\bi'} \circ \psi_\bi^{-1}: \bR_{>0}^I \to \bR_{>0}^I$ is given by the mutation formula \eqref{eq:A-transf} for any mutation $\bi'=\mu_k(\bi)$. 
Similarly, we define the \emph{cluster Poisson manifold} $\X_\bs(\pos)$ by using the transformation rule \eqref{eq:X-transf} instead.
\end{dfn}
In particular, both $\A_\bs(\pos)$ and $\X_\bs(\pos)$ are homeomorphic to $\bR_{>0}^n$ via cluster charts. In this paper, we will mainly work in the log-chart
\begin{align}\label{eq:log_chart}
    \log \psi_\bi: \A_\bs(\pos) \to \bR_{>0}^I \xrightarrow{\log} \bR^I,
\end{align}
where the second map is defined by $(u_j)_{j \in I} \mapsto (\log u_j)_{j \in I}$.

The following is immediate from definitions:

\begin{lem}
Any element $F \in \UCA_\bs$ defines a real-analytic function $F: \A_\bs(\pos) \to \bR_{>0}$.
\end{lem}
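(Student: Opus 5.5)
The plan is to fix an arbitrary seed $\bi \in \bs$ and express $F$ in the chart $\psi_\bi$. By definition of $\UCA_\bs$, $F$ lies in $\bZ[(A_j^\bi)^{\pm 1} \mid j \in I]$, so it is a Laurent polynomial $F = P_\bi(A_j^\bi \mid j\in I)$ with integral coefficients. Composing with $\psi_\bi$, the function $F\circ \psi_\bi^{-1}$ on $\bR_{>0}^I$ is $P_\bi$ evaluated at the coordinates. A Laurent polynomial is real-analytic on $\bR_{>0}^I$, since every monomial is analytic there. Because $\psi_\bi$ is a global real-analytic chart, $F$ is real-analytic on $\A_\bs(\pos)$.

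The charts for different seeds glue consistently, so the function is well defined. The Laurent expressions $P_\bi$ and $P_{\bi'}$ for $\bi'=\mu_k(\bi)$ are related by substituting the mutation formula \eqref{eq:A-transf}, since both represent the same element of $\cF_A$. The transition maps of the atlas are given by the same formula, so $P_{\bi'}\circ\psi_{\bi'} = P_\bi\circ\psi_\bi$ pointwise. One needs to check that an identity of rational functions in $\cF_A$ yields an identity of functions on $\bR_{>0}^I$. This holds because the relevant denominators, namely monomials and the binomials in \eqref{eq:A-transf}, are strictly positive there. It also covers the relabelling of $I$ by permutations.

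The main subtlety is the target $\bR_{>0}$, because a general element of $\UCA_\bs$ can have negative coefficients; for instance $A_1 - A_2$ is not positive everywhere. So I expect the intended statement to concern $F\in \UCA^+_\bs$, or else the codomain $\bR$. If $F \in \UCA^+_\bs$ and $F\neq 0$, then $P_\bi$ has nonnegative coefficients, not all zero. Each monomial is positive on $\bR_{>0}^I$, so $F>0$. For a general $F\in\UCA_\bs$ I would state the conclusion with values in $\bR$ and note that positivity is only guaranteed on $\UCA^+_\bs$, which is the setting used for filling sets later.
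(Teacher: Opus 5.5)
Your argument is correct and is the same unpacking of definitions that the paper leaves implicit (it calls the lemma ``immediate from definitions'' and gives no proof). Your caveat is also right: $\UCA_\bs$ is a ring containing elements like $-1$ or $A_1-A_2$, so the codomain $\bR_{>0}$ holds only for nonzero $F\in\UCA^+_\bs$, which is the only case the paper later uses (for $L_F=\log F$).
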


\subsection{Cluster modular group}\label{subsec:cluster_modular}
The combinatorial relations among the seeds $\bi \in \bs$ are organized in the following way.
The \emph{labeled exchange graph} $\bExch_\bs$ is the graph with vertices given by the seeds $\bi \in \bs$ and labeled edges of the following two types:
\begin{itemize}
    \item edges of the form $\bi \overbar{k} \bi'$ corresponding to the mutation $\bi'=\mu_k(\bi)$ for $k \in I_\uf$;
    \item edges of the form $\bi \overbar{(i\ j)} \bi'$ corresponding to the transposition $(i\ j)$ of labels with $i,j \in I$.
\end{itemize}
Note that each vertex $\bi$ of $\bExch_\bs$ is colored by the exchange matrix $\ve^\bi$. 

\begin{dfn}[Cluster modular group]
The \emph{cluster modular group} $\Gamma_\bs \subset \mathrm{Aut}(\bExch_\bs)$ is the subgroup consisting of graph automorphisms that preserves the vertex coloring and the edge labeling.
\end{dfn}
In other words, $\Gamma_\bs$ is the permutation group of seeds in $\bs$ that preserves the exchange matrix and commutes with mutations and permutations. 

An element $\phi \in \Gamma_\bs$ corresponds to a ``mutation loop'' in \cite{KT15} in the following way. Fixing a vertex $\bi \in \bExch_\bs$, we can take an edge path in $\bExch_\bs$
\begin{align*}
    \gamma: \bi=\bi_0 \xrightarrow{\alpha_1} \bi_1 \xrightarrow{\alpha_2} \dots \xrightarrow{\alpha_L} \bi_L=\phi^{-1}(\bi).
\end{align*}
Here, $\alpha_j$ denotes the label on the edge between $\bi_{j-1}$ and $\bi_j$, corresponding either to a mutation or a transposition of labels. We have the boundary condition $\ve^{\bi_0}=\ve^{\bi_L}$, and hence $\gamma$ is a mutation loop in the sense of \cite{KT15}. 
We call $\gamma$ a \emph{representation path} of $\phi$.

The cluster modular group $\Gamma_\bs$ naturally acts on $\A_\bs(\pos)$ so that
\begin{align}\label{eq:def_action}
    &A_j^\bi(\phi(g)) = A_j^{\phi^{-1}(\bi)}(g) 
\end{align}
for $g \in \A_\bs(\pos)$, $\phi \in \Gamma_\bs$, $\bi \in \bs$ and $j \in I$. It acts on $\X_\bs(\pos)$ similarly.


The following is a consequence of the synchronicity theorem (\cref{rem:sync}):
\begin{thm}\label{thm:faithful}
The $\Gamma_\bs$-actions on $\A_\bs$ and $\X_\bs$ are faithful. 
\end{thm}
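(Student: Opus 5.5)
Suppose $\phi\in\Gamma_\bs$ acts trivially on $\A_\bs(\pos)$ (the $\X$ case is parallel). We will show that $\phi$ is the identity of $\bExch_\bs$, using the synchronicity theorem (\cref{rem:sync}). Fix a seed $\bi\in\bs$ and set $\bi':=\phi^{-1}(\bi)$. Since $\phi$ preserves the vertex coloring, $\ve^{\bi'}=\ve^\bi$.

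By \eqref{eq:def_action}, triviality of the action gives
\[
A_j^{\bi'}(g)=A_j^\bi(g)
\]
for all $g\in\A_\bs(\pos)$ and all $j\in I$. Both sides are rational functions in $\cF_A$; expressed in the chart $\psi_\bi$, the left side is the rational function $A^{\bi'}_j$ written in the variables $A^\bi_k$. Two rational functions that agree on the Zariski-dense set $\bR_{>0}^I$ coincide. Hence $\mathbf{A}^{\!\bi'}=\mathbf{A}^{\!\bi}$ as tuples in $\cF_A$.

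Now $\bi'$ is obtained from $\bi$ by a sequence of mutations and relabelings, namely along a representation path of $\phi$. Relabelings can be pushed to the end of the path, so \cref{rem:sync} applies and yields $\bi'=\bi$ as seeds. Equivalently, $C^{\bi';\bi}=\mathrm{Id}$: the equality of $A$-tuples forces the $G$-matrix to be $\mathrm{Id}$, and tropical duality (\cref{thm:tropical_duality}) then gives $C=\mathrm{Id}$. Thus $\phi$ fixes every vertex of $\bExch_\bs$.

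Since $\phi$ also preserves edge labels, and each vertex has at most one edge with a given label, $\phi$ fixes every edge as well. Hence $\phi=\mathrm{id}$.

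For $\X_\bs(\pos)$ the argument is identical, using $\mathbf{X}^{\bi'}=\mathbf{X}^\bi$ and the $\X$-part of synchronicity.

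The main obstacle is making precise the passage from \enquote{$\bi'$ has the same $A$-tuple as $\bi$} to \enquote{$\bi'$ and $\bi$ are the same vertex of $\bExch_\bs$}. This requires that vertices are genuinely identified by their seed data, including the labeling. The plan is to settle this by the statement $C^{\bi';\bi}=\mathrm{Id}\iff\bi'=\bi$ in \cref{rem:sync}.
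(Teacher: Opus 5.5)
Your argument is correct and follows the paper's route: the paper gives no separate proof and simply records the theorem as a consequence of the synchronicity theorem (\cref{rem:sync}), and you have spelled out that reduction (trivial action $\Rightarrow$ $\mathbf{A}^{\phi^{-1}(\bi)}=\mathbf{A}^{\bi}$ in $\cF_A$ by Zariski density $\Rightarrow$ $\phi^{-1}(\bi)=\bi$ by synchronicity, and likewise for $\X$). One small caution: your side remark that equality of the coefficient-free $A$-tuples forces $G^{\bi';\bi}=\mathrm{Id}$ cannot be read off from the separation formula when $\ve^\bi$ is degenerate, since $g$-vectors are defined via principal coefficients; that implication is itself the content of synchronicity, which your main line already invokes.
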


\subsubsection*{Cluster DT transformation}
Fix an initial seed $\bi \in \bs$. A seed $\bi^\ast \in \bs$ is called \emph{terminal} if it satisfies $C^{\bi^\ast;\bi}=-\mathrm{Id}$. If such a seed exists, then there is a unique element $\tau^\bi \in \Gamma_\bs$ such that $\bi^\ast=(\tau^\bi)^{-1}(\bi)$. 
It is known that this element and its existence do not depend on the initial seed $\bi$ \cite[Theorem 3.6]{GS18}, and $\tau=\tau^\bi \in \Gamma_\bs$ is a central element \cite[Corollary 3.7]{GS18}. We call $\tau$ the \emph{cluster Donaldson--Thomas transformation} (\emph{cluster DT transformation} for short). We say that the mutation class $\bs$ (or the cluster modular group $\Gamma_\bs$) admits a cluster DT transformation if $\tau$ exists. 

Note that a representation sequence of $\tau$ is a reddening sequence in the sense of \cite{KellerDT}. A maximal green sequence \cite{KellerDT} is a reddening sequence, but the converse is not true in general.

\subsection{Cluster ensemble}
Let $\bs$ be a mutation class of seeds. The maps
\begin{align*}
    p_\bi: \bR_{>0}^I \to \bR_{>0}^I, \quad p_\bi^\ast X_k^\bi :=\prod_{i \in I} (A_j^\bi)^{\ve_{kj}^\bi}
\end{align*}
commute with cluster transformations \eqref{eq:A-transf}, \eqref{eq:X-transf}. In particular, they combine to define a $\Gamma_\bs$-equivariant real-analytic map
\begin{align*}
    p: \A_\bs(\pos) \to \X_\bs(\pos),
\end{align*}
which we call the \emph{ensemble map}.

The fibers of the ensemble map is described as follows \cite[Section 2.3]{FG09}. 
Fix a seed $\bi \in \bs$. 
For a vector $\beta=(\beta_j^\bi)\in \ker \ve^\bi \subset \bR^I$, Define a flow $\cF_{t\beta}: \A_\bs(\pos) \to \A_\bs(\pos)$ for $t \in \bR$ by the formula
\begin{align*}
    \cF_{t\beta}^\ast A_j^\bi :=e^{t\beta_j^\bi} A_j^\bi
\end{align*}
for $j \in I$. The flows $\cF_{t\beta}$ generate the fibers of $p$ \cite[Lemma 2.10]{FG09}. Namely, for any two points $g_0,g_1 \in \A_\bs(\pos)$ such that $p(g_0)=p(g_1)$, there exist $\beta \in \ker \ve^\bi$ such that $g_1=\cF_\beta(g_0)$. Indeed, we can take $\beta=\log\psi_\bi(g_1)-\log\psi_\bi(g_0)$. 

For any mutation $\bi'=\mu_k(\bi)$, we have $\ker \ve^{\bi'}=\ker \ve^\bi=:K^\vee$. 
For $\beta \in K^\vee$, we regard the components $\beta_j^\bi$ as coordinates of $\beta$ with respect to $\bi$. 
The mutation rule for $\beta \in K^\vee$ is
\begin{align*}
    \beta'_i = \begin{cases}
        \beta_i & \mbox{if $i \neq k$}, \\
        -\beta_k+ \sum_{j \in I} [\pm \ve_{kj}]_+ \beta_j & \mbox{if $i=k$}.
    \end{cases}
\end{align*}
Here, the sign in the second line does not matter. Via this mutation rule, we get a representation 
\begin{align}\label{eq:rep_fiber}
    \Gamma_\bs \to \mathrm{GL}(K^\vee).
\end{align}
The action map
\begin{align}\label{eq:fiber_action}
    K^\vee \times \A_\bs(\pos) \to \A_\bs(\pos)
\end{align}
is $\Gamma_\bs$-equivariant. 

Any cluster variable $A \in \CV_\bs$ is homogeneous with respect to this flow. Indeed, by the separation formula (\cref{thm:separation}), we get
\begin{align}\label{eq:CV_weight}
    \cF_{t\beta}^\ast A = e^{t \inprod{\mathbf{g}(A)^\bi}{\beta}} A
\end{align}
for $\beta=(\beta^\bi_j) \in \ker \ve^\bi$.


\section{Nielsen realization theorem}\label{sec:fixed_point_general}

\subsection{Nielsen realization theorem}
Let $\bs$ be a mutation class of seeds. We fix a linear ordering $\{1,\dots,n \} \cong I$ with $n=|I|$.

\begin{conv}
For any function $F: \A_\bs(\pos) \to \bR$ and a seed $\bi \in \bs$, let 
\begin{align*}
    F|_\bi:=F \circ \psi_\bi^{-1}: \bR_{>0}^n \to \bR \quad \mbox{and} \quad \lc{F}{\bi}:=F \circ \log\psi_\bi^{-1}: \bR^n \to \bR
\end{align*}
denote the coordinate expressions in the cluster and log-cluster chart, respectively. 
\end{conv}
The following is an immediate consequence of \cref{lem:log-poly_convex}:

\begin{lem}\label{lem:UCA_convex}
For any $F \in \UCA^+_\bs$ and $\bi \in \bs$, the function $\lc{\log F}{\bi}: \bR^n \to \bR$ is convex. 
\end{lem}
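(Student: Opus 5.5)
The statement follows by unwinding the coordinate expressions and applying \cref{lem:log-poly_convex}. Fix $F \in \UCA^+_\bs$ and a seed $\bi \in \bs$. By the definition of the universally positive Laurent semiring, $F$ admits a Laurent expression in the cluster variables $A_1^\bi,\dots,A_n^\bi$ of $\bi$ with positive integral coefficients. That is, there is a Laurent polynomial $\widetilde F \in \bZ_+[X_1^{\pm 1},\dots,X_n^{\pm 1}]$ such that
\begin{align*}
    F = \widetilde F(A_1^\bi,\dots,A_n^\bi).
\end{align*}
Consequently, in the cluster chart we have $F|_\bi(u_1,\dots,u_n) = \widetilde F(u_1,\dots,u_n)$ for $(u_j) \in \bR_{>0}^n$, since $A_j^\bi \circ \psi_\bi^{-1}$ is the $j$-th coordinate.

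Next we pass to the log-chart \eqref{eq:log_chart}. The inverse $\log\psi_\bi^{-1}$ sends $x=(x_1,\dots,x_n) \in \bR^n$ to the point with coordinates $e^{x}=(e^{x_1},\dots,e^{x_n})$. Hence
\begin{align*}
    \lc{(\log F)}{\bi}(x) = \log \widetilde F(e^{x_1},\dots,e^{x_n}).
\end{align*}
This is exactly a function of the form \eqref{eq:log-poly}. Here $\widetilde F$ takes positive values on $\bR_{>0}^n$: it is a nonzero positive Laurent polynomial, and the case $F=0$ is excluded or trivial, since $\log F$ is only defined for $F$ positive. Therefore \cref{lem:log-poly_convex} applies and gives the convexity on $\bR^n$.

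There is no real obstacle. The only point to check is that the positivity of the coefficients holds in every seed $\bi$, and not merely in some seed. This is built into the definition of $\UCA^+_\bs$.
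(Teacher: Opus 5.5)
Your proof is correct and takes the same approach as the paper, which simply calls the lemma an immediate consequence of \cref{lem:log-poly_convex}. You spell out that reduction: in the seed $\bi$, $F$ has a Laurent expression with positive integral coefficients, and in the log-chart $\lc{\log F}{\bi}$ becomes a function of the form \eqref{eq:log-poly}.
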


The next lemma is useful to investigate the strict convexity:

\begin{lem}\label{lem:CV_Span}
For any $A \in \CV_\bs$, we have $\mathrm{Slope}(A|_\bi) \subset (\ker \ve^\bi)^\bot$. 

Equivalently, $\mathrm{Slope}(A|_\bi)^\bot \supset \ker \ve^\bi$. 
\end{lem}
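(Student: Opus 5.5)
The plan is to read off the support of the Laurent expression $A|_\bi$ from the separation formula (\cref{thm:separation}). Write
\[
A|_\bi = \prod_{j \in I} (A_j^\bi)^{g(A)^\bi_j} \cdot F^\bi_A(p^\ast \mathbf{X}^\bi), \qquad p^\ast X_k^\bi = \prod_{j \in I} (A_j^\bi)^{\ve^\bi_{kj}}.
\]
A monomial $y^m = \prod_k y_k^{m_k}$ of $F_A^\bi$ becomes the Laurent monomial with exponent vector $\sum_k m_k \ve^\bi_{k\bullet}$, i.e.\ $(\ve^\bi)^\top m$, where $\ve^\bi_{k\bullet}$ is the $k$-th row of $\ve^\bi$. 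It follows that
\[
\mathrm{Supp}(A|_\bi) \subset \mathbf{g}(A)^\bi + \mathrm{span}_\bZ\{\ve^\bi_{k\bullet} \mid k \in I\}.
\]
Cancellation among different $m$ could only shrink the support, so this containment holds regardless. Hence every difference $\alpha-\beta$ of two support elements lies in the row space of $\ve^\bi$. This gives $\mathrm{Slope}(A|_\bi) \subset \mathrm{Row}(\ve^\bi)$.

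Next I use the standard linear-algebra fact that the row space of a matrix is the orthogonal complement of its kernel, i.e.\ $\mathrm{Row}(\ve^\bi) = (\ker \ve^\bi)^\perp$. Concretely, for $v \in \ker \ve^\bi$ we have $\inprod{\ve^\bi_{k\bullet}}{v} = (\ve^\bi v)_k = 0$. So each generator of the slope pairs trivially with $\ker\ve^\bi$, and $\mathrm{Slope}(A|_\bi) \subset (\ker\ve^\bi)^\perp$.

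An alternative route is to invoke the homogeneity \eqref{eq:CV_weight}. For $\beta\in\ker\ve^\bi$, each monomial $c_\alpha e^{\inprod{\alpha}{x}}$ must scale by the common factor $e^{t\inprod{\mathbf{g}(A)^\bi}{\beta}}$, which forces $\inprod{\alpha-\alpha'}{\beta}=0$. I would use whichever is cleaner, most likely the first.

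The equivalent form follows by taking orthogonal complements in the finite-dimensional space $\bR^n$. These are order-reversing, and $V^{\perp\perp}=V$ for subspaces, which gives $\mathrm{Slope}(A|_\bi)^\perp \supset (\ker\ve^\bi)^{\perp\perp} = \ker\ve^\bi$.

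The only delicate point is the convention: the exponent of $A_j^\bi$ in $p^\ast X_k^\bi$ is $\ve_{kj}$, so the relevant vectors are rows, not columns. With the paper's convention these are exactly orthogonal to $\ker \ve^\bi$ (right kernel), so the argument goes through. Otherwise the proof is routine.
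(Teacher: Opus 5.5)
Your proposal is correct and follows the paper's own proof. The separation formula puts every exponent vector of $A|_\bi$ in $\mathbf{g}(A)^\bi$ plus the integer span of the rows of $\ve^\bi$, and the row space of $\ve^\bi$ equals $(\ker \ve^\bi)^\perp$; your row-versus-column check and the double-complement step for the equivalent form are accurate details the paper leaves implicit.
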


\begin{proof}
It follows from the separation formula \eqref{eq:separation_formula} that $\mathrm{Slope}(A|_\bi)$ is contained in the row span of $\ve^\bi$, which is $(\ker \ve^\bi)^\bot$.
\end{proof}

\begin{rem}\label{rem:CV_flat_direction}
From \cref{lem:CV_Span} and \cref{lem:strict_convex}, it follows that $\lc{\log A}{\bi}$ is linear along the direction $\ker \ve^\bi$ for any cluster variable $A \in \CV_\bs$. 
Note that \cref{lem:CV_Span} also holds for  any theta functions of \cite{GHKK}. 
\end{rem}

\begin{dfn}[filling sets]\label{def:filling_set}
A \emph{filling set} is a finite subset $\Lambda \subset \UCA^+_\bs$ satisfying the following conditions for some $\bi \in \bs$:
\begin{enumerate}
    \item \textbf{Balanced}: There exists a subset $S \subset \bigcup_{F \in \Lambda} \mathrm{Supp}(F|_\bi)$ such that $\mathrm{Conv}(S)$ is full-dimensional, and its interior contains $0$.
    \item \textbf{Slope-span}: We have $\mathrm{Slope}(F|_\bi) \subset (\ker \ve^\bi)^\bot$ for all $F \in \Lambda$, and
    \begin{align*}
        \mathrm{span}_\bR \bigcup_{F \in \Lambda}\mathrm{Slope}(F|_\bi) = (\ker \ve^\bi)^\bot. 
    \end{align*}
    for some $\bi \in \bs$.
\end{enumerate}
\end{dfn}
We remark that the second condition implies that $\lc{\log F}{\bi}$ is linear along the direction $\ker \ve^\bi$ (\cref{rem:CV_flat_direction}). 

Henceforth, we fix a filling set $\Lambda \subset \UCA^+_\bs$, and consider the function\footnote{Here, $L$ stands for ``Log''. $L_F$ should not be confused with the cluster length function \cite{GS19}, unless $F$ is a cluster monomial.}
\begin{align*}
    L_F:=\log F : \A_\bs(\pos) \to \bR
\end{align*}
for each $F \in \Lambda$, 
whose log-coordinate expressions are convex by \cref{lem:UCA_convex}. 
Let $(a_1^\bi,\dots,a^\bi_n)$ be the coordinates of the chart $\log \psi_\bi$. 

Let $G \subset \Gamma_\bs$ be any finite subgroup. 
Fixing any seed $\bi \in \bs$, we consider the function
\begin{align*}
    L_G:= \max \{ L_F \mid F \in \Lambda_G \}, \quad \Lambda_G:=\bigcup_{\phi \in G} \phi^\ast \Lambda,
\end{align*}
where $\phi^\ast$ stands for the pull-back action on $\UCA^+_\bs \subset C^\omega(\A_\bs(\pos))$. 
The log-coordinate expressions of $L_G$ is convex as the maximum of finitely many convex functions. 
Moreover, it is clearly $G$-invariant. 

\begin{lem}
For any filling set $\Lambda$ and finite subgroup $G \subset \Gamma_\bs$, 
the function $L_G: \A_\bs(\pos) \to \bR$ attains a unique minimum.
\end{lem}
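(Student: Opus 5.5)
The plan is to prove existence and uniqueness separately, working throughout in the log-chart $\log\psi_\bi$ for the seed $\bi$ that appears in \cref{def:filling_set}.

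\textbf{Existence.} Each $\phi^\ast F$ with $F \in \Lambda$ lies in $\UCA^+_\bs$, so $\lc{(\phi^\ast F)}{\bi}$ is a Laurent polynomial with positive integral coefficients. Hence $\lc{L_G}{\bi}$ is the maximum of finitely many functions of the form \eqref{eq:log-poly}. The index set is $\Lambda_G \supset \Lambda$, since the identity lies in $G$. The balanced condition therefore provides a subset $S \subset \bigcup_{F\in\Lambda_G}\mathrm{Supp}(F|_\bi)$ with $\mathrm{Conv}(S)$ full-dimensional and $0$ in its interior. \cref{lem:proper_convex} then says that every sublevel set of $\lc{L_G}{\bi}$ is compact, so $L_G$ attains its minimum value $m$. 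The minimizer set $M$ is a nonempty, compact and convex subset of $\bR^n$, because $\lc{L_G}{\bi}$ is convex.

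\textbf{Uniqueness.} Suppose $M$ contains two points $x \neq y$, and put $v := y-x$. I would derive a contradiction in three steps.
\begin{enumerate}
\item Pick a point $z$ in the relative interior of the segment $[x,y]$, so that $L_G$ is constant equal to $m$ near $z$ along the direction $v$. Let $\Lambda_z \subset \Lambda_G$ be the set of $F$ that are active at $z$, i.e.\ with $L_F(z)=m$. Each such $L_F$ is convex, at most $m$ near $z$ on the line, and equal to $m$ at $z$. So it is constant along $v$ near $z$, and \cref{lem:strict_convex} gives $v \in \mathrm{Slope}(F|_\bi)^\perp$.
\item The directional derivative of $L_F$ at $z$ in the direction $v$ equals $\inprod{\alpha}{v}$ for every $\alpha \in \mathrm{Supp}(F|_\bi)$, and by Step 1 this derivative vanishes. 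Hence $\inprod{\alpha}{v}=0$ for all $\alpha$ in the supports of the active $F$, not merely for their differences.
\item By \cref{prop:min_subgrad}, $0$ lies in the subdifferential of $\lc{L_G}{\bi}$ at $z$. This subdifferential is the convex hull of the gradients of the active $L_F$, and each such gradient is a positive weighted average of the points of $\mathrm{Supp}(F|_\bi)$.
\end{enumerate}

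\textbf{The main obstacle} is to upgrade these local constraints, which only involve the active functions at $z$, to the global spanning data. The aim is to reach $v \perp S$ (contradicting the balanced condition) or $v\perp\bigcup_F \mathrm{Slope}(F|_\bi)$ (contradicting slope-span). My first attempt is to split $v = v_K + v_\perp$ with $v_K\in\ker\ve^\bi$ and $v_\perp \in (\ker\ve^\bi)^\perp$. I would then use slope-span together with the $G$-invariance of $M$, together with the linearity of every $L_F$ along $\ker\ve^\bi$, to show first $v_\perp=0$ and then $v_K=0$. The second step would use that the linear rates $\inprod{\alpha_F}{v_K}$ cannot all vanish, since otherwise the balanced condition fails.

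If this local argument does not close, the fallback is to replace $z$ by a point of the relative interior of $M$ itself. There the active set $\Lambda_z$ is the same for all such points and is maximal, so the vanishing of Steps 1 and 2 holds for every direction parallel to $M$. I would then try to show that the inactive functions are strictly below $m$ on all of $M$, which contradicts the compactness of $M$ unless $M$ is a point.
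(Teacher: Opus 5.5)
Your existence paragraph is correct and is exactly the paper's proof. Since $\Lambda\subset\Lambda_G$, the balanced condition lets you apply \cref{lem:proper_convex}, so the sublevel sets of $\lc{L_G}{\bi}$ are compact and the minimum is attained. That is all the lemma asserts: ``unique minimum'' means the minimum \emph{value}, which is trivially unique once attained. The paper says so right after the lemma and treats minimizers separately in \cref{thm:minimum_unique_fiber}. Your proof is complete at that point.

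You should drop your uniqueness section, and not only because it is unnecessary. The obstacle you isolated is fatal: uniqueness of the minimizer fails for general filling sets. Take $I=\{0,1,2\}$ and
\begin{align*}
\ve^\bi=\begin{pmatrix}0&0&0\\0&0&1\\0&-1&0\end{pmatrix}
\end{align*}
(type $A_1\times A_2$). The cluster variables are $A_0$, $2/A_0$ and the five variables of the type $A_2$ example. Then $\Lambda:=\CV_\bs$ is a filling set; it is exactly the one built in \cref{cor:fixed_point_DT}. Indeed, use $S=\{\pm\mathbf{e}_0,\pm\mathbf{e}_1,\pm\mathbf{e}_2\}$, $\ker\ve^\bi=\bR\mathbf{e}_0$, and the slopes $\bR\mathbf{e}_2$, $\bR\mathbf{e}_1$ of $(1+A_2)/A_1$, $(1+A_1)/A_2$. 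Since $\Lambda$ is $\Gamma_\bs$-invariant, for every finite $G$
\begin{align*}
\lc{L_G}{\bi}(a_0,a_1,a_2)=\max\{a_0,\ \log 2-a_0,\ \ell(a_1,a_2)\},
\end{align*}
where $\ell$ is the function of the $A_2$ example. Its minimum value $c:=\log\frac{1+\sqrt5}{2}$ is attained only at $(c,c)$. As $\log 2-c<c$, the minimum of $L_G$ is $c$, and its minimizers form the nondegenerate segment $\{(a_0,c,c)\mid \log 2-c\leq a_0\leq c\}$.

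At interior points of this segment, the active functions are the five $A_2$-variables. All of them have zero rate along $v=\mathbf{e}_0\in\ker\ve^\bi$, while the balanced condition is witnessed only by the inactive $A_0$ and $2/A_0$. Those inactive functions reach $c$ exactly at the endpoints. So the claim in your fallback is false, and compactness of $M$ yields no contradiction.

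The first half of your first plan fails too. Take trivial $G$, two $A_2$ factors, and $\Lambda$ the cluster variables of one factor together with twice those of the other. Then the minimizer set is $2$-dimensional while $\ker\ve^\bi=0$, so ``$v_\perp=0$'' does not hold.

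The same $A_1\times A_2$ example contradicts \cref{thm:minimum_unique_fiber} as stated. Its Step 2 deduces an active $F$ with $\inprod{\alpha_F}{\beta}>0$ from $0\in\mathrm{Conv}\{\alpha_F\}$. That only gives $\geq 0$, and here every active rate is $0$.
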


\begin{proof}
Follows from \cref{lem:proper_convex}, using the balanced condition of $\Lambda$.
\end{proof}



Although the minimum (value) of $L_G$ is unique, it is possible that its minimizers (the points that realize the minimum) are not unique.
The following gives an analogue of \cite[Theorem 3]{Kerckhoff}: 

\begin{thm}\label{thm:minimum_unique_fiber}
For any filling set $\Lambda$ and finite subgroup $G \subset \Gamma_\bs$, 
the minimizer of the function $L_G$ is unique.
\end{thm}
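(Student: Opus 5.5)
The plan is to argue by contradiction in a fixed log-cluster chart $\log\psi_\bi$, where $\bi$ is the seed for which $\Lambda$ is filling. In this chart $\lc{L_G}{\bi}$ is a finite maximum of convex functions $\lc{L_F}{\bi}$, $F\in\Lambda_G$ (\cref{lem:UCA_convex}). Its minimizing set $M$ is therefore convex, and it is compact because it is a sublevel set (\cref{lem:proper_convex}). Suppose $x\neq y$ both lie in $M$, put $v:=y-x$, and let $c$ be the minimum value. Then $L_G\equiv c$ on the segment $[x,y]$.

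\textbf{Step 1: classify the functions in $\Lambda_G$ along $v$.} By \cref{lem:strict_convex}, each $\lc{L_F}{\bi}$ is either strictly convex along $v$ (when $v\notin\mathrm{Slope}(F|_\bi)^\perp$) or affine along $v$. A strictly convex function that is $\le c$ at $x$ and $y$ is $<c$ at every interior point of the segment. Hence the maximum $c$ at interior points is attained only by the affine ones. An affine function that is $\le c$ on the segment and equals $c$ at an interior point is constant. So the active set $\Lambda_G^{\mathrm{act}}$ at the midpoint $m$ consists of those $F$ with $\inprod{\alpha}{v}=\inprod{\beta}{v}$ for all $\alpha,\beta\in\mathrm{Supp}(F|_\bi)$, and each such $F$ has constant value along the segment. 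Those $F$ therefore satisfy $\inprod{\alpha}{v}=0$ for all $\alpha$ in their support.

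\textbf{Step 2: use the subgradient condition at $m$.} By \cref{prop:min_subgrad}, $0\in\partial L_G(m)$. Since the inactive functions are strictly below $c$ near $m$, $\partial L_G(m)$ is the convex hull of the gradients $\nabla\lc{L_F}{\bi}(m)$ for active $F$. Each such gradient is a positive convex combination of the exponents $\alpha\in\mathrm{Supp}(F|_\bi)$. I would then perturb $m$ within a small neighborhood, in directions transverse to $v$, to upgrade the information from Step 1. The goal is to show that $v$ is orthogonal to every slope of every $F\in\Lambda$, which is equivalent to $v\in\ker\ve^\bi$ by the slope-span condition. This requires using $G$-invariance to move the active functions around: I would transport the whole picture by each $\phi\in G$, which permutes $\Lambda_G$ and maps $M$ to itself.

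\textbf{Step 3: the fiber direction.} Once $v\in\ker\ve^\bi=K^\vee$, the points $x$ and $y$ lie in one fiber of $p$, and $y=\cF_v(x)$. By \eqref{eq:CV_weight} and \cref{rem:CV_flat_direction}, every $L_F$ restricted to the affine fiber $x+K^\vee$ is affine, with linear part given by pairing against a support exponent projected to $(K^\vee)^*$. The balanced condition says that $0$ lies in the interior of $\mathrm{Conv}(S)$. Projecting to $(K^\vee)^*$ keeps $0$ in the interior, and $G$-invariance lets the orbit $\Lambda_G$ contribute all these exponents. Hence $L_G$ restricted to the fiber is a maximum of affine functions whose linear parts have $0$ in the interior of their convex hull. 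Such a function grows strictly in every direction, so it cannot be constant on a nondegenerate segment, which contradicts $x\ne y$.

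The main obstacle is Step 2: showing that the minimizing segment must point in a fiber direction. Functions that are strictly convex along $v$ may simply be inactive, so Step 1 alone does not force this. I would first try Kerckhoff's device from \cite[Theorem 3]{Kerckhoff}. Assume $M$ is not a point and take the $G$-invariant convex set $M$ together with its nonzero tangent directions. Then show that the maximal affine subspace of directions along which all active $\lc{L_F}{\bi}$ are affine is $G$-stable. By slope-span, this subspace must lie inside $K^\vee$, after which Step 3 applies.
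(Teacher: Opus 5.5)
Your Step 1 is correct. Step 2, however, is left open, and it cannot be closed: a minimizing segment need not point along $\ker\ve^\bi$. The slope-span condition is a statement about all of $\Lambda$. The shape of the minimizing set $M$ depends only on the functions active on $M$, and, as you observed, the strictly convex members of $\Lambda_G$ can simply be inactive there. Your ``$G$-stable subspace'' device does not repair this, since $G$ acts nonlinearly in the log chart and no point of $M$ is known to be fixed.

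The paper closes this step by asserting that $L_{G,\beta}$ is strictly convex as soon as one $L_{F,\beta}$ is. That is exactly the inference you distrust, and it is false. Take $\ve^\bi$ with an $A_2$-block on $\{1,2\}$ and an isolated vertex $3$, so that $\ker\ve^\bi=\bR\mathbf{e}_3$. Let $\Lambda$ consist of the five $A_2$ cluster variables together with $100A_3$ and $200/A_3=100A_3'$. This $\Lambda$ is a filling set. For $G=\{1\}$ the minimum of $L_G$ is $\log(100\sqrt2)$. It is attained on the whole two-dimensional region $\{\ell(a_1,a_2)\le\log(100\sqrt2),\ a_3=\log\sqrt2\}$, where $\ell$ is the function of the type $A_2$ example. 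This region contains segments in the $\mathbf{e}_1$-direction, which is not in $\ker\ve^\bi$.

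Your Step 3 contains a separate error, and the paper's Step 2 makes the same one. A maximum of affine functions whose linear parts have $0$ in the interior of their convex hull is proper, but it can still be constant on a segment. At a minimizer only the active pieces count. From $0\in\mathrm{Conv}\{\alpha_F\}$ over the active $F$ you get only some $\inprod{\alpha_F}{\beta}\ge 0$, not $>0$.

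To see this concretely, stay in the same rank-$3$ mutation class and take the paper's own DT filling set. Here it is $\Lambda=\CV_\bs$, the seven cluster variables including $2/A_3$. Let $G$ be trivial or $G=\Gamma_\bs$. Then $\lc{L_G}{\bi}=\max\{\ell(a_1,a_2),a_3,\log2-a_3\}$. Put $\varphi=\frac{1+\sqrt5}{2}$; since $\log\sqrt2<\log\varphi$, the minimum $\log\varphi$ is attained on the entire fiber segment $\{(\log\varphi,\log\varphi)\}\times[\log(2/\varphi),\log\varphi]$. So the statement itself is false as formulated, and no completion of your plan can succeed.

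What survives is the fixed-point theorem that this statement is used for. Consider the $G$-invariant function $\sum_{F\in\Lambda_G}F$. In a log chart it is a positive combination of exponentials $e^{\inprod{\alpha}{x}}$ whose exponents include $S$. By the balanced condition alone it is strictly convex and proper, so its unique minimizer is a $G$-fixed point.
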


\begin{proof}
Suppose $g_0,g_1 \in \A_\bs(\pos)$ are minimizers of $L_G$, and let $u_i:=\log\psi_\bi(g_i) \in \bR^I$ for $i=1,2$. Consider the direction vector $\beta:=u_1 - u_0 \in \bR^n$, and suppose $\beta \neq 0$.

\smallskip
\paragraph{\textbf{Step 1: Restriction to the fiber direction.}}
We claim $\beta \in K^\vee=\ker \ve^\bi$.
Consider the functions 
\begin{align*}
    L_{F,\beta}(t):=\lc{L_F}{\bi}(u_0+t\beta)\quad \mbox{and} \quad L_{G,\beta}(t):=\max\{L_F(t) \mid F \in \Lambda_G\}
\end{align*}
of $t \in \bR$. We also have the coordinate-free definition $L_{F,\beta}(t)=L_F(\cF_{t\beta}(g_0))$. 

First observe that $L_{G,\beta}(t)$ is strictly convex if $L_{F,\beta}(t)$ is strictly convex for some $F \in \Lambda_G$. 
If $\beta \notin \mathrm{Slope}(F|_\bi)^\bot$, then by \cref{lem:strict_convex}, $L_{F,\beta}(t)$ is strictly convex. It contradicts to the assumption that $g_0,g_1$ are both minimizers of $L_G$. Therefore we have
\begin{align*}
    \beta \in \bigcap_{F \in \Lambda} \mathrm{Slope}(F|_\bi)^\bot = \ker \ve^\bi 
\end{align*}
by the slope-span condition of $\Lambda$. 
Geometrically, it means $p(g_0)=p(g_1) \in \X_\bs(\pos)$.

\smallskip
\paragraph{\textbf{Step 2: Variation along the fiber direction}.}
We claim that $L_{G,\beta}(t)$ is strictly increasing. 
Since we have $\beta \in K^\vee$, let us consider the function 
\begin{align*}
    L_F^{\mathrm{fiber}}: K^\vee \to \bR, \quad L_F^{\mathrm{fiber}}(\beta):=L_F(\cF_\beta(g_0)),
\end{align*}
and let $L_G^\mathrm{fiber}:=\max\{L_F^\mathrm{fiber} \mid F \in \Lambda_G\}$. 

For each $F \in \Lambda_G$, we write
\begin{align*}
    L_F= \inprod{\alpha_F}{x} + \log \bigg(c_\alpha + \sum_{\gamma \in \mathrm{Supp}(F)}c_\gamma e^{\inprod{\gamma-\alpha_F}{x}}\bigg).
\end{align*}
Here, note that $\gamma-\alpha_F \in \mathrm{Slope}(F|_\bi) \subset (\ker \ve^\bi)^\bot$, and hence the second term is contant along the direction of $K^\vee$. Therefore $L_F^{\mathrm{fiber}}$ is an affine function with slope $\alpha_F$. 

Let $\Lambda_G(g_0):= \{ F \in \Lambda_G \mid L_F(g_0)=L_G(g_0)\}$ be the ``active'' subset at $g_0$. 
The subgradient set of $L_G^\mathrm{fiber}$ at $0$ is computed as \cite[Example D.3.4]{FundamentalConvex} 
\begin{align*}
    \partial L_G^\mathrm{fiber}(0) = \mathrm{Conv}(\{\alpha_F \mid F \in \Lambda_G(g_0)\}).
\end{align*}
Since $g_0$ is a minimizer, it follows from \cref{prop:min_subgrad} that $\partial L_G^\mathrm{fiber}(0)$ contains $0$. In particular, there exists $F \in \Lambda_G(g_0)$ such that $\inprod{\alpha_F}{\beta} >0$. It implies that $L_{F,\beta}(t)=L_F^\mathrm{fiber}(t\beta)$ increases linearly for all $t>0$. Hence $L_{G,\beta}(t)$ increases as well. Then we have $L_G(g_1) > L_G(g_0)$, which contradicts to the assumption that $g_0,g_1$ are both minimizers of $L_G$. Thus we conclude that $g_0=g_1$. 


\end{proof}


\begin{thm}[Nielsen realization]\label{thm:fixed_point_filling}
Assume that there exists a filling set $\Lambda \subset \UCA^+_\bs$. 
Then, any finite subgroup $G \subset \Gamma_\bs$ has a fixed point in $\A_\bs(\pos)$. In particular, it has a fixed point in $p(\A_\bs(\pos)) \subset \X_\bs(\pos)$.
\end{thm}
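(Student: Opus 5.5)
The plan is the standard Kerckhoff argument: produce a $G$-invariant function with a unique minimizer, and observe that the minimizer must be fixed. Take the function $L_G=\max\{L_F \mid F\in\Lambda_G\}$ with $\Lambda_G=\bigcup_{\phi\in G}\phi^\ast\Lambda$, constructed above from the filling set $\Lambda$. Since $G$ is finite and $\Lambda$ is finite, $\Lambda_G$ is a finite subset of $\UCA^+_\bs$. Two facts have already been established. By the preceding lemma, which rests on \cref{lem:proper_convex} and the balanced condition, $L_G$ attains its minimum on $\A_\bs(\pos)$. By \cref{thm:minimum_unique_fiber}, the minimizer is unique; call it $g_\ast$.

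Next I check $G$-invariance. For $\psi\in G$, we have $\psi^\ast\Lambda_G=\bigcup_{\phi\in G}\psi^\ast\phi^\ast\Lambda=\bigcup_{\phi\in G}(\phi\psi)^\ast\Lambda=\Lambda_G$, since $\phi\mapsto\phi\psi$ is a bijection of $G$. Here I use that pull-back is contravariantly functorial, with the convention \eqref{eq:def_action}, and that $\psi^\ast\log F=\log\psi^\ast F$. Therefore
\begin{align*}
L_G(\psi(g))=\max_{F\in\Lambda_G}L_F(\psi(g))=\max_{F\in\Lambda_G}L_{\psi^\ast F}(g)=L_G(g)
\end{align*}
for all $g\in\A_\bs(\pos)$.

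Now fix $\psi\in G$. Invariance gives $L_G(\psi(g_\ast))=L_G(g_\ast)=\min L_G$, so $\psi(g_\ast)$ is also a minimizer. Uniqueness then forces $\psi(g_\ast)=g_\ast$. Since $\psi$ was arbitrary, $g_\ast$ is a $G$-fixed point in $\A_\bs(\pos)$.

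For the second assertion, use that the ensemble map $p$ is $\Gamma_\bs$-equivariant. This gives $\psi(p(g_\ast))=p(\psi(g_\ast))=p(g_\ast)$ for every $\psi\in G$, so $p(g_\ast)\in p(\A_\bs(\pos))\subset\X_\bs(\pos)$ is a $G$-fixed point.

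The only step needing care is the bookkeeping in the invariance check. One must make sure $\phi^\ast$ preserves $\UCA^+_\bs$, which holds because $\Gamma_\bs$ permutes seeds and hence permutes the Laurent rings intersected in the definition, preserving positivity. One must also make sure the filling hypothesis is used only through the two quoted results, which are stated for the chart $\bi$ in which $\Lambda$ is filling. The real difficulty of the theorem lies in the uniqueness statement \cref{thm:minimum_unique_fiber}, which is already available; the remaining argument is formal.
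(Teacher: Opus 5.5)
Your reduction is the paper's proof. It uses $G$-invariance of $L_G$, uniqueness of its minimizer via \cref{thm:minimum_unique_fiber}, then equivariance of $p$, and your invariance bookkeeping is correct. The gap is in the ingredient that you, like the paper, treat as already available: \cref{thm:minimum_unique_fiber} is false as stated. So the step ``uniqueness then forces $\psi(g_\ast)=g_\ast$'' is not justified.

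Take type $A_1\times A_2$, i.e.\ $I=\{0,1,2\}$ with $\ve_{12}=-\ve_{21}=1$ and all other entries $0$. Let $\Lambda$ be the set of its seven cluster variables. This is exactly the filling set produced from this seed in the proof of \cref{cor:fixed_point_DT}, and it is $\Gamma_\bs$-stable, so $\Lambda_G=\Lambda$ for every $G$. In the log-chart, $L_G=\max\{a_0,\ \log 2-a_0,\ L'(a_1,a_2)\}$, where $L'$ is the type $A_2$ function of the paper's example; its unique minimum is $c:=\log\frac{1+\sqrt5}{2}$, attained at $a_1=a_2=c$. Since $\log 2<2c$, every point with $a_1=a_2=c$ and $\log 2-c\le a_0\le c$ is a minimizer. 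So the minimizers form a segment in the fiber direction $\ker\ve^\bi=\bR\mathbf{e}_0$. Adjoining a large constant $N\in\UCA^+_\bs$ to any filling set gives an even cheaper counterexample.

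In the paper's proof, Step 2 breaks because $0\in\mathrm{Conv}\{\alpha_F\}$ only yields an active $F$ with $\inprod{\alpha_F}{\beta}\ge 0$, not $>0$; in the example every active $F$ has $\inprod{\alpha_F}{\beta}\le 0$. Step 1 also relies on the false principle that a maximum of convex functions is strictly convex as soon as one of them is (consider $\max\{t^2,1\}$).

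The statement itself is true, and the rest of your argument goes through unchanged, if you replace the maximum by $\Phi_G:=\log\sum_{F\in\Lambda_G}F$.
This function is $G$-invariant because each $\psi^\ast$ permutes the finite set $\Lambda_G$, as you showed.
In the chart $\bi$, the sum $\sum_{F\in\Lambda_G}F|_\bi$ is a single Laurent polynomial with positive integer coefficients. Positive coefficients cannot cancel, so its support is $\bigcup_{F\in\Lambda_G}\mathrm{Supp}(F|_\bi)\supset S$ (recall $\Lambda\subset\Lambda_G$).
Hence \cref{lem:proper_convex} with $r=1$ gives compact sublevel sets.
Because $\mathrm{Conv}(S)$ is full-dimensional, the differences of elements of $S$ span $\bR^n$. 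So the slope of this polynomial is all of $\bR^n$, and $\lc{\Phi_G}{\bi}$ is strictly convex by \cref{lem:strict_convex}.
Thus $\Phi_G$ has exactly one minimizer, which is $G$-fixed by your argument, and its image under $p$ is fixed in $\X_\bs(\pos)$.
This uses only the balanced condition; the slope-span condition is not needed.
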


\begin{proof}
By the $G$-invariance of $L_G$, the group $G$ acts on the set of its minimizers. By \cref{thm:minimum_unique_fiber}, the unique minimizer $g_0$ gives a $G$-fixed point in $\A_\bs(\pos)$. By the $\Gamma_\bs$-equivariance of the ensemble map, its image $p(g_0) \in \X_\bs(\pos)$ is also a fixed point. 
\end{proof}

\begin{cor}\label{cor:fixed_point_DT}
Assume that $\Gamma_\bs$ admits a cluster DT transformation. Then, any finite subgroup $G \subset \Gamma_\bs$ has fixed poins in $\A_\bs(\pos)$ and $\X_\bs(\pos)$.
\end{cor}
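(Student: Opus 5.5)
By \cref{thm:fixed_point_filling}, it suffices to exhibit a filling set $\Lambda \subset \UCA^+_\bs$ whenever $\Gamma_\bs$ admits a cluster DT transformation. I will take $\Lambda$ to consist of the initial cluster variables $A_j^\bi$ together with the cluster variables $A_j^{\bi^\ast}$ of the terminal seed $\bi^\ast$, with $C^{\bi^\ast;\bi}=-\mathrm{Id}$. Both families lie in $\CV_\bs \subset \UCA^+_\bs$ by \cref{thm:Laurent_positive}, and $\Lambda$ is finite.

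\textbf{Balanced condition.} In the chart $\psi_\bi$, the function $A_j^\bi|_\bi$ is the monomial $X^{e_j}$, so its support contains $e_j$. By \cref{thm:tropical_duality}, $G^{\bi^\ast;\bi}=D(-\mathrm{Id})^{-1}D^{-1}=-\mathrm{Id}$, so the $g$-vector of $A_j^{\bi^\ast}$ is $-e_j$. The separation formula \eqref{eq:separation_formula} writes $A_j^{\bi^\ast}|_\bi=(A^\bi)^{-e_j}\,F(p^\ast\mathbf{X}^\bi)$ with $F$ having constant term $1$. Hence the monomial $X^{-e_j}$ occurs with coefficient $1$ and cannot cancel, since all coefficients are positive. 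Therefore $-e_j\in\mathrm{Supp}(A_j^{\bi^\ast}|_\bi)$. Take $S=\{\pm e_j\}_{j\in I}$: its convex hull is the cross-polytope, which is full-dimensional with $0$ in its interior.

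\textbf{Slope-span condition.} The inclusion $\mathrm{Slope}(F|_\bi)\subset(\ker\ve^\bi)^\perp$ holds for every cluster variable by \cref{lem:CV_Span}. For equality of the span, observe that $\mathrm{Slope}(A_j^{\bi^\ast}|_\bi)$ is spanned by the differences of exponents of the monomials $(A^\bi)^{-e_j}\prod_k (p^\ast X_k^\bi)^{m_k}$ appearing. These are the vectors $\sum_k m_k \ve_k$, where $\ve_k$ denotes the $k$-th row of $\ve^\bi$, as $m$ ranges over the exponent vectors in $\mathrm{Supp}(F^\bi_{A_j^{\bi^\ast}})$. So I need the supports of these $F$-polynomials to span $\bR^I$. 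Then the row span of $\ve^\bi$, which equals $(\ker\ve^\bi)^\perp$, is obtained.

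\textbf{Main obstacle: the $F$-polynomial supports.} I will use that the $F$-polynomial of a cluster variable has a unique maximal-degree monomial, $y^{f}$, whose exponent is governed by the $C$-/$G$-data. For the terminal seed, the $F$-polynomial of $A_j^{\bi^\ast}$ is expected to have highest term $y_j$ (more generally the column $f$ satisfies $f = [-c]_+$-type relations). Concretely, I would first try to use the symmetry $F$-invariance under the DT transformation, or directly invoke the known fact that for a reddening sequence the $F$-polynomials of terminal cluster variables have top monomial $y^{e_j}$, up to the $D$-twist. Since the constant term $1$ also lies in the support, $e_j$ is a difference of support exponents. Granting this, the slopes contain $\ve_j$ for each $j$, which span the row space, and the slope-span condition follows.

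If this route fails, the fallback is to compute the leading term via tropicalization. The $\bi^\ast$-coordinates are obtained by iterated mutations, and the tropical limit in the direction $-e_j$ is controlled by $C^{\bi^\ast;\bi}=-\mathrm{Id}$, which forces the exponent $\ve_j$ to appear. With both conditions verified, \cref{thm:fixed_point_filling} yields fixed points in $\A_\bs(\pos)$ and in $p(\A_\bs(\pos))\subset\X_\bs(\pos)$.
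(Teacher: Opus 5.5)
Your balanced-condition argument is correct and coincides with the paper's: take $S=\{\pm e_j\}$ from the initial and terminal clusters. One small correction: when $\ve^\bi$ is degenerate, the coefficient of $X^{-e_j}$ is only guaranteed to be $\geq 1$, not $=1$, but positivity is all you need.

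\textbf{The gap is the slope-span condition.} You reduce it to the claim that the $F$-polynomial of $A_j^{\bi^\ast}$ has top monomial $y^{e_j}$. This is false already in type $A_2$. For $\ve^\bi=\begin{pmatrix}0&1\\-1&0\end{pmatrix}$ one has $p^\ast X_1=A_2$ and $p^\ast X_2=A_1^{-1}$, and the terminal cluster is $\{(1+A_2)/A_1,\,(1+A_1+A_2)/(A_1A_2)\}$ with $g$-vectors $-e_1,-e_2$. Moreover
\[
\frac{1+A_1+A_2}{A_1A_2}=A_2^{-1}\bigl(1+p^\ast X_2+p^\ast X_1\,p^\ast X_2\bigr),
\]
so $F=1+y_2+y_1y_2$ has top monomial $y_1y_2$. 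Since $D=\mathrm{Id}$ here, no twist helps.

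What you actually need is different and weaker: the slopes of the terminal $F$-polynomials must span $\bR^I$ modulo $\ker\,(\ve^\bi)^\top$. It would suffice, for instance, that $y_j$ itself occurs in $F^\bi_{A_j^{\bi^\ast}}$. That does hold in the example. In the skew-symmetric case it can plausibly be extracted from additive categorification, with $y_j$ coming from the simple socle of the injective Jacobian module $I_j$. But it is a nontrivial fact that you neither prove nor correctly cite. The tropicalization fallback does not supply it either: nothing in it explains why $C^{\bi^\ast;\bi}=-\mathrm{Id}$ should force two support exponents of $A_j^{\bi^\ast}|_\bi$ to differ by exactly the row $\ve_j$.

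The paper avoids $F$-polynomials entirely by enlarging $\Lambda$ to include the clusters of the $n$ adjacent seeds $\bi_k=\mu_k(\bi)$. The new variable $A_k'=A_k^{-1}\bigl(\prod_j A_j^{[\ve_{kj}]_+}+\prod_j A_j^{[-\ve_{kj}]_+}\bigr)$ is a binomial in the $\bi$-chart. Its two exponent vectors differ by $[b_k]_+-[-b_k]_+=b_k$, the $k$-th row of $\ve^\bi$, so $\mathrm{Slope}(A_k'|_\bi)=\bR b_k$ and the rows of $\ve^\bi$ span $(\ker\ve^\bi)^\perp$ directly. Adjoining further cluster variables preserves the inclusion $\mathrm{Slope}(F|_\bi)\subset(\ker\ve^\bi)^\perp$ (\cref{lem:CV_Span}) and leaves your $S$ available. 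So your proof is repaired simply by adding these $n$ variables to $\Lambda$. The terminal seed is then used only for the balanced condition, exactly as in the paper.
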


\begin{proof}
Fix a seed $\bi \in \bs$, and consider the terminal seed $\bi^\ast$. We claim that 
\begin{align*}
    \Lambda:=\mathbf{A}^{\!\bi} \cup \mathbf{A}^{\!\bi_1}\cup \dots \mathbf{A}^{\!\bi_n}\cup \mathbf{A}^{\!\bi^\ast}
\end{align*}
gives a filling set. To verify the balanced condition, we first note $\mathbf{e}_k \in \mathrm{Supp}(A_k^\bi|_{\bi})$ for each $k=1,\dots,n$, where $\mathbf{e}_k$ denotes the unit vector. Moreover, by the separation formula (\cref{thm:separation}) and the tropical duality (\cref{thm:tropical_duality}), we have $-\mathbf{e}_k = \mathbf{g}(A_k^{\bi^\ast}) \in \mathrm{Supp}(A_k^{\bi^\ast}|_{\bi})$. 
Hence the balanced condition holds with $S:=\{\pm\mathbf{e}_k \mid k=1,\dots,n\}$. 

The inclusion $\mathrm{Slope}(A|_{\bi}) \subset (\ker \ve^\bi)^\bot$ holds for all $A \in \Lambda$ by \cref{lem:CV_Span}. 
By the formula \eqref{eq:A-transf}, we see that 
\begin{align*}
    \mathrm{Slope}(A'_k|_{\bi})= \mathrm{span}_\bR \{b_k\} ,
\end{align*}
where $b_k:=(\ve_{kj})_{j \in I}$ is the $k$-th row vector of the exchange matrix. Therefore all the row vectors of $\ve^\bi$ appears, and hence the slope-span condition holds. 

Then the assertion follows from \cref{thm:fixed_point_filling}.
\end{proof}

\begin{figure}[ht]
    \centering
    \includegraphics[width=0.5\linewidth]{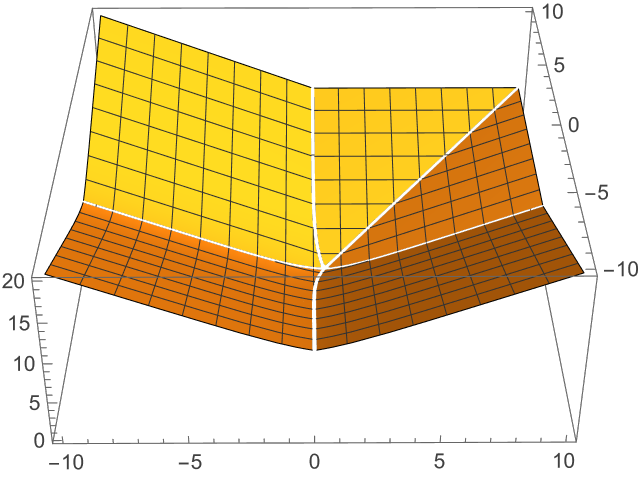}
    \caption{The function $L_G$ for type $A_2$.}
    \label{fig:A2_max_length}
\end{figure}

\begin{ex}[Type $A_2$]
Consider the mutation class $\bs$ containing the exchange matrix
\begin{align*}
    \ve^\bi=\begin{pmatrix}
    0 & 1 \\ -1 & 0
\end{pmatrix}.
\end{align*}
The set of all the cluster variables
\begin{align*}
    \Lambda:= \bigg\{ A_1,A_2, \frac{1+A_2}{A_1},\frac{1+A_1}{A_2},\frac{1+A_1+A_2}{A_1A_2} \bigg\}
\end{align*}
gives a filling set, which is invariant under $G=\Gamma_\bs=\bZ/5\bZ$.
\cref{fig:A2_max_length} shows the 3D plot of the function
\begin{align*}
    \lc{L_G}{\bi}(a_1,a_2) = \max\{ a_1,a_2, \log(1+e^{a_1})-a_2, \log(1+e^{a_2})-a_1, \log(1+e^{a_1}+e^{a_2})-a_1-a_2\}
\end{align*}
in the range $-10 \leq a_1,a_2 \leq 10$. It attains the unique minimum $\log\frac{1+\sqrt{5}}{2}$ at $a_1=a_2=\log\frac{1+\sqrt{5}}{2}$.
\end{ex}


\subsection{Nielsen realization theorem for finite mutation types}

\begin{prop}[Once-punctured surfaces]\label{prop:once-punctured}
Let $\bs=\bs_\Sigma$ be the mutation class associated with a once-punctured closed surface $\Sigma$ of genus $g \geq 1$ \cite{FST}. Then, any finite subgroup $G \subset \Gamma_\bs$ has fixed points in $\A_\bs(\pos)$ and $\X_\bs(\pos)$.
\end{prop}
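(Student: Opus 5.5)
The plan is to apply \cref{thm:fixed_point_filling}: it suffices to exhibit a filling set $\Lambda \subset \UCA^+_\bs$ for $\bs=\bs_\Sigma$. In this case the exchange matrix of a tagged triangulation $\bi$ of $\Sigma$ (with the single puncture $p$) has a nontrivial kernel. Moreover, since there is no terminal seed, we cannot take the negatives $-\mathbf{e}_k$ from $g$-vectors of cluster variables as in \cref{cor:fixed_point_DT}. Instead, I would enlarge $\Lambda$ by elements of $\UCA^+_\bs$ coming from Teichm\"uller theory: the trace functions (bracelet/loop elements) $F_\gamma$ associated with simple closed curves $\gamma$ on $\Sigma$. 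In Penner coordinates these are Laurent polynomials in the $\lambda$-lengths with positive coefficients; in the once-punctured case, plain arcs and notched arcs coincide up to the tagging issue, so the cluster variables are the $\lambda$-lengths $A_e$ of ideal arcs. The loop elements are known to lie in $\UCA^+_\bs$ by Musiker--Schiffler--Williams positivity, or via the theta-function/bangles basis of \cite{GHKK}.

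\textbf{Slope-span.} I take $\Lambda \supset \mathbf{A}^{\!\bi} \cup \bigcup_k \mathbf{A}^{\!\mu_k(\bi)}$. Exactly as in the proof of \cref{cor:fixed_point_DT}, $\mathrm{Slope}(A'_k|_\bi)=\mathrm{span}_\bR\{b_k\}$, so the rows of $\ve^\bi$ span $(\ker\ve^\bi)^\bot$. For the added loop elements, the inclusion $\mathrm{Slope}(F_\gamma|_\bi)\subset (\ker \ve^\bi)^\bot$ holds because $F_\gamma$ is invariant under the fiber flow $\cF_{t\beta}$ along $\ker\ve^\bi$: the trace of holonomy depends only on the underlying hyperbolic structure, i.e.\ on $p(g)$, and $\ker\ve^\bi$ is spanned by the horocycle rescaling at the puncture (together with possible extra kernel directions that also fix shears). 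Thus $F_\gamma|_\bi$ is homogeneous of degree $0$ in the fiber direction. By the argument of \cref{lem:CV_Span}, this gives the inclusion for these elements too.

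\textbf{Balanced.} I expect this to be the main obstacle. The $\lambda$-lengths give $\mathbf{e}_k \in \mathrm{Supp}(A_k^\bi|_\bi)$, and I need support vectors with negative entries whose convex hull, together with these, contains $0$ in its interior. For a simple closed curve $\gamma$, Penner's formula expresses $F_\gamma$ as a sum over the crossing sequence of terms like $\frac{A_a A_b}{A_e A_{e'}}$, products of ratios. Its leading monomials have exponent vectors with entries $-1$ on arcs crossed twice or more in a suitable pattern. I would try to show that, for a filling family of curves $\gamma_1,\dots,\gamma_m$ crossing every arc of $\bi$, the monomial $\prod_e A_e^{-i(\gamma,e)}\cdot(\text{positive part})$ appears. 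Combined with the $+\mathbf{e}_k$, the exponent vectors should then span a full-dimensional cone whose convex hull contains $0$ in its interior.

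If this direct bookkeeping is messy, there is a cleaner route. I would take the homogenizing element $\Delta=\prod_e A_e$ times the sum $\sum_e$ of the $h$-lengths at the puncture (Penner's horocycle length $h=\sum_{\text{corners}} \frac{A_c}{A_aA_b}$, which lies in $\UCA^+_\bs$ because it is mutation-invariant and is a positive Laurent polynomial in every chart). Its monomials $\mathbf{e}_c-\mathbf{e}_a-\mathbf{e}_b$ over all corners of all triangles, together with the $\mathbf{e}_k$, can be checked to contain $0$ in the interior of their convex hull. The reason is that summing $\mathbf{e}_c-\mathbf{e}_a-\mathbf{e}_b$ over all $4g-2$ triangles' corners gives $-\sum_e \mathbf{e}_e$, since each arc appears twice as an opposite side and four times as an adjacent side, and adding positive multiples of the $\mathbf{e}_k$ then yields interior points. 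Since $h$ has the same form in every seed, this verification is uniform. Once both conditions hold, \cref{thm:fixed_point_filling} gives fixed points in $\A_\bs(\pos)$ and, via $p$, in $\X_\bs(\pos)$.
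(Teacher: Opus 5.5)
Your fallback route, once repaired, is exactly the paper's proof. Penner's total horocycle length $h=\sum_{\text{corners}}A_c/(A_aA_b)$ at the unique puncture is the Goncharov--Shen potential $W$. The paper takes $\Lambda=\mathbf{A}^{\!\bi}\cup\mathbf{A}^{\!\bi_1}\cup\dots\cup\mathbf{A}^{\!\bi_n}\cup\{W\}$ and $S=\{\mathbf{e}_j\}_j\cup\mathrm{Supp}(W|_\bi)$, and gets balancedness because the support vectors of $W|_\bi$ add up to a negative multiple of $\mathbf{1}=(1,\dots,1)$, just as you do.

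Your primary route through loop elements cannot work, for the following reason. Here $\ker\ve^\bi=\bR\mathbf{1}$ exactly; there are no extra kernel directions. Every cluster variable is homogeneous of degree $1$ under $\cF_{t\mathbf{1}}$, so all their support vectors satisfy $\inprod{\alpha}{\mathbf{1}}=1$; this includes those of the $A'_k$. By your own argument each trace function $F_\gamma$ is $\cF_{t\mathbf{1}}$-invariant. Since $F_\gamma$ has positive coefficients, this forces $\inprod{\alpha}{\mathbf{1}}=0$ for all its support vectors: the negative exponents you expect in Penner's formula are always compensated inside the same monomial. Hence every admissible $S$ lies in the closed half-space $\{\inprod{\alpha}{\mathbf{1}}\geq 0\}$, and $0$ is never an interior point of $\mathrm{Conv}(S)$. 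This is the same half-space obstruction the paper points out for type $X_7$. Balancedness needs an element of negative degree along the fiber, and $h$, of degree $-1$, is exactly that.

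For the same reason, drop the ``homogenizing element''. The product $\Delta\cdot h$ with $\Delta=\prod_e A_e$ is homogeneous of degree $n-1>0$. Then all support vectors would satisfy $\inprod{\alpha}{\mathbf{1}}\geq 1$, and $0$ would not lie in $\mathrm{Conv}(S)$ at all.

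With $h$ itself, everything you wrote goes through, with three remarks.
\begin{enumerate}
\item $h\in\UCA^+_\bs$ because the seeds are the ideal triangulations and Penner's formula has the same shape in each; the paper simply cites Goncharov--Shen for this.
\item All monomials of $h|_\bi$ have degree $-1$. This gives the inclusion $\mathrm{Slope}(h|_\bi)\subset\mathbf{1}^\perp=(\ker\ve^\bi)^\perp$, which you did not check for $h$.
\item The corner sum is $-2\sum_e\mathbf{e}_e$, not $-\sum_e\mathbf{e}_e$, since each arc contributes $2-4$. This is harmless: $\sum_{\text{corners}}(\mathbf{e}_c-\mathbf{e}_a-\mathbf{e}_b)+2\sum_k\mathbf{e}_k=0$ writes $0$ as a combination of all points of $S$ with strictly positive coefficients. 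So $0$ is an interior point of the full-dimensional set $\mathrm{Conv}(S)$.
\end{enumerate}
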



\begin{proof}
This is a typical case where $\Gamma_\bs$ does not admit a cluster DT transformation. 
Consider the potential function $W \in \UCA^+_\bs$ associated to the unique puncture \cite{GS18}. We claim that
\begin{align*}
    \Lambda:=\mathbf{A}^{\!\bi} \cup \mathbf{A}^{\!\bi_1}\cup \dots \mathbf{A}^{\!\bi_n}\cup \{W\},
\end{align*}
is a filling set, where $\bi \in \bs$ denote the seed associated with an ideal triangulation $\tri$. The potential function is expressed as
\begin{align*}
    W|_{\bi} = \sum_{t} \bigg(\frac{A^{(t)}_1}{A^{(t)}_2 A^{(t)}_3}+\frac{A^{(t)}_2}{A^{(t)}_3 A^{(t)}_1}+\frac{A^{(t)}_3}{A^{(t)}_1 A^{(t)}_2} \bigg),
\end{align*}
where $t$ runs over all triangles of $\tri$, and $\{A_1^{(t)},A_2^{(t)},A_3^{(t)}\}$ denotes the cluster variables associated to the three sides of $T$. Since $\ker \ve^\bi=\mathrm{span}_\bR \{(1,\dots,1)\}$, 
the slope-span condition is verified similarly to the proof of \cref{cor:fixed_point_DT}. 
Since we have $\sum_{\alpha \in \mathrm{Supp}(W|_\bi)} \alpha = (-1,\dots,-1)$, the balanced condition holds with $S:=\{\mathbf{e}_j \mid j=1,\dots,n\} \cup \mathrm{Supp}(W|_\bi)$. Therefore the assertion holds by \cref{thm:fixed_point_filling}.
\end{proof}


\begin{thm}\label{thm:fixed_point_finite_mutation}
Let $\bs$ be any mutation class of finite mutation type, except for type $X_7$. Then, any finite subgroup $G \subset \Gamma_\bs$ has fixed points in $\A_\bs(\pos)$ and $\X_\bs(\pos)$. 
\end{thm}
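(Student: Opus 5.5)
The plan is a case analysis along the Felikson--Shapiro--Tumarkin classification of mutation classes of finite mutation type. Each case is reduced either to \cref{cor:fixed_point_DT} (existence of a cluster DT transformation) or to \cref{prop:once-punctured}. By that classification, a mutation class $\bs$ of finite mutation type is one of the following: of rank $\leq 2$; of the form $\bs_\Sigma$ for a marked surface $\Sigma$ (in the skew-symmetrizable case, an orbifold); or one of finitely many exceptional classes. The exceptional classes are $E_6,E_7,E_8$, $\widetilde{E}_6,\widetilde{E}_7,\widetilde{E}_8$, $E_6^{(1,1)},E_7^{(1,1)},E_8^{(1,1)}$, $X_6$, $X_7$, and in the skew-symmetrizable case $G_2$-, $F_4$-types and their affine and elliptic versions.

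First, I handle the rank $\leq 2$ classes and all the Dynkin, affine and elliptic exceptional types. Rank $\leq 2$ admits a cluster DT transformation by item (1) of the list in the introduction. The finite, affine and doubly-extended (elliptic) types, including the non-simply-laced ones, are all $T_{\mathbf n,\mathbf w}$-quiver types \cite{KG24} and so admit cluster DT transformations. For $X_6$, I would cite the known existence of a maximal green sequence (equivalently, of a reddening sequence). The quiver $X_7$ is precisely the known exceptional case with no reddening sequence, which explains its exclusion. For each of these classes, \cref{cor:fixed_point_DT} then yields the fixed points.

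Second, I treat the surface classes $\bs_\Sigma$. By item (4), with $G=SL_2$, the mutation class $\bs_\Sigma$ admits a cluster DT transformation for every marked surface except a once-punctured closed surface. This uses the Goncharov--Shen result and the known reddening sequences for surfaces with boundary or with at least two punctures. The remaining case of a once-punctured closed surface of genus $g\geq 1$ is exactly \cref{prop:once-punctured}. The orbifold (skew-symmetrizable) surface classes are handled the same way: either I quote existence of reddening sequences for them, or for the once-punctured closed orbifold I build a filling set from $\mathbf{A}^{\!\bi}\cup\mathbf{A}^{\!\bi_1}\cup\dots\cup\mathbf{A}^{\!\bi_n}\cup\{W\}$ with the puncture potential $W$. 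The balanced and slope-span conditions are checked by the same computation as in \cref{prop:once-punctured}, and then \cref{thm:fixed_point_filling} applies.

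The main obstacle is bookkeeping: making sure every class in the classification, especially the skew-symmetrizable orbifold ones and $X_6$, is covered by a published existence result for cluster DT transformations. If some orbifold once-punctured case is not covered in the literature, I would directly verify the filling-set conditions. The slope-span condition comes from the one-step mutated variables, whose slopes are the rows of $\ve^\bi$. The balanced condition uses the monomials of $W|_\bi$, whose exponents sum to a negative multiple of a vector that is positive in every coordinate, together with the $\mathbf e_j$, so that $0$ lies in the interior of the convex hull.
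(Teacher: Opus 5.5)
Your plan is essentially the paper's proof: a case analysis over the Felikson--Shapiro--Tumarkin classification that sends each class to \cref{cor:fixed_point_DT} or \cref{prop:once-punctured}. The paper compresses your skew-symmetric bookkeeping (rank $\leq 2$, exceptional types, $X_6$, surfaces) into the single citation \cite[Theorem 7.4]{Mills}, which says a cluster DT transformation exists unless $\bs$ comes from a once-punctured closed surface or is of type $X_7$; it then settles the non-skew-symmetric case in one step via \cite[Theorem 5.13]{FeST-2} and \cite[Theorem 5.1 and Fig.~13]{KG24}, which identify the listed classes with $T_{\mathbf{n},\mathbf{w}}$-quivers. The paper has no separate orbifold step. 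Your orbifold paragraph, as written, is only a contingency: you name no source, and you assert rather than check that an orbifold puncture potential lies in $\UCA^+_\bs$ with exponents of the required sign pattern. Note also that a maximal green sequence implies a reddening sequence but not conversely; only that direction is needed, so the "equivalently" in your $X_6$ case is wrong but harmless.
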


\begin{proof}
Recall that the mutation classes of finite mutation type are classfied in \cite{FeST-1} for skew-symmetric case, and in \cite{FeST-2} in the skew-symmetrizable case.

In the skew-symmetric case, the cluster modular group $\Gamma_\bs$ admits a cluster DT transformation except for the following cases \cite[Theorem 7.4]{Mills}:
\begin{itemize}
    \item mutation class $\bs_\Sigma$ associated with a once-punctured closed surface $\Sigma$. 
    \item mutation class of type $X_7$. See \cref{fig:X_7} for the representing quiver.
\end{itemize}
Hence the skew-symmetric cases are covered by \cref{cor:fixed_point_DT} and \cref{prop:once-punctured}, except for $X_7$. 

In the skew-symmetrizable case, all the mutation classes of quivers listed in \cite[Theorem 5.13]{FeST-2} admit a cluster DT transformation. Indeed, \cite[Theorem 5.1 and Fig. 13]{KG24} shows all these quivers are mutation-equivalent to $T_{\mathbf{n},\mathbf{w}}$-quivers for some $\mathbf{n}$ and $\mathbf{w}$. Any $T_{\mathbf{n},\mathbf{w}}$-quiver admits a cluster DT tranformation \cite[Theorem 4.14]{KG24}. Again by \cref{cor:fixed_point_DT}, the assertion holds for skew-symmetrizable cases. 

\end{proof}

\begin{figure}[ht]
    \centering
\begin{tikzpicture}
\draw(0,0) circle(2pt) coordinate(A0);
\foreach \i in {1,2,3,4,5,6} {
    \draw(-\i*60-120:2) circle(2pt) coordinate(A\i);
    \node at (-\i*60-120:2.3) {\scriptsize $\i$};
}
\node at (0,0.5){\scriptsize $0$};
\foreach \i in {1,3,5} \qarrow{A0}{A\i};
\foreach \i in {2,4,6} \qarrow{A\i}{A0};
\draw[qarrow,transform canvas={yshift=-0.12em}] (A5) -- (A6);
\draw[qarrow,transform canvas={yshift=0.12em}] (A5) -- (A6);
\draw[qarrow,transform canvas={xshift=-0.12em,yshift=0.06em}] (A1) -- (A2);
\draw[qarrow,transform canvas={xshift=0.12em}] (A1) -- (A2);
\draw[qarrow,transform canvas={xshift=0.12em,yshift=0.06em}] (A3) -- (A4);
\draw[qarrow,transform canvas={xshift=-0.12em}] (A3) -- (A4);
\end{tikzpicture}
    \caption{A quiver in the mutation class of type $X_7$.}
    \label{fig:X_7}
\end{figure}
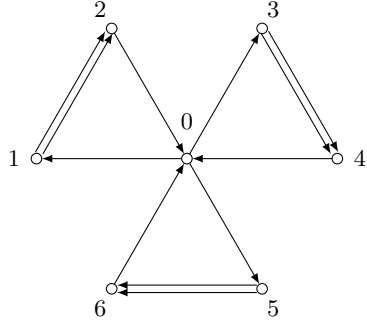

\begin{rem}\label{rem:X_7}
Only the remaining case of finite mutation type is $X_7$. We have only two quivers in this mutation class, one of which is shown in \cref{fig:X_7}. 
All the support vectors $\alpha$ of any cluster variables belong to the half-space defined by $\inprod{\alpha}{\beta} >0$, where $\beta:=(2,1,1,1,1,1,1)$ is the generator of $\ker \ve^\bi$.   

We have $K_j \in \UCA_\bs^+$ for $j=1,3,5$ given by
\begin{align*}
    K_j|_\bi:= \frac{A_j^2 + A_{j+1}^2+ A_0}{A_j A_{j+1}},
\end{align*}
whose support vectors belong to the same half-plane. To the author's knowledge, these are only elements in $\UCA_\bs \setminus \CA_\bs$ known at present (cf. \cite[Theorem 3.1]{Mills-UCA}). It implies that the balanced condition does not hold for any finite subset $\Lambda \subset \CV_\bs \cup \Gamma_\bs(\{K_1,K_3,K_5\})$, and hence the associated function $L_G$ is not bounded below. For example, we can directly see $\log F(\cF_{t\beta}(g_0)) \to -\infty$ as $t \to -\infty$
for all $F \in \Lambda$. 

Therefore, the strategy based on the Kerckhoff's argument seems not to work in this example. The author does not know if the Nielsen realization theorem holds for type $X_7$ or not.
\end{rem}

\subsection{Nielsen realization theorem for the cluster realization of Weyl groups}
In this subsection, we provide another strategy to prove the Nielsen realization theorem for cluster realizations of Weyl groups given in \cite{IIO}. 

Recall from \cite{IIO} that for any skew-symmetrizable Kac--Moody Lie algebra $\mathfrak{g}$ and an integer $m \geq 2$, we can construct a quiver $Q_m(\mathfrak{g})$ together with a group embedding 
\begin{align*}
    \phi_m: W(\mathfrak{g}) \to \Gamma_{\bs_m(\mathfrak{g})}
\end{align*}
of the Weyl group $W(\mathfrak{g})$, where $\bs_m(\mathfrak{g})$ is the mutation class containing $Q_m(\mathfrak{g})$. 

Recall the Coxeter presentation
\begin{align*}
    W(\mathfrak{g})= \langle r_s\ (s \in S) \mid (r_s r_u)^{m_{su}}=1\ (s,u \in S)\rangle,
\end{align*}
where $S$ is the vertex set of the Dynkin diagram of $\mathfrak{g}$, and $(m_{su})$ is a symmetric matrix satisfying $m_{ss}=1$ for all $s \in S$, other entries being related to the Cartan matrix $(C_{su})$ of $\mathfrak{g}$ as in the following table:

\begin{table}[ht]
\begin{tabular}{rccccc}
  $C_{su}C_{us}$ : & $0$ & $1$ & $2$ & $3$ & $\geq 4$  \\
  $m_{su}$ : & $2$ & $3$ & $4$ & $6$ & $\infty$. 
\end{tabular}
\label{tab:Coxeter}
\end{table}
The Weyl group $W(\mathfrak{g})$ is finite if and only if $\mathfrak{g}$ is of finite type, namely a finite-dimensional semisimple Lie algebra. 

The quiver $Q_m(\mathfrak{g})$ has the vertex set $I=\{v_i^s \mid i \in \bZ/m\bZ,\ s \in S\}$. It depends on a choice of \emph{Coxeter quiver} with vertex set $S$, having an exchange matrix $\ve=(\ve_{su})_{s,u \in S}$ such that $|\ve_{su}|=-C_{us}$ for $s \neq u$. Roughly speaking, $Q_m(\mathfrak{g})$ has an $m$-cycle for each $s \in S$, together with appropriate connection by arrows among them. 

By \cite[Theorem 3.10 (1)]{IIO}, the action of $\phi_m(r_s) \in \Gamma_{\bs_m(\mathfrak{g})}$ on the initial cluster variables is given as follows:
\begin{align}\label{eq:Weyl_action}
    \phi_m(r_s)^\ast A_j^u = \begin{cases}
        P_s \cdot A_j^u & \mbox{if $u=s$}, \\
        A_j^u & \mbox{if $u\neq s$}.
    \end{cases}
\end{align}
Here, $A_j^u:=A_{v_j^u}$ is the initial cluster variable associated with $v_j^u \in I$, and 
\begin{align}\label{eq:potential}
    P_s:= \sum_{j \in \bZ_m} \frac{1}{A_j^s A_{j+1}^s} \prod_{u \in S} (A_j^u)^{[-\ve_{su}]_+}(A_{j+1}^u)^{[\ve_{su}]_+}
\end{align}
is called the \emph{potential function} associated with $s \in S$. 

\begin{lem}\label{lem:action_peripheral}
The group $\phi_m(W(\mathfrak{g}))$ acts on $p(\A_{\bs_m(\mathfrak{g})})$ trivially, and acts on $K^\vee$ faithfully.
\end{lem}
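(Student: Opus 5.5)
The plan is to work entirely in the initial seed $\bi$ of $Q_m(\mathfrak{g})$ and use the explicit formula \eqref{eq:Weyl_action}. Each generator $\phi_m(r_s)$ acts on the initial cluster chart by rescaling the variables $A^s_j$, $j\in\bZ/m\bZ$, by the common factor $P_s$ and fixing all other initial variables. In log-coordinates this is the translation $a^s_j\mapsto a^s_j+\log P_s$ for $j \in \bZ/m\bZ$. The idea is to show that this rescaling is exactly a fiber flow $\cF_{\beta}$ with $\beta\in K^\vee=\ker\ve^\bi$. The fiber flows generate the fibers of $p$, and the ensemble map $p$ is $\Gamma_\bs$-equivariant, so this already gives triviality of the action on $p(\A_{\bs_m(\mathfrak{g})}(\pos))$.

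\textbf{Step 1: triviality on $p(\A)$.} Let $\mathbf{1}_s\in\bR^I$ be the indicator vector of the $m$-cycle $\{v^s_j\mid j\in\bZ/m\bZ\}$. I would check that $\mathbf{1}_s\in\ker\ve^\bi$, i.e.\ $\sum_j \ve_{k,v^s_j}=0$ for every vertex $k$. Equivalently, each $p^\ast X_k=\prod_l A_l^{\ve_{kl}}$ is invariant under the simultaneous rescaling of the $s$-cycle. For a vertex $k=v^s_i$ on the cycle, this is the balance of the incoming arrow from $v^s_{i-1}$ and the outgoing arrow to $v^s_{i+1}$. For vertices $k=v^u_i$ with $u\neq s$, I would read off from the construction in \cite{IIO} that the arrows between the $u$-cycle and the $s$-cycle come in cancelling pairs, namely $|\ve_{su}|$ arrows in and $|\ve_{su}|$ arrows out. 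This is the one place where the explicit shape of $Q_m(\mathfrak{g})$ enters, and I expect it to be the main bookkeeping obstacle. Granting it, we get $\phi_m(r_s)(g)=\cF_{\log P_s(g)\,\mathbf{1}_s}(g)$, so $p\circ\phi_m(r_s)=p$ on $\A_{\bs_m(\mathfrak{g})}(\pos)$. Since the $r_s$ generate $W(\mathfrak{g})$, the first claim follows.

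\textbf{Step 2: the induced representation on $K^\vee$.} Next I would compute the image of $r_s$ under \eqref{eq:rep_fiber}. By \eqref{eq:fiber_action} the action map $K^\vee\times\A\to\A$ is $\Gamma$-equivariant, so comparing $\phi_m(r_s)\circ\cF_\beta$ with $\cF_{\beta}\circ\phi_m(r_s)$ on initial variables should give the linear map on coordinates. Using $\cF_\beta^\ast P_s=e^{\langle w_s,\beta\rangle}P_s$, where $w_s$ is the weight of $P_s$ read off from \eqref{eq:potential}, the expected result is
\begin{align*}
\beta\mapsto \beta+\langle w_s,\beta\rangle\,\mathbf{1}_s .
\end{align*}
Here $\langle w_s,\mathbf{1}_s\rangle=-2$ and $\langle w_s,\mathbf{1}_u\rangle=\sum_u(|\ve_{su}|)=-C_{us}$ up to the skew-symmetrizer conventions. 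On $V:=\mathrm{span}\{\mathbf{1}_s\}\subset K^\vee$, this map is the reflection $\mathbf{1}_u\mapsto\mathbf{1}_u-C_{su}\mathbf{1}_s$ (or its transpose version), which is the geometric (Tits) representation of $W(\mathfrak{g})$ or its contragredient.

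\textbf{Step 3: faithfulness.} The Tits representation of a Coxeter group, and its dual, is faithful. Hence $W(\mathfrak{g})$ acts faithfully on the invariant subspace $V$, and a fortiori on $K^\vee$. The only delicate point is if the $\mathbf{1}_s$ were linearly dependent, but they have disjoint supports, so they are independent. If the weight computation in Step 2 produced a degenerate pairing, I would instead compose the representation with the dual pairing against the $w_s$ and invoke faithfulness of the contragredient geometric representation.
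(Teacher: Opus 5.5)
Your proof is correct, but it takes a different route from the paper's. The paper computes nothing here. It cites \cite[Lemma 3.12]{IIO} for the triviality on $p(\A_{\bs_m(\mathfrak{g})}(\pos))$. It then gets faithfulness on $K^\vee$ in one line from the faithfulness of $\Gamma_\bs$ on $\A_\bs(\pos)$ (\cref{thm:faithful}) and the equivariance of \eqref{eq:fiber_action}.

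You instead reprove the first part directly from \eqref{eq:Weyl_action}. Once $\mathbf{1}_s\in K^\vee$, one has $\phi_m(r_s)(g)=\cF_{\log P_s(g)\mathbf{1}_s}(g)$, which visibly preserves the fibers of $p$. The membership $\mathbf{1}_s\in K^\vee$ is exactly \cite[Theorem 3.13 (2)]{IIO}, which the paper uses in the proof of \cref{lem:Weyl_finite}, so you can cite it rather than redo the arrow bookkeeping. For the second part you compute the linear action on the invariant subspace $V=\mathrm{span}_\bR\{\mathbf{1}_s\}$. With the paper's convention $|\ve_{su}|=-C_{us}$ this is $\mathbf{1}_u\mapsto \mathbf{1}_u-C_{us}\mathbf{1}_s$, and you get faithfulness from reflection-group theory.

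What your route buys is an explicit mechanism. Trivial action on the base plus trivial action on $K^\vee$ only forces an element to act by fiberwise translations $g\mapsto \cF_{b(p(g))}(g)$. The paper's one-line deduction tacitly needs such an element to be the identity, and this does not follow from equivariance alone. Your argument never has to rule this out, and it even gives faithfulness on the $|S|$-dimensional subspace $V$. The paper's route, in exchange, is short and uses no Coxeter theory.

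One correction to Step 3: for general $\mathfrak{g}$ the action on $V$ is not literally Tits' geometric representation. It is the Kac--Moody reflection representation on the span of the simple roots, for $C$ or its transpose. It agrees with Tits' representation, after rescaling the basis, only when all $C_{su}C_{us}\le 4$. The fact you actually need is the standard one from Kac's book: if $\ell(wr_s)<\ell(w)$, then $w(\alpha_s)$ is a negative root. So for $w\neq 1$, pick such an $s$. Then $w(\mathbf{1}_s)$ is a nonpositive combination of the linearly independent $\mathbf{1}_u$, hence $w(\mathbf{1}_s)\neq\mathbf{1}_s$. With this, your fallback remark about a degenerate pairing is unnecessary.
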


\begin{proof}
The first statement is proved in \cite[Lemma 3.12]{IIO}. Then the second statement follows by \cref{thm:faithful} and the equivariance of \eqref{eq:fiber_action}.
\end{proof}
In particular, the Nielsen realization theorem holds obviously for $\X_{\bs_m(\mathfrak{g})}(\pos)$. Indeed, any point in $p(\A_{\bs_m(\mathfrak{g})})$ gives a fixed point of $\phi_m(W(\mathfrak{g}))$.


\begin{rem}
It is proved in \cite[Theorem 3.20 (1)]{IIO} that any reduced expression $w=r_{s_1}\dots r_{s_m} \in W(\mathfrak{g})$ gives rise to a green sequence. In particular, if $W(\mathfrak{g})$ is infinite, then the quiver $Q_m(\mathfrak{g})$ does not admit a maximal green sequence, since we have arbitrary long green sequences. Therefore, it is very likely that $\Gamma_{\bs_m(\mathfrak{g})}$ does not admit a cluster DT transformation.
\end{rem}

\begin{thm}\label{thm:Weyl_fixed_point}
Let $\mathfrak{g}$ be a skew-symmetrizable Kac--Moody Lie algebra, and $m \geq 2$ an integer. Then, for any finite subgroup $G \subset W(\mathfrak{g})$, $\phi_m(G)$ has a fixed point in $\A_{\bs_m(\mathfrak{g})}(\pos)$. 
\end{thm}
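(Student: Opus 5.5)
The plan has two stages, as announced in the introduction. First, treat the case where $\mathfrak{g}$ is of finite type by constructing a section of the ensemble map that is equivariant under $\phi_m(W(\mathfrak{g}))$. Second, reduce the general case to this one via Tits' theorem.

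\textbf{Reduction via Tits' theorem.} By Tits' theorem, any finite subgroup $G \subset W(\mathfrak{g})$ is conjugate into a spherical (finite) standard parabolic subgroup $W_J = \langle r_s \mid s \in J\rangle$ with $J \subset S$. Write $G \subset w W_J w^{-1}$. If $g$ is a fixed point of $\phi_m(W_J)$, then $\phi_m(w)(g)$ is fixed by $\phi_m(wW_Jw^{-1}) \supset \phi_m(G)$. So it suffices to find a point of $\A_{\bs_m(\mathfrak{g})}(\pos)$ fixed by every $\phi_m(r_s)$ with $s \in J$.

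\textbf{Local model and key observation.} By \eqref{eq:Weyl_action}, $\phi_m(r_s)$ only rescales the variables $A^s_j$, $j \in \bZ/m\bZ$, by the common factor $P_s$. The point $g$ is fixed by $r_s$ if and only if $P_s(g)=1$, because $P_s>0$ and the action multiplies all $A^s_j$ by $P_s$. Here $P_s$ means evaluation at $g$; since $r_s$ is an involution, $P_s \cdot (P_s\circ r_s)=1$ is automatic, but we need $P_s(g)=1$ itself. The fixed-point set of $W_J$ is therefore
\[
\{g \in \A(\pos) \mid P_s(g)=1 \text{ for all } s\in J\}.
\]
I would show this set is nonempty by using the scaling structure. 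For fixed values of the variables $A^u_j$ with $u\notin J$, set $A^s_j = \lambda_s$ for $s \in J$, constant in $j$. By \eqref{eq:potential}, $P_s$ then becomes a positive monomial-type expression
\[
P_s = c_s\,\lambda_s^{-2}\prod_{u\in J,\,u\neq s}\lambda_u^{|\ve_{su}|},
\]
with $c_s>0$ determined by the other variables. Taking logarithms, the conditions $P_s=1$ become the linear system
\[
2x_s - \sum_{u \in J\setminus\{s\}} |\ve_{su}|\, x_u = \log c_s, \qquad s\in J,
\]
where $x_s=\log\lambda_s$. The coefficient matrix is the transposed Cartan matrix $C_J^\top$ of the finite-type subdiagram $J$, using $|\ve_{su}|=-C_{us}$. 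It is invertible because $C_J$ is of finite type, so a solution exists and the fixed point is found.

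\textbf{Main obstacle.} The main risk is verifying the exact exponents: each summand of $P_s$ has denominator $A^s_jA^s_{j+1}$ and numerator involving $A^u_j$ or $A^u_{j+1}$ with exponent $|\ve_{su}|$. After setting the $J$-variables constant in $j$, the $c_s$ must be independent of the $\lambda$'s. This holds because $[-\ve_{su}]_+ + [\ve_{su}]_+ = |\ve_{su}|$, so each summand carries total $\lambda_u$-exponent $|\ve_{su}|$. If a subtlety arises (for instance a sign convention making the matrix $2I + |\ve|$ instead), I would fall back on the Perron--Frobenius/positivity of $C_J^{-1}$ for finite type, which also ensures solvability. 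This is the equivariant section of the finite-type case: its dependence on the $u\notin J$ coordinates is continuous and $W_J$-fixed.
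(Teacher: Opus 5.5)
Your proof is correct and follows the paper's route. Tits' theorem reduces the problem to a spherical parabolic $W_J$, and the fixed locus $\{P_s=1,\ s\in J\}$ is reached by rescaling the $s$-blocks of initial variables (the paper's fiber flows $\cF_{t\beta_s}$), which turns the conditions into a linear system with invertible matrix $C_J^\top$. Your version is slightly more explicit than the paper's, since you run the scaling argument for $C_J$ directly inside $Q_m(\mathfrak{g})$, absorbing the $u\notin J$ variables into the constants $c_s$, instead of citing the finite-type lemma for $W_T$.
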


\begin{lem}\label{lem:Weyl_finite}
\cref{thm:Weyl_fixed_point} holds true for $\mathfrak{g}$ of finite type.
\end{lem}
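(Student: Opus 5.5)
The plan is to use the affine structure on a single fiber of the ensemble map $p$ and average over an orbit, so that $\phi_m(G)$ acts on that fiber by affine transformations of the vector space $K^\vee$. Since $\mathfrak{g}$ is of finite type, $W(\mathfrak{g})$ is finite, and it suffices to find a fixed point of the whole group $\phi_m(W(\mathfrak{g}))$; write $G:=W(\mathfrak{g})$ and identify it with its image under $\phi_m$.

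\emph{Step 1: the orbit lies in one fiber.} Fix any point $g_0 \in \A_{\bs_m(\mathfrak{g})}(\pos)$. By \cref{lem:action_peripheral}, $G$ acts trivially on $p(\A_{\bs_m(\mathfrak{g})}(\pos))$, so $p(w(g_0))=p(g_0)$ for all $w\in G$. By the description of the fibers of $p$ (\cite[Lemma 2.10]{FG09}, recalled in the paper), there is therefore a vector $c(w)\in K^\vee$ with $w(g_0)=\cF_{c(w)}(g_0)$; concretely $c(w)=\log\psi_\bi(w(g_0))-\log\psi_\bi(g_0)$. In the log-chart $\log\psi_\bi$ the flow $\cF_\beta$ is translation by $\beta$. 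Hence $K^\vee$ acts freely on $\A_{\bs_m(\mathfrak{g})}(\pos)$, and $c(w)$ is uniquely determined.

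\emph{Step 2: a cocycle identity.} The action map \eqref{eq:fiber_action} is $\Gamma_\bs$-equivariant, which means $w(\cF_\beta(g))=\cF_{w\beta}(w(g))$, where $w\beta$ is the linear representation \eqref{eq:rep_fiber}. Using this, one computes
\begin{align*}
\cF_{c(wv)}(g_0)=wv(g_0)=w(\cF_{c(v)}(g_0))=\cF_{wc(v)}(w(g_0))=\cF_{wc(v)+c(w)}(g_0).
\end{align*}
By freeness, this gives the crossed-homomorphism identity $c(wv)=w\,c(v)+c(w)$.

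\emph{Step 3: averaging.} Set $\beta:=\frac{1}{|G|}\sum_{v\in G}c(v)\in K^\vee$ and $g_\ast:=\cF_\beta(g_0)$. For $w\in G$, equivariance and the cocycle identity give
\begin{align*}
w(g_\ast)=\cF_{w\beta+c(w)}(g_0),\qquad w\beta+c(w)=\frac{1}{|G|}\sum_{v\in G}\bigl(w\,c(v)+c(w)\bigr)=\frac{1}{|G|}\sum_{v\in G}c(wv)=\beta.
\end{align*}
Hence $w(g_\ast)=g_\ast$, so $g_\ast$ is a $G$-fixed point.

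The formula $g_\ast$ also defines a $W(\mathfrak{g})$-equivariant section of $p$ over $p(\A)$, by applying the construction to each base point. This matches the strategy announced in the introduction.

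The only step needing care is Step 2: checking precisely the equivariance $w\circ\cF_\beta=\cF_{w\beta}\circ w$ with the conventions \eqref{eq:def_action} and the mutation rule for $\beta$, including that the action on $K^\vee$ is linear rather than merely affine. I would verify this by a direct computation in a single mutation step, using \eqref{eq:CV_weight}. Everything else is formal.

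The argument seems to use only the finiteness of the acting group, not finite type of $\mathfrak{g}$. If so, it would also give \cref{thm:Weyl_fixed_point} directly for any finite $G \subset W(\mathfrak{g})$, so I would double-check Step 1 against \cref{lem:action_peripheral} in that generality.
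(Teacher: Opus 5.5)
Your proof is correct, and it takes a genuinely different route from the paper's.

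\textbf{The paper's route.} It works with the explicit formula \eqref{eq:Weyl_action}: a point is fixed by $\phi_m(W(\mathfrak{g}))$ if and only if $P_s(g)=1$ for all $s\in S$. The flows along $\beta_s\in K^\vee$ rescale the potentials by $P_u\mapsto e^{-tC_{us}}P_u$, so invertibility of the finite-type Cartan matrix lets one solve these equations simultaneously. The general case is then reduced to finite type by Tits' theorem on spherical parabolic subgroups.

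\textbf{Your route.} You use only three facts: the orbit lies in a single fiber of $p$; this fiber is a $K^\vee$-torsor on which $\Gamma_\bs$ acts by affine maps with linear part \eqref{eq:rep_fiber}; and a finite group of affine maps fixes the barycenter of any orbit. Concretely, $\log\psi_\bi(g_\ast)$ is just the average of the $\log\psi_\bi(v(g_0))$, $v\in G$.

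The step you single out is a one-line check. Setting $(w\beta)^\bi_j:=\beta^{w^{-1}(\bi)}_j$, \eqref{eq:def_action} gives $A^\bi_j(w(\cF_\beta(g)))=e^{(w\beta)^\bi_j}A^\bi_j(w(g))$. Linearity comes from the linear mutation rule for $\beta$, so \eqref{eq:CV_weight} is not needed.

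Your closing suspicion is also right. \cref{lem:action_peripheral} holds for arbitrary $\mathfrak{g}$, so your argument proves \cref{thm:Weyl_fixed_point} directly for every finite $G\subset W(\mathfrak{g})$, with no need for Tits' theorem or the Cartan matrix. More generally, it shows that any finite $G\subset\Gamma_\bs$ fixing a point of $p(\A_\bs(\pos))$ has a fixed point in $\A_\bs(\pos)$.

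\textbf{What the paper's route buys.} It gives explicit information: in finite type, the fixed locus of all of $W(\mathfrak{g})$ is cut out by the equations $P_s=1$. The same potential-function bookkeeping also explains the obstruction in affine type $A$ (\cref{rem:Weyl_section}).

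\textbf{A small correction to your remark on sections.} Replacing $g_0$ by $\cF_\gamma(g_0)$ moves $g_\ast$ by the $G$-average of $\gamma$. To obtain a well-defined continuous section, you must therefore choose $g_0$ via a fixed continuous section of $p$, e.g.\ a linear one in a log chart.
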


\begin{proof}
In view of \eqref{eq:Weyl_action}, the assertion is equivalent to the existence of a point $g \in \A_{\bs_m(\mathfrak{g})}(\pos)$ satisfying $P_s(g)=1$ for all $s \in S$. We will work on the initial cluster where the action is described as in \eqref{eq:Weyl_action}, and omit the superscript $\bi$. 

For each $s \in S$, we have a vector $\beta_s=(\beta_j) \in K^\vee$ given by $\beta_{v_i^u}=\delta_{s,u}$ \cite[Theorem 3.13 (2)]{IIO}. From the homogeneity of the expression \eqref{eq:potential}, we see that the associated flow $\cF_{t\beta_s}$ rescales the potential functions as 
\begin{align*}
    P_u \mapsto e^{-tC_{us}} P_u.
\end{align*}
Since the Cartan matrix $C$ is invertible, for each $s \in S$, we may find a linear combination $\beta_s^\ast=\sum_{u \in S} a_u\beta_u \in K^\vee$ whose associated flow only rescales $P_s$ and keeps other potentials intact. Indeed, such a vector $\mathbf{a}=(a_u) \in \bR^S$ is given by $\mathbf{a}= - C^{-1}\cdot \mathbf{e}_s$, where $\mathbf{e}_s$ is the unit vector. By using the flows $\cF_{t\beta_s^\ast}$ for $s \in S$, we can change the values of potentials independently along the fiber direction. Thus we find a point $g \in \A_{\bs_m(\mathfrak{g})}(\pos)$ satisfying $P_s(g)=1$ for all $s \in S$. 
\end{proof}

\begin{rem}\label{rem:Weyl_section}
\begin{enumerate}
    \item The proof shows that the locus 
    \begin{align*}
        \{g \in \A_{\bs_m(\mathfrak{g})}(\pos) \mid P_s(g)=1,\ s \in S\}
    \end{align*}
    defines a $W(\mathfrak{g})$-equivariant section of the ensemble map $p: \A_{\bs_m(\mathfrak{g})}(\pos) \to \X_{\bs_m(\mathfrak{g})}(\pos)$.
    \item In the affine type $A$ case, there are no points $g \in \A_{\bs_m(\mathfrak{g})}(\pos)$ satisfying $P_s(g)=1$ for all $s\in S$. Indeed, it is easy to show that $\prod_{s \in S}P_s(g) > 1$ on $\A_{\bs_m(\mathfrak{g})}(\pos)$. It implies that there are no fixed points of the Weyl group $W(\mathfrak{g})$ (which is infinite) in this case. Since the Weyl group acts on the potentials as
    \begin{align*}
        \phi_m(r_u)^\ast P_s = P_s P_u^{-C_{us}}
    \end{align*}
    for $s,u \in S$, this fact also shows that the ensemble map does not admit any $W(\mathfrak{g})$-equivariant section. 
\end{enumerate}
\end{rem}

\begin{proof}[Proof of \cref{thm:Weyl_fixed_point}]
It is proved by Tits that any finite subgroup of a Coxeter group is contained in a spherical parabolic subgroup (see, for instance, \cite[Corollary D.2.9]{Davis}). Namely, there exists a subset $T \subset S$ and $w \in W(\mathfrak{g})$ such that $W_T$ is finite and $w G w^{-1} \subset W_T$. Here,
$W_T \subset W(\mathfrak{g})$ denotes the subgroup generated by $r_t$ for $t \in T$. By applying \cref{lem:Weyl_finite} to $W_T$, we find a fixed point $g \in \A_{\bs_m(\mathfrak{g})}(\pos)$ of $\phi_m(W_T)$. Then, $\phi_m(w)(g) \in \A_{\bs_m(\mathfrak{g})}(\pos)$ gives a fixed point of $\phi_m(G)$.
\end{proof}

\appendix

\section{Additional details for the fixed point theorem in \texorpdfstring{\cite{Ish19}}{[Ish19]}}\label{app:periodic}
The following result is stated in \cite[Proposition 2.3 (ii)$\Rightarrow$ (iii)]{Ish19}:

\begin{thm}\label{thm:fixed_point_Ish}
Let $\bs$ be any mutation class of seeds. 
Then, any finite-order element $\phi \in \Gamma_\bs$ has fixed points in $\A_\bs(\pos)$ and $\X_\bs(\pos)$. 
\end{thm}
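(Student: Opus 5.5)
The plan is to run the Kerckhoff-type argument of \cref{thm:fixed_point_filling} for the cyclic group $G=\langle\phi\rangle$ of order $h$, replacing the global filling set with one adapted to $G$ and built from the seeds along the orbit of $\phi$. I have not checked that this works: the properness step below is the point most likely to fail, and I do not see a reliable way around it.

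\textbf{Setup.} Fix a seed $\bi\in\bs$, let $\bi_k:=\phi^{-k}(\bi)$ for $k\in\bZ/h\bZ$, and set
\begin{align*}
    \Lambda_\phi:=\bigcup_{k\in\bZ/h\bZ}\Big(\mathbf{A}^{\!\bi_k}\cup\bigcup_{j\in I}\mathbf{A}^{\!\mu_j(\bi_k)}\Big)\subset\CV_\bs .
\end{align*}
This set is $G$-invariant by \eqref{eq:def_action}, so in the notation of \cref{thm:minimum_unique_fiber} we have $\Lambda_G=\Lambda_\phi$, and $L_G=\max_{F\in\Lambda_\phi}L_F$.

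\textbf{Slope-span condition.} This holds exactly as in the proof of \cref{cor:fixed_point_DT}, since the one-step mutated variables $A'_k$ at $\bi$ already contribute every row $b_k$ of $\ve^\bi$.

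\textbf{Convexity and uniqueness.} By \cref{lem:UCA_convex}, $\lc{L_G}{\bi}$ is convex. The proof of \cref{thm:minimum_unique_fiber} uses only convexity, the slope-span condition and the existence of a minimizer. So once we know that $L_G$ attains its minimum, the minimizer is unique and $G$-fixed, and its image under $p$ is a fixed point in $\X_\bs(\pos)$.

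\textbf{The main obstacle: properness.} It remains to show that the sublevel sets of $L_G$ are compact, i.e.\ the balanced condition of \cref{lem:proper_convex} for $S:=\{\mathbf{g}(A)^{\bi}\mid A\in\mathbf{A}^{\!\bi_k},\ k\in\bZ/h\bZ\}$.

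\begin{itemize}
\item The columns of the $G$-matrices $G^{\bi_k;\bi}$ are obtained by iterating the tropical $\A$-transformation $\phi^{\mathsf T}$, a piecewise-linear homeomorphism of $\bR^I$ preserving the $g$-vector fan. Since $\phi^h=1$, $\phi^{\mathsf T}$ has order dividing $h$.
\item I would try to show that the orbit $\{(\phi^{\mathsf T})^k(\mathbf{e}_j)\}_{j,k}$ of the positive orthant's rays is not contained in any closed half-space $\{\inprod{\alpha}{v}\ge 0\}$ with $v\neq 0$.
\item Suppose it were. Then the fixed-point set of $\phi^{\mathsf T}$ would meet the interior of the positive-orthant orbit cone. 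A fixed vector there would give a tropical point fixed by $\phi$ lying in the interior of the cone of some cluster $\bi_k$. This would force $\phi$ to fix that cluster, and then $\phi=1$ by \cref{rem:sync} and \cref{thm:faithful}, unless the orbit is degenerate.
\end{itemize}

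\textbf{Fallback.} If this balanced argument fails in general, as the $X_7$ example in \cref{rem:X_7} suggests it may, I would instead minimize the restriction of $L_G$ to a single fiber of $p$. That restriction is affine along the fiber (see the proof of \cref{thm:minimum_unique_fiber}). Combined with a separate argument producing a $\phi$-fixed point on the base $p(\A_\bs(\pos))$, for instance using $\phi$-invariant potential-type elements as in \cref{prop:once-punctured}, this would reduce the problem to finding a fixed point of the finite-order linear action \eqref{eq:rep_fiber} on the affine fiber. That last step is trivial: average over the group.
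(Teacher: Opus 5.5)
\textbf{There is a genuine gap, in the properness step you flagged.} That step is false in general, not just unproven.

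Take the once-punctured torus, i.e.\ the Markov quiver with double arrows $1\Rightarrow 2\Rightarrow 3\Rightarrow 1$. Every seed in this class has the same shape, so every exchange monomial has total degree $2$. By induction, every cluster variable is homogeneous of degree $1$. Hence $\inprod{\alpha}{\beta}=1$ for every support vector $\alpha$ of every cluster variable, where $\beta=(1,1,1)$ spans $\ker\ve^\bi$. Consequently, for any finite $\Lambda\subset\CV_\bs$ you get $L_G(\cF_{t\beta}(g))=L_G(g)+t\to-\infty$ as $t\to-\infty$, so $L_G$ has no minimizer. Yet the relabeling $\phi=(1\,2\,3)$ preserves $\ve^\bi$ and is a nontrivial element of order $3$ in $\Gamma_\bs$. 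This is why \cref{prop:once-punctured} needs the non-cluster element $W$. For $X_7$, with its rotation $(1\,3\,5)(2\,4\,6)$, \cref{rem:X_7} shows that no known finite subset of $\UCA^+_\bs$ is balanced at all.

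Your argument for the second bullet breaks at ``this would force $\phi$ to fix that cluster, and then $\phi=1$''. A $\phi^{\mathsf T}$-fixed vector in the interior of a $g$-vector cone only shows that $\phi$ preserves the \emph{unlabeled} cluster, i.e.\ acts as a label permutation $\sigma$ with $\sigma(\ve)=\ve$. \cref{rem:sync} and \cref{thm:faithful} give $\phi=1$ only when a \emph{labeled} seed is fixed. In the Markov example, $(1\,2\,3)$ fixes $(1,1,1)$ in the open positive orthant, and there is no contradiction.

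\textbf{The fallback does not close the gap either.} Its last step is fine: $\phi$ acts affinely on the $K^\vee$-torsor $p^{-1}(x)$, so averaging works. But it presupposes a $\phi$-fixed point in $p(\A_\bs(\pos))$, which is the real content of the theorem. Potential-type elements are only available in special situations such as punctured surfaces.

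\textbf{The missing idea is that a single finite-order element needs no convex exhaustion; the paper argues topologically.} Suppose $\phi$ had no fixed point. In a log-cluster chart it extends to a finite-order homeomorphism $s$ of the one-point compactification $S^n$ whose only fixed point is $\infty$. Brown's Lefschetz-type theorem then gives $\mathrm{Lef}(s)=\chi(\{\infty\})=1$, while $\mathrm{Lef}(s)=1\pm 1$ is even.

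Brown's theorem needs each $\mathrm{Fix}(s^k)$ to have finitely generated homology. This is supplied by $o$-minimality: the fixed sets are semialgebraic in cluster charts, hence $\bR_{\exp}$-definable in log-charts and after stereographic compactification, hence triangulable (\cref{prop:finite_cohomology}).

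This argument works for every mutation class, including $X_7$ and once-punctured surfaces. The price is that it handles only cyclic groups, which is all the statement requires. The fixed point in $\X_\bs(\pos)$ comes from the same argument applied to $\X$, or via $p$.
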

We are going to fill a gap in its proof. The author is grateful to Oded Yacobi for pointing out an insufficiency of the argument in \cite{Ish19}. He also thanks Toshiyuki Akita for valuable comments.

Let $\cV$ denote either $\A$ or $\X$. The argument in \cite{Ish19} goes as follows. 
Take an element $\phi \in \Gamma_\bs$ of finite order, and consider its fixed point set
\begin{align*}
    \mathrm{Fix}_\cV(\phi):= \{ g \in \cV_\bs(\pos) \mid \phi(g)=g\}.
\end{align*}
We claim $\mathrm{Fix}_\cV(\phi) \neq \emptyset$. Suppose in contrary that $\mathrm{Fix}_\cV(\phi) = \emptyset$. 

Fixing a seed $\bi \in \bs$, consider the associated log-cluster chart $\log\psi_\bi: \cV_\bs(\pos) \xrightarrow{\sim} \bR^n$. The coordinate expression of $\phi$ gives a real-analytic diffeomorphism $\lc{\phi}{\bi}: \bR^n \to \bR^n$. 
Consider the one-point compactification $S^n = \bR^n \cup \{\infty\}$. Since $\lc{\phi}{\bi}$ is a proper map, it extends continuously to a homeomorphism $\lc{\overline{\phi}}{\bi}: S^n \to S^n$, whose only fixed point is $\infty$ by assumption. We then appeal to the Brown's theorem:

\begin{thm}[Brown {\cite[Theorem 5.1]{Brown}}]
Let $X$ be a paracompact space of finite cohomological dimension, and $s: X \to X$ a homeomorphism of finite order. If $H_\ast(\mathrm{Fix}(s^k);\bZ)$ is finitely generated for each $k \in \bZ$, then 
\begin{align*}
    \mathrm{Lef}(s):=\sum_{i \in \bZ} \mathrm{Tr}(s_\ast: H_i(X;\bZ) \to H_i(X;\bZ)) = \chi(\mathrm{Fix}(s)).
\end{align*}
Here, $\mathrm{Fix}(s^k):= \{x \in X \mid s^k(x)=x\}$.
\end{thm}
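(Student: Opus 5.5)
The statement just above is Brown's theorem, which the paper quotes from the literature. Here is how I would prove it. The idea is to pass to the orbit space and compare traces stalkwise, using sheaf cohomology, which is well behaved for paracompact spaces of finite cohomological dimension.

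\textbf{Setup.} Let $G=\langle s\rangle$ be cyclic of order $m$ and let $\pi: X \to X/G$ be the quotient map. First I replace $\bZ$-coefficients by $\bQ$-coefficients. Traces of $s_\ast$ on the free parts agree, and torsion does not contribute to the trace. All homology groups involved will be finitely generated once the fixed sets are controlled, so this step is harmless. I then work with cohomology, which is dual to homology here.

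\textbf{Transfer to the orbit space.} Since $\pi$ is finite, $\pi_\ast$ is exact, so $H^\ast(X;\bQ)\cong H^\ast(X/G;\pi_\ast\bQ)$, and $s$ acts on the sheaf $\pi_\ast\bQ$. The stalk of $\pi_\ast\bQ$ at an orbit $Gx$ is the permutation representation $\bQ[Gx]$. The trace of $s$ on this stalk is the number of points of the orbit fixed by $s$. It is therefore $1$ if $x\in\mathrm{Fix}(s)$ and $0$ otherwise.

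\textbf{Stratification.} I stratify $X/G$ by orbit type, using the closed sets $\pi(\mathrm{Fix}(s^k))$ for $k \mid m$. Compactly supported cohomology gives long exact sequences for each pair (closed stratum, open complement), and Lefschetz traces are additive along such sequences. The hypothesis that each $H_\ast(\mathrm{Fix}(s^k))$ is finitely generated, together with finite cohomological dimension, is exactly what makes every term finite-dimensional, so all traces are defined. The global trace then becomes a sum over strata. On the stratum $\pi(\mathrm{Fix}(s))$ the sheaf is the constant sheaf $\bQ$ with trivial action, so that stratum contributes $\chi(\mathrm{Fix}(s))$.

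\textbf{Vanishing on the other strata.} On an open stratum $V$ where the isotropy of $s$ is a fixed proper subgroup $H$, the sheaf $\pi_\ast\bQ|_V$ comes from a finite regular covering with group $G/H$. After tensoring with $\mathbb{C}$, it splits into isotypic local systems $L_\chi$ of constant rank, indexed by the characters $\chi$ of $G/H$. The trace of $s$ on that stratum is then $\sum_\chi \chi(s)\,\chi_c(V;L_\chi)$. If $\chi_c(V;L_\chi)$ does not depend on $\chi$, this sum vanishes, because $\sum_\chi\chi(s)=0$ when $s\notin H$.

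\textbf{Main obstacle.} The hard step is proving that $\chi_c(V;L_\chi)$ is independent of $\chi$ for a finite regular covering over a merely paracompact, finite-dimensional base. For finite CW complexes this is the multiplicativity of Euler characteristic under coverings. In general I would first try to get it from the transfer: the Euler characteristic of $\pi_\ast\bQ$ on the covering equals $|G/H|$ times that on the base. Then I would induct on $m$, applying the whole argument to proper subgroups to show that each character class contributes equally. The finite-generation hypotheses on $H_\ast(\mathrm{Fix}(s^k))$ are what should let this induction run, which is why they appear for every $k$ rather than only $k=1$.
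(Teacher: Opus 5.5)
The paper does not prove this statement; it quotes it from Brown. So your sketch has to stand on its own, and it has a genuine gap exactly at the step you call the main obstacle.

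Your bookkeeping is fine: stalkwise traces of $\pi_\ast\bQ$, additivity over orbit-type strata, the splitting $\pi_\ast\mathbb{C}|_V=\bigoplus_\chi L_\chi$, and $\sum_\chi\chi(s)=0$ for $s\notin H$. But it only reduces the theorem to the claim that $\chi(V;L_\chi)$ does not depend on $\chi$. Equivalently, a power of $s$ acting freely on a stratum has Lefschetz number $0$ there, i.e.\ $\chi(\pi^{-1}V)=|G/H|\,\chi(V)$. That is the whole content of the theorem, and the tools you propose cannot deliver it:
\begin{itemize}
\item The transfer identifies $H^\ast(V;\bQ)$ with the $G/H$-invariants. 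That is only the summand $L_1$, and it says nothing about $L_\chi$ for $\chi\neq 1$. The multiplicativity you hope to read off from it does not follow.
\item Induction on $m$ can at best handle the strata with $H\neq 1$, where the action factors through a group of order $<m$. It can also handle the non-generating powers of $s$ on the free stratum. The generator on the free stratum is left exactly as open as before.
\end{itemize}
The decisive point is that none of these ingredients uses finite cohomological dimension, and without it the claim is false. Let $\bZ/m$ act on $S^\infty\subset\mathbb{C}^\infty$ by a primitive $m$-th root of unity. Every other hypothesis holds, since each $\mathrm{Fix}(s^k)$ is empty or contractible, and every step of your reduction goes through. Yet $H^\ast(S^\infty/(\bZ/m);L_\chi)=H^\ast(\bZ/m;\mathbb{C}_\chi)$ is $0$ for $\chi\neq1$ and $\mathbb{C}$ for $\chi=1$. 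Hence $L(s)=1\neq 0=\chi(\mathrm{Fix}(s))$.

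So you need an ingredient that genuinely uses $\dim X<\infty$, together with the finite generation supplied by all the $\mathrm{Fix}(s^k)$. The latter is what rules out examples like Kister's. Possible routes:
\begin{itemize}
\item Smith theory, which gives Floyd's Euler characteristic formula for $\bZ/p$-actions by truncating the Smith sequences in high degrees.
\item An algebraic argument: using freeness, finite dimension and finite generation, show that the cochains of the free part are equivalent over $\bZ G$ to a finite complex of finitely generated projective modules. Their traces at $g\neq 1$ then vanish by Swan's theorem ($P\otimes\bQ$ is $\bQ G$-free).
\end{itemize}

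There are also two smaller issues.
\begin{itemize}
\item For non-compact $X$, compact supports are the wrong tool. Additivity of $\chi_c$ computes the trace on $H^\ast_c(X)$, not on $H^\ast(X)$; already for $s=\mathrm{id}$ on $\bR$ you get $\chi_c=-1\neq 1=\chi$. Moreover, the hypotheses do not make $H^\ast_c$ of the strata finitely generated. Use the relative long exact sequences of the closed invariant filtration by the sets $\mathrm{Fix}(s^k)$ instead.
\item Singular homology is not dual to sheaf cohomology for arbitrary paracompact spaces (e.g.\ the Warsaw circle). A sheaf-theoretic proof therefore gives the statement for sheaf cohomology. That is enough for the paper's application to $S^n$, whose fixed sets are definable and hence triangulable.
\end{itemize}
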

Applying Brown's theorem for $X=S^n$ and $s=\lc{\overline{\phi}}{\bi}$, we get a contradiction since the left-hand side is even, while the right-hand side, the Euler characteristic of a singleton, is $1$. Therefore $\mathrm{Fix}_\cV(\phi) \neq \emptyset$. 

In \cite{Ish19}, the verification of the finiteness of homology groups of $\mathrm{Fix}(s^k)$ was not made explicit. 
Note that there are examples of finite-order homeomorphisms on $\bR^n$ without fixed point \cite{Kister}. 
We state this homological finiteness in a general form, as follows:

\begin{prop}\label{prop:finite_cohomology}
For any $\phi \in \Gamma_\bs$, the fixed point set $\mathrm{Fix}_\cV(\lc{\overline{\phi}}{\bi}) \subset S^n$ of the extended action $\lc{\overline{\phi}}{\bi}: S^n \to S^n$ has finitely generated homology.
\end{prop}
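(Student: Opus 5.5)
Since $\mathrm{Fix}_\cV(\lc{\overline{\phi}}{\bi}) = \mathrm{Fix}_\cV(\phi)\cup\{\infty\}$ inside $S^n$, my plan is to show that $\mathrm{Fix}_\cV(\phi)\subset\bR^n$, in the log-chart $\log\psi_\bi$, is a real semi-algebraic (or at least globally subanalytic) set. Then its one-point compactification is a compact set definable in an o-minimal structure. Such sets admit finite triangulations (the triangulation theorem for o-minimal structures / Łojasiewicz–Hironaka). So the compactified fixed point set is homeomorphic to a finite simplicial complex and has finitely generated homology.

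\textbf{Step 1: Algebraicity in the positive chart.} Pick a representation path of $\phi$. The coordinate expression $\phi|_\bi:\bR_{>0}^n\to\bR_{>0}^n$ is a composition of the mutation formulas \eqref{eq:A-transf} or \eqref{eq:X-transf} with a relabelling. Each component is therefore a subtraction-free rational function $R_j = P_j/Q_j$ with $P_j,Q_j$ positive-coefficient polynomials (Laurent polynomials in the $\A$-case, by \cref{thm:Laurent_phenomenon}). The fixed point set in the positive chart is then
\begin{align*}
\{u\in\bR_{>0}^n \mid P_j(u)-u_jQ_j(u)=0,\ j=1,\dots,n\}
\end{align*}
after clearing denominators, which are positive on $\bR_{>0}^n$. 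This is a semi-algebraic subset of $\bR^n$.

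\textbf{Step 2: Transport to $S^n$ semi-algebraically.} The log-chart is not algebraic, so rather than work with $\log$ I would compactify directly. Fix a semi-algebraic homeomorphism $h:\bR_{>0}^n\to\bR^n$, for instance coordinatewise $u\mapsto u-1/u$. Then compose with the inverse stereographic projection $\bR^n\hookrightarrow S^n\subset\bR^{n+1}$, which is semi-algebraic. The homeomorphism $\log\psi_\bi$ versus $h\circ\psi_\bi$ differs by a self-homeomorphism of $\bR^n$. This extends to a homeomorphism of $S^n$ fixing $\infty$, since both are proper. Hence the pair $(S^n,\mathrm{Fix})$ is homeomorphic to the pair obtained from the semi-algebraic model, namely the closure of the image of the Step 1 set in $S^n$ together with the point $\infty$. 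This set is a compact semi-algebraic subset of $\bR^{n+1}$.

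\textbf{Step 3: Conclusion.} A compact semi-algebraic set is semi-algebraically triangulable by a finite simplicial complex. Therefore its singular homology is finitely generated, which gives the claim. The same argument applies to every power $\phi^k$, as needed in Brown's theorem.

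The main obstacle is Step 2: making sure that adding $\infty$ and passing through a non-algebraic chart change does not spoil finiteness. The key point to check carefully is that the comparison map between the log-chart and the semi-algebraic chart really extends to a homeomorphism of $S^n$ sending one fixed set onto the other. This holds because both charts are homeomorphisms onto $\bR^n$, hence proper, and the fixed set is defined intrinsically on $\cV_\bs(\pos)$.
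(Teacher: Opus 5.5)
Your proof is correct, but it reaches finiteness by a different route from the paper's.

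The paper stays in the log-cluster chart, where the extension $\lc{\overline{\phi}}{\bi}$ is literally defined. It notes that the semialgebraic fixed set becomes $\bR_{\exp}$-definable after taking $\log$. This step needs Wilkie's theorem that $\bR_{\exp}$ is $o$-minimal. It then adds the algebraic stereographic embedding and the point $\infty$, and applies the $o$-minimal triangulation theorem.

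You instead use that only the homeomorphism type of $\mathrm{Fix}_\cV(\phi)\cup\{\infty\}\subset S^n$ matters. This type does not depend on the chosen identification $\cV_\bs(\pos)\cong\bR^n$, because any self-homeomorphism of $\bR^n$ extends to $S^n$ fixing $\infty$, and the fixed set is defined intrinsically. You then replace $\log$ by the semialgebraic homeomorphism $u\mapsto u-1/u$. Everything stays semialgebraic, so Tarski--Seidenberg and the classical \L{}ojasiewicz--Hironaka triangulation of compact semialgebraic sets are enough.

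Your small ``closure together with $\infty$'' description also checks out. $\mathrm{Fix}_\cV(\phi)$ is closed in $\cV_\bs(\pos)$, so this set is exactly the image of the fixed set with $\infty$ added.

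What each approach buys:
\begin{itemize}
\item Yours avoids the deep input from Wilkie at the cost of a one-line chart comparison. It also makes explicit that the statement is coordinate-free, which the paper only points out via the tropical compactification in its remark.
\item The paper's argument works directly with the extension as defined, with no comparison step. Its $o$-minimal framework would also cover maps that are $\bR_{\exp}$-definable but not semialgebraic in any convenient chart.
\end{itemize}
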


\begin{rem}\label{rem:Hironaka}
\begin{enumerate}
    \item Here, we do not essentially need the tropical compactification $\overline{\cV_\bs(\pos)}=\cV_\bs(\pos) \cup \mathbb{P}\cV_\bs(\trop)$ considered in \cite{Ish19}. Nevertheless, it is useful to note that the sphere $S^n$ is obtained from $\overline{\cV_\bs(\pos)}$ by collapsing the boundary to a point, which provides us a coordinate-free description of the fixed point set in \cref{prop:finite_cohomology}.
    \item The fixed point set $\mathrm{Fix}_\cV(\phi) \subset \cV_\bs(\pos)$ viewed in any cluster chart is a real semialgebraic set, since it is defined by finitely many algebraic equations and inequalities coming from the positivity of coordinates. Hence, its homology groups are finitely generated by Hironaka's triangulation theorem \cite{Hironaka}. However, it remains to analysize its behavior ``at infinity''. 
\end{enumerate}
\end{rem}

To prove \cref{prop:finite_cohomology}, we use the framework of \emph{$o$-minimal geometry} that generalizes the real semialgebraic geometry. A concise account is found in \cite{Dries}. See \cite{Coste} for a brief introduction.

We will work over the ordered field $\bR=(\bR,+,\cdot,<)$. A \emph{structure} expanding $\bR$ is a collection $\cS=(\cS_n)_{n \in \mathbb{N}}$ of subsets $\cS_n \subset \mathcal{P}(\bR^n)$ satisfying the following conditions:
\begin{enumerate}
    \item[(S1)] All the algebraic subsets of $\bR^n$ are contained in $\cS_n$.
    \item[(S2)] For each $n \in \bN$, the set $\cS_n$ is closed under unions, intersections and complements. Namely, $\cS_n \subset \mathcal{P}(\bR^n)$ is a Boolean subalgebra.
    \item[(S3)] For each $m,n \in \bN$, $A \in \cS_m$ and $B \in \cS_n$, we have $A \times B \in \cS_{m+n}$.
    \item[(S4)] Let $p:\bR^{n+1} \to \bR^n$ be the projection to the first $n$ components. Then for any $A \in \cS_{n+1}$, we have $p(A) \in \cS_n$.
\end{enumerate}
It is said to be \emph{$o$-minimal} if it further satisfies the condition:
\begin{itemize}
    \item[(S5)] The elements of $\cS_1$ are precisely the finite unions of points and open intervals. 
\end{itemize}
The elements of $\cS_n$ are called \emph{definable subsets} of $\bR^n$ with respect to $\cS$. 
A map $f: A \to \bR^p$ defined on $A \subset \bR^n$ is called a \emph{definable map} if its graph is a definable subset of $\bR^{n+p}$. (It follows that $A$ is definable by the property (S4).) The image of a definable subset under a definable map is again definable. 

The $o$-minimality condition ensures that the definable subsets have ``tame'' topological properties. In particular, we have the following:

\begin{thm}[Triangulation theorem {\cite[Chapter 8, (1.7)]{Dries}}]\label{thm:triangulation}
Any definable subset $A \in \cS_n$ is definably homeomorphic to a polyhedron $|K|$ for some simplicial complex $K$ in $\bR^n$.
\end{thm}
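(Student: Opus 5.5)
The plan follows the standard o-minimal argument of \cite[Chapter 8]{Dries}. Since the theorem is cited there, I only indicate how I would reconstruct it from the axioms (S1)--(S5). I will prove the stronger, inductively usable statement: for any definable $A \subset \bR^n$ and finitely many definable subsets $A_1,\dots,A_k \subset A$, there are a finite simplicial complex $K$ in $\bR^n$, a union $L$ of open simplices of $K$, and a definable homeomorphism $h: A \to L$ such that each $A_i$ is mapped onto a union of open simplices. Here polyhedra are understood as unions of open simplices.

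\textbf{Step 1 (reduction to bounded sets).} Compose with the semialgebraic homeomorphism $x \mapsto x/\sqrt{1+|x|^2}$ of $\bR^n$ onto the open unit ball. This map is definable by (S1)--(S4), so we may assume $A$ is bounded. We may also replace $A$ by its closure $\overline{A}$, adding $A$ and the $A_i$ to the list of distinguished subsets. The closure is again definable, because it is first-order defined from $A$ via (S2)--(S4). So from now on $A$ is compact.

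\textbf{Step 2 (cell decomposition and good direction).} Apply the cell decomposition theorem for o-minimal structures. It is derived from (S5) via the monotonicity theorem and uniform finiteness. It gives a decomposition of $\bR^n$ into cells partitioning $A$ and each $A_i$. Because $A$ is compact, only finitely many bounded cells matter. After a generic linear change of coordinates, which is semialgebraic, every cell of dimension $<n$ becomes \emph{good with respect to $x_n$}: each one is the graph of a definable function over its projection, or a band between two such graphs, and the finitely many boundary functions $f_1<\dots<f_r$ over each base cell extend continuously to the closure of the base cell.

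\textbf{Step 3 (induction on $n$).} Let $\pi:\bR^n\to\bR^{n-1}$ be the projection. By induction, triangulate $\pi(A)$ compatibly with the projections of all cells and with the loci where $f_j = f_{j+1}$. Over each closed simplex $\sigma$ of the base, the functions $f_j\circ h^{-1}$ are then continuous on $\sigma$ and weakly ordered. Triangulate the region between consecutive graphs by a standard construction: take the vertices $(v, f_j(v))$ for $v$ a vertex of $\sigma$, use the lexicographic order of vertices to cut each prism-like region into simplices, and map it to the region between the graphs by fiberwise affine interpolation in $x_n$. The resulting map is definable and a homeomorphism onto $A$, and it respects all distinguished subsets.

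\textbf{Main obstacle.} I expect the hardest part to be Step 2. One must guarantee that the boundary functions over each base simplex extend continuously to the closed simplex, and that their coincidence loci are unions of faces. Without this, the fiberwise interpolation in Step 3 fails to be a homeomorphism at the boundary. My first attempt would be the good-direction lemma: the set of directions in which some lower-dimensional cell fails to be a graph is definable of dimension $<n-1$, so a generic direction works. I would then add the coincidence loci $\{f_j = f_{j+1}\}$ and the non-extension sets to the distinguished subsets in the inductive call, which makes them unions of faces.
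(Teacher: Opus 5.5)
The paper does not prove this statement. It is quoted from van den Dries, and your outline (reduce to a compact set, choose a good direction, cell-decompose, induct on $n$, fill in prisms over base simplices) is the route taken there. The gap is at exactly the step you flag, and your proposed fix does not close it.

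\textbf{The gap in Step~2.} You assert that after a generic linear change of coordinates the boundary functions over each base cell extend continuously to the closure of that cell. The good-direction lemma gives nothing of the kind: it only says that a definable set of dimension $<n$ has finite fibres in almost every direction.

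Two side remarks on Step~2. As you phrase it, ``every lower-dimensional cell becomes a graph'' is even false: the open upper half circle, a $1$-cell in $\bR^2$, is a graph over its projection only in the vertical direction. Also, the direction must be chosen before the cell decomposition, since a linear change of coordinates destroys cells.

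Finite fibres do not give continuous extension to the closure of a cell, because cells need not be locally connected at their frontier points. For example, let $D\subset\bR^3$ be the $2$-cell $\{(x,y,\cos 4\theta)\}$, with $(x,y)$ in the open positive quarter disc and $\theta=\arg(x+iy)$. Near $(0,0,z)$ with $-1<z<1$, $D$ consists of two branches, $\theta\approx\theta_1$ and $\theta\approx\pi/2-\theta_1$. Take the continuous function $y^2/(x^2+y^2)=\sin^2\theta$ on $D$. Its graph has compact closure with at most two points in each vertical fibre. Yet it has the two distinct limits $\sin^2\theta_1$ and $\cos^2\theta_1$ at $(0,0,z)$, so it has no continuous extension to $\overline{D}$.

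Your remedy does not help either. Declaring the non-extension loci and the coincidence loci to be distinguished subsets only makes them unions of faces; it does not make $f_j\circ h^{-1}$ continuous up to those faces.

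\textbf{The missing idea.} Continuity is needed only on the closed simplices of the base triangulation, not on closures of base cells, and there it is automatic.
\begin{enumerate}
\item First choose a direction that is good for the closed set $B:=\bigcup_i(\overline{A_i}\setminus\mathrm{int}\,A_i)$, with $A$ counted among the $A_i$. This set has dimension $<n$. Every vertical line, including those over frontier points of base cells, then meets $B$ in finitely many points.
\item Next take a cell decomposition adapted to $B$ and the $A_i$. Over each base cell, $B$ is a union of strictly ordered continuous sheets, and each band between consecutive sheets lies in $\mathrm{int}\,A_i$ or outside $\overline{A_i}$.
\item By induction, triangulate the compact base $\pi(A)$ by some $h\colon\pi(A)\to|K|$, compatibly with the projected cells.
\end{enumerate}
Now let $\sigma$ be an open simplex and $y\in\overline{\sigma}$.
\begin{itemize}
\item For every small ball $U$ about $y$, the set $\sigma\cap U$ is convex. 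Hence the set of limit values of $f_j\circ h^{-1}$ at $y$ is a nested intersection of compact intervals, and so is connected.
\item Since $B$ is closed and $h^{-1}$ is continuous on the compact polyhedron $|K|\supset\overline{\sigma}$, this set lies in the finite fibre of $B$ over $h^{-1}(y)$. So it is a single point.
\end{itemize}
Write $\bar f_j$ for the resulting extension to $\overline{\sigma}$. By connectedness, the restriction of $\bar f_j$ to a face $\tau$ equals one of the sheets over $\tau$. Hence the coincidence loci are automatically unions of faces, and the pieces glue across faces.

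\textbf{A further point for the prism step.} Before building prisms you need a barycentric subdivision, so that a simplex all of whose vertices lie in $\{\bar f_j=\bar f_{j+1}\}$ is contained in that set. Otherwise the affine interpolations of $\bar f_j$ and $\bar f_{j+1}$ can coincide over a face where the sheets differ, and the fibrewise interpolation map is not injective.
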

Here, $K$ is a simplicial complex in the usual sense. In particular, it follows that $A$ has finitely generated homology.

It is a deep result of Wilkie \cite{Wilkie} that there exists an $o$-minimal structure $\bR_{\exp}$ generated by the exponential function.
It is the smallest $o$-minimal structure such that $\cS_2$ contains the graph of the exponential function. In particular, any functions $f: \bR^n \to \bR$ of the form \eqref{eq:log-poly} are definable with respect to $\bR_{\exp}$.

\begin{proof}[Proof of \cref{prop:finite_cohomology}]
Recall that the fixed point set $\mathrm{Fix}_\cV(\phi) \subset \cV_\bs(\pos)$ viewed in any cluster chart is a real semialgebraic set (\cref{rem:Hironaka} (2)). In particular, it is definable in any structure by (S1). When we view it in the log-cluster chart, it gives an $\bR_{\exp}$-definable subset of $\bR^n$. In order to obtain the one-point compactification, we embed $\iota: \bR^n \hookrightarrow S^n$ via the inverse of stereographic projection. Since $\iota$ is written by algebraic functions, the image of the fixed point set still gives a definable subset $F \subset S^n$. Finally, we obtain $\mathrm{Fix}_\cV(\lc{\overline{\phi}}{\bi}) \subset S^n$ as the union $F \cup \{\infty\}$, which is definable by (S2). Hence, $\mathrm{Fix}_\cV(\lc{\overline{\phi}}{\bi}) \subset S^n$ is homeomorphic to a polyhedron by \cref{thm:triangulation}, which has finitely generated homology. 
\end{proof}
Thus the proof of \cref{thm:fixed_point_Ish} is completed.

\end{document}